\numberwithin{equation}{section}
\newtheorem{Prop}[equation]{Proposition}
\newtheorem{Lem}[equation]{Lemma}
\newtheorem{Def}[equation]{Definition}
\newtheorem{Thm}[equation] {Theorem}
\newtheorem{Cor}[equation]{Corollary}
\newtheorem{Assumptions}[equation]{Assumptions}
\title
   [Schwartz Space of $\Gamma\backslash G$]
   {Some Results on the Schwartz Space of $\Gamma\backslash G$}
\author{ Goran Mui\' c}
\address{ Department of Mathematics,
University of Zagreb,
Bijeni\v cka 30, 10000 Zagreb,
Croatia}
\email{gmuic@math.hr}
\subjclass{11E70, 22E50}
\keywords{}
\thanks{The  author acknowledges Croatian Science Foundation grant no. 9364.}
\begin{document}
\dedicatory{to Sibe Marde\v si\' c, in memoriam}

\maketitle
\begin{abstract}
  Let $G$ be  a connected semisimple Lie group with finite center. Let $\Gamma \subset G$ be a discrete subgroup. We study closed admissible
  irreducible subrepresentations of the space of distributions $\mathcal S(\Gamma \backslash G)'$ defined by Casselman \cite{casselman-1}, and their
  relations to automorphic forms. 
\end{abstract}

\section{Introduction}
Let $G$ be  a connected semisimple Lie group with finite center. Let $K$ be the maximal compact subgroup of $G$, and
$\cal Z(\mathfrak g_{\mathbb C})$ the center of the universal enveloping algebra of the complexification of
the Lie algebra $\mathfrak g$ of $G$. Let $\Gamma \subset G$ be a discrete subgroup.
For example, it could be  a trivial group. But the main example is given by the following

\begin{Assumptions}\label{int-paf-00}
We assume that  $G$ is a group of $\mathbb R$--points of a
semisimple algebraic group $\mathcal G$ defined over $\mathbb Q$. Assume that $G$ is not
compact and connected.
Let $\Gamma\subset G$ be congruence subgroup with respect to the arithmetic structure
given by the
fact that $\mathcal G$ defined over $\mathbb Q$ (see \cite{BJ}).
\end{Assumptions}

In \cite{casselman-1}, Casselman has defined the Schwartz space
$\mathcal S(\Gamma\backslash G)$ (see Section \ref{src} for definition). It is obvious that $G$ acts on the right. The corresponding
representation is a smooth representation of moderate growth (\cite{casselman}, \cite{W2}). The main object of the interest is the strong topological dual
space $\mathcal S(\Gamma\backslash G)'$. This is the space of all continuous linear functionals on  $\mathcal S(\Gamma\backslash G)$ equipped with
the strong topology. By general theory of topological vector spaces, the space $\mathcal S\left(\Gamma\backslash G\right)'$ is a  complete
locally convex   vector space.
The natural action of $G$ on
$\mathcal S\left(\Gamma\backslash G\right)'$ is continuous. The usual representation--theoretic arguments are valid
there (\cite{hc}, Section 2).

The main interest in the space $\mathcal S(\Gamma\backslash G)'$ is that its Garding space can be identified with the space of functions of uniform
moderate growth  $\mathcal A_{umg}(\Gamma \backslash G)$ (see (UMG-1) and (UMG-2) in Section \ref{src} for the definition).
Under Assumption \ref{int-paf-00}, $\cal Z(\mathfrak g_{\mathbb C})$--finite $\mathcal A_{umg}(\Gamma \backslash G)$  are smooth automorphic forms on $G$ for
$\Gamma$.  Also, $\cal Z(\mathfrak g_{\mathbb C})$--finite and $K$--finite on the right in $\mathcal A_{umg}(\Gamma \backslash G)$
are equal to the  space usual space $\mathcal A(\Gamma\backslash G)$ of $K$--finite automorphic forms for $\Gamma$ \cite{BJ}.

Now, we describe the content of the paper and main results proved in the paper. In Section \ref{paf}, under Assumption \ref{int-paf-00}, we recall the
notion of smooth and $K$--finite automorphic forms. In Section \ref{src}, we describe the results of Casselman \cite{casselman-1} used in the paper.
In Section \ref{srs} we prove some main results in the paper. This section is strongly motivated by a lecture of Wallach \cite{W3}. Some of the results here
are probably well--known, and we present our way of understanding them. We let $(\pi, \mathcal H)$ be an irreducible admissible representation of $G$
acting on the Hilbert space $\mathcal H$. The space of $\mathcal H^\infty$ vectors in $\mathcal H$ is a representation of moderate growth.
The main results of  Section \ref{srs} gives the description of closed irreducible admissible  subrepresentations of $\mathcal S(\Gamma\backslash G)'$
in terms of
continuous $\Gamma$--invariant functionals on $\mathcal H^\infty$ (see Proposition \ref{srs-0}, Theorem \ref{srs-29}). The proofs use deep results of
Casselman and Wallach (\cite{casselman}, \cite{W2}) on smooth globalization of representations at the critical points.
Examples of subrepresentation can explicitly be constructed using Eisenstein series \cite{langlands},  or be shown to exists using
Poincar\' e series (\cite{MuicMathAnn}, \cite{MuicComp}, \cite{MuicJFA}, \cite{MuicSMOOTH}), or the trace formula (\cite{arthur}, \cite{arthur-1}). In Theorem \ref{srs-30}, we prove that the trivial
representation is the only finite--dimensional subrepresentation of $\mathcal S(\Gamma\backslash G)'$   under Assumption \ref{int-paf-00} and assuming that
$G$ has no compact components. In Section \ref{srs-nas}, we study realization inside $\mathcal S(\Gamma\backslash G)'$ of 
irreducible subrepresentations $\mathcal H$ of $L^2(\Gamma \backslash G)$ (see Theorem \ref{srs-21}). In this case,
$\mathcal H^\infty \subset \mathcal A_{umg}(\Gamma\backslash G)$.  The proof of Theorem \ref{srs-21} contains the proof
of the fact that smooth cuspidal automorphic forms are rapidly decreasing. This is proved using methods of Casselman and Wallach. Different proof is
contained in \cite{ms1}. In Theorem \ref{srs-23}, we relate various topologies on $\mathcal H^\infty$ for an irreducible subspace
$\mathcal H\subset L^2(\Gamma \backslash G)$. For example, we prove that  if the sequence of elements
  in $\mathcal H^\infty$, $(\varphi_n)_{n\ge 1}$, converges to $\varphi\in\mathcal H^\infty$ in the standard topology on $\mathcal H^\infty$, then it 
  converges to $\varphi$ in  usual topology on  $C^\infty(G)$ (see the description before the statement of Theorem \ref{srs-32}. In Section \ref{oii}, we study $\Gamma$--invariants in $\mathcal S'(G)$ and their relation to the space $\left(\mathcal S(G)'\right)^\Gamma$ (see Proposition \ref{srs-12}).
  In Proposition \ref{srs-11} we give the interpretaion of the classical construction of automorphic via Poincar\' e series
  (see for example \cite{MuicSMOOTH}) in terms of $\Gamma$--invariants in $\mathcal S'(G)$.

\section{Preliminaries}\label{paf}

In this section we assume that  $G$ is a connected semisimple Lie group
with finite center, and  recall the notion of the norm on $G$.
It is essential for all what follows. 

\medskip

We fix a minimal parabolic subgroup $P=MAN$ of $G$ in
the usual way (see \cite{W1}, Section 2).
We have the Iwasawa decomposition $G=NAK$.

\medskip

We recall the notion of a  norm on the group following \cite{W1}, 2.A.2.
A norm $|| \ ||$ is a continuous function $G\longrightarrow [1, \infty[$ satisfying
    the following properties:
    \begin{itemize}
    \item[(1)] $||x^{-1}||=||x||$, for all $x\in G$;
    \item[(2)]$||x\cdot y||\le ||x||\cdot || y||$, for all $x, y\in G$;
    \item[(3)] the sets $\left\{x\in G; \ \ ||x||\le r \right\}$ are compact  for all $r\ge 1$;
    \item[(4)] $||k_1\exp{(tX)} k_2||=||\exp{(X)}||^t$, for all $k_1, k_2\in K,
      X\in \mathfrak p, \ \ t\ge 0$.
    \end{itemize}
    Any two norms $||\ ||_i$, $i=1,2$, are equivalent: there exist $C, r>0$ such that
    $||x||_1\le C ||x||^r_2$, for all $x\in G$.

    \medskip
    We recall the following lemma:

    \begin{Lem}\label{paf-0} There exists a real number $d_0>0$ such that
      $\int_G ||g||^{-d} dg<\infty$  for $d\ge d_0$. Since $||g|\ge 1$ for all $g\in G$, the lemma
      follows.
    \end{Lem}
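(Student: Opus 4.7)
The statement has two parts: (i) the integral converges for $d$ sufficiently large, and (ii) once it converges for some $d_0$, it converges for every $d\ge d_0$. Part (ii) is the trivial observation that the author already indicates: since $||g||\ge 1$, one has $||g||^{-d}\le ||g||^{-d_0}$ pointwise whenever $d\ge d_0$, so the monotone/dominated convergence argument is immediate. The real content is (i), and my plan is to reduce it to a Cartan-decomposition estimate on $A$.

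First I would invoke the Cartan decomposition $G=K\overline{A^+}K$, write a generic element as $g=k_1\exp(X)k_2$ with $X\in\overline{\mathfrak{a}^+}$, and use the integration formula
\[
\int_G f(g)\,dg \;=\; \int_K\!\int_{\overline{\mathfrak{a}^+}}\!\int_K f\bigl(k_1\exp(X)k_2\bigr)\,J(X)\,dk_2\,dX\,dk_1,
\]
where $J(X)=\prod_{\alpha\in\Sigma^+}\bigl(\sinh\alpha(X)\bigr)^{m_\alpha}$. Property~(4) of the norm says $||k_1\exp(X)k_2||=||\exp(X)||$, so the integrand is bi-$K$-invariant and the $K$-integrations just contribute the finite constant $\mathrm{vol}(K)^2$. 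Everything therefore reduces to showing
\[
\int_{\overline{\mathfrak{a}^+}} ||\exp(X)||^{-d}\, J(X)\,dX < \infty
\]
for all sufficiently large $d$.

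Next I would estimate the two factors. The Jacobian $J(X)$ is dominated by $C\,e^{2\rho(X)}$ on $\overline{\mathfrak{a}^+}$, where $\rho=\tfrac12\sum_{\alpha\in\Sigma^+} m_\alpha\alpha$. For $||\exp(X)||$, properties~(2) and~(4) give the sub-multiplicativity $||\exp(tX)||\le ||\exp(X)||^t$ together with $||\exp(X)||\ge 1$, and since any two norms are equivalent it suffices to prove the estimate for one convenient norm. Choosing the standard operator norm coming from a faithful representation $G\hookrightarrow \GL{n}(\mathbb R)$, one has $||\exp(X)||\ge c_1\, e^{\lambda(X)}$ on $\overline{\mathfrak{a}^+}$ for some $\lambda\in\mathfrak{a}^*$ that is strictly positive on the open chamber (the highest weight of the representation). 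Combining these gives
\[
||\exp(X)||^{-d}\,J(X) \;\le\; C'\, e^{(2\rho - d\lambda)(X)}
\]
on $\overline{\mathfrak{a}^+}$.

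The final step is to pick $d_0$ large enough that $d_0\lambda - 2\rho$ is strictly positive on $\overline{\mathfrak{a}^+}\setminus\{0\}$; then for every $d\ge d_0$ the exponent $(2\rho-d\lambda)(X)$ tends to $-\infty$ linearly along every ray in the chamber, and a standard integration over $\overline{\mathfrak{a}^+}\subset\mathfrak{a}\simeq\mathbb R^r$ in linear coordinates dual to the simple roots converts the integral into a product of one-dimensional integrals of exponentials, each finite. The main (and essentially only) obstacle is producing the lower bound $||\exp(X)||\gtrsim e^{\lambda(X)}$ with $\lambda$ large enough to beat $2\rho$, but since the norm is only defined up to equivalence and we may enlarge $d_0$ freely, the precise $\lambda$ is irrelevant: any strictly chamber-positive $\lambda$ suffices, and then $d_0:=\lceil 2\rho/\lambda\rceil+1$ (in the obvious coordinatewise sense) does the job.
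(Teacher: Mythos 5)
Your proposal is correct in substance, but it takes a different route from the paper only in the sense that the paper does not prove anything here at all: the author simply quotes the result from Wallach, \emph{Real reductive groups I}, Lemma 2.A.2.4, and observes (inside the statement itself) that convergence for one exponent $d_0$ propagates to all $d\ge d_0$ because $\|g\|\ge 1$. What you have written out is essentially the proof of that cited lemma: Cartan decomposition $G=K\overline{A^+}K$, bi-$K$-invariance of $\|\cdot\|$ from property (4), the Jacobian bound $J(X)\le C e^{2\rho(X)}$, and a lower bound $\|\exp X\|\ge c\,e^{\lambda(X)}$ beating $2\rho$ for $d$ large. So your argument buys self-containedness at the cost of redoing Wallach's work; the paper buys brevity by citation. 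One point you should tighten: requiring $\lambda$ to be ``strictly positive on the open chamber'' is not by itself enough to conclude that $d\lambda-2\rho$ is positive on all of $\overline{\mathfrak a^+}\setminus\{0\}$, since a priori $\lambda$ could degenerate on a chamber wall where $\rho$ does not. What you actually need is a uniform linear lower bound $\lambda(X)\ge\epsilon\,|X|$ on the closed chamber, i.e.\ strict positivity on $\overline{\mathfrak a^+}\setminus\{0\}$ combined with compactness of the unit sphere there; this does hold for the highest restricted weight of a representation with finite kernel (for an irreducible restricted root system every nonzero dominant weight is a strictly positive combination of the simple roots, because the inverse Cartan matrix has positive entries, and for a product of simple factors one must take a representation faithful on each factor). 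With that fix, choosing $d_0>2\sup_{|X|=1}\rho(X)/\epsilon$ closes the argument, and part (ii) is the trivial monotonicity you state.
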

    \begin{proof} The existence of $d_0>0$ such that  $\int_G ||g||^{-d_0} dg<\infty$
is proved in (\cite{W1}, Lemma 2.A.2.4). 
      \end{proof}

\vskip .2in 

In the remainder of this section, we assume the following:

\begin{Assumptions}\label{paf-00}
We assume that  $G$ is a group of $\mathbb R$--points of a
semisimple algebraic group $\mathcal G$ defined over $\mathbb Q$. Assume that $G$ is not
compact and connected.
Let $\Gamma\subset G$ be congruence subgroup with respect to the arithmetic structure
given by the
fact that $\mathcal G$ defined over $\mathbb Q$ (see \cite{BJ}).
\end{Assumptions}
The group satisfying the Assumption \ref{paf-00} is a connected semisimple Lie group
with finite center. Also,  $\Gamma$ is a discrete subgroup
of $G$ and it has a finite covolume.

\medskip
An automorphic form (or a $K$--finite automorphic form; see \cite{cogdell}) for $\Gamma$
is a function $f\in C^\infty(G)$ satisfying the following
three conditions (\cite{W3} or \cite{BJ}):

\begin{itemize}
\item[(A-1)] $f$ is  $\cal Z(\mathfrak g_{\mathbb C})$--finite and $K$--finite on the right;
\item[(A-2)] $f$ is left--invariant under $\Gamma$ i.e., $f(\gamma x)=f(x)$ for all 
  $\gamma\in \Gamma$, $x\in G$;
\item[(A-3)] there exists $r\in\mathbb R$, $r>0$
  such that for each $u\in \mathcal U(\mathfrak g_{\mathbb C})$ there exists a constant
  $C_u>0$ such that $\left|u.f(x)\right|\le C_u \cdot ||x||^r$, for all $x\in G$.
\end{itemize}
A smooth automorphic form (see \cite{casselman-1}, \cite{cogdell}) for $\Gamma$
is a function $f\in C^\infty(G)$ satisfying (A1)--(A3) except possibly $K$--finiteness.
We discuss smooth automorphic forms in more detail the next section.

\medskip
We write $\mathcal A(\Gamma\backslash G)$ (resp., $\mathcal A^\infty(\Gamma\backslash G)$)
for the vector space of all automorphic forms (resp., smooth automorphic forms). Obviously, 
$\mathcal A(\Gamma\backslash G) \subset \mathcal A^\infty(\Gamma\backslash G)$. 
It is easy to see that $\mathcal A(\Gamma\backslash G)$ is a $(\mathfrak g, K)$--module (using \cite{hc},
Theorem 1),
and since $G$ is connected, the space $\mathcal A^\infty(\Gamma\backslash G)$
is $G$--invariant . An automorphic form
$f\in \mathcal A^\infty(\Gamma\backslash G)$  is a $\Gamma$--cuspidal automorphic form if for every proper
$\mathbb Q$--proper parabolic subgroup $\mathcal P\subset \mathcal G$ we have
$$
\int_{U\cap \Gamma \backslash U} f(ux)dx=0, \ \ x\in G,
$$
where $U$ is the group of $\mathbb R$--points of the unipotent radical of  $\mathcal P$.
We remark that the quotient $U\cap \Gamma \backslash U$ is compact. We use normalized $U$--invariant measure on
$U\cap \Gamma \backslash U$. The space of all $\Gamma$--cuspidal automorphic forms (resp.,
$\Gamma$--cuspidal smooth automorphic forms) for $\Gamma$
is denoted by $\mathcal A_{cusp}(\Gamma\backslash G)$ (resp., $\mathcal A^\infty_{cusp}(\Gamma\backslash G)$).
The space $\mathcal A_{cusp}(\Gamma\backslash G)$ is a   $(\mathfrak g, K)$--submodule of $\mathcal A(\Gamma\backslash G)$.
The space $\mathcal A^\infty_{cusp}(\Gamma\backslash G)$ is $G$--invariant.

\vskip .2in 
Following Casselman \cite{casselman-1}, we define
$$
||g||_{\Gamma\setminus G}=\inf_{\gamma\in \Gamma} ||\gamma g||, \ \ g \in G.
$$
It is obvious that $||\cdot ||_{\Gamma\setminus G}$ is $\Gamma$--invariant on the right, and that
$||g||_{\Gamma\setminus G}\le ||g||$ for all $g\in G$. The condition (A-3) is equivalent to

\medskip
\begin{itemize}
\item[(A-3')] there exists $r\in\mathbb R$, $r>0$
  such that for each $u\in \mathcal U(\mathfrak g_{\mathbb C})$ there exists a constant
  $C_u>0$ such that $\left|u.f(x)\right|\le C_u \cdot ||x||^r_{\Gamma\setminus G}$, for all $x\in G$.
\end{itemize}

\vskip .2in
We recall the following standard result:

\begin{Lem}\label{paf-1} Under above assumptions, we have the following:
  \begin{itemize}
         \item[(a)] If  $f\in C^\infty(G)$ satisfies (A-1), (A-2), and
           there exists  $p\ge 1$ such that $f\in L^p(\Gamma\backslash G)$, then f satisfies (A-3), and it is
           therefore an
           automorphic form. We speak about $p$--integrable automorphic form, for $p=1$ (resp., $p=2$) we speak
           about integrable
        (resp., square--integrable) automorphic form.
      \item[(b)] Let $p\ge 1$. Every $p$--integrable automorphic form is integrable.
      \item[(c)] Bounded integrable automorphic form is square--integrable.
      \item[(d)] If $f$ is square integrable automorphic form, then the minimal $G$--invariant closed subspace of
        $L^2(\Gamma\backslash G)$ is a direct is of finitely many irreducible unitary representations.
       \item[(e)] Every $\Gamma$--cuspidal automorphic form is square--integrable.
    \end{itemize}
  \end{Lem}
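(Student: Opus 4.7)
The five claims have different depths; I would tackle the routine parts (b) and (c) first as direct corollaries of finite covolume, then build up to (a), (d), and finally (e), which is the main obstacle.

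Parts (b) and (c) are immediate. Since $\Gamma\backslash G$ has finite invariant volume, H\"older's inequality gives
\[
\int_{\Gamma\backslash G} |f|\,dg \;\le\; \mathrm{vol}(\Gamma\backslash G)^{1-1/p}\Bigl(\int_{\Gamma\backslash G}|f|^p\,dg\Bigr)^{1/p},
\]
so $p$-integrability forces integrability. For (c), if $|f|\le M$ everywhere and $f\in L^1$, then $|f|^2\le M|f|\in L^1$.

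For (a), the plan is to bootstrap the $L^p$-hypothesis into pointwise polynomial growth via elliptic regularity combined with a count of $\Gamma$-translates. By Harish-Chandra's admissibility theorem, the $(\mathfrak g,K)$-module $\calU(\mathfrak g_{\mathbb C})\cdot K\cdot f$ is admissible and of finite length, so for every $u\in\calU(\mathfrak g_{\mathbb C})$, the function $u.f$ is again $K$- and $\calZ(\mathfrak g_{\mathbb C})$-finite and satisfies an elliptic equation built from a Casimir element in the cofinite annihilating ideal. Elliptic regularity then yields a Sobolev-type local bound
\[
|u.f(x)| \;\le\; C_u\,\|f\|_{L^p(xB)}
\]
on a fixed compact neighborhood $B$ of the identity. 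Writing $D$ for a fundamental domain of $\Gamma\backslash G$ and using $\Gamma$-invariance,
\[
\|f\|^p_{L^p(xB)} \;\le\; \#\{\gamma\in\Gamma : \gamma D\cap xB\ne\emptyset\}\cdot\|f\|^p_{L^p(\Gamma\backslash G)},
\]
and the counting number is bounded by $C\|x\|^N$ for some $N$: by Lemma \ref{paf-0} the volume of $\{g:\|g\|\le T\}$ grows at most polynomially in $T$, and any fixed neighborhood meets only boundedly many $\Gamma$-orbits by discreteness. After a $\Gamma$-translation one may replace $\|x\|$ by $\|x\|_{\Gamma\backslash G}$, yielding (A-3'), which is equivalent to (A-3).

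For (d) I would again invoke Harish-Chandra: the $(\mathfrak g,K)$-module $V_0$ generated by $f$ is admissible, $\calZ$-finite, and finitely generated, hence of finite length. Its Hilbert-space closure $V\subset L^2(\Gamma\backslash G)$ is a unitary representation whose $K$-finite vectors coincide with $V_0$, so being admissible and unitary it splits as a finite orthogonal direct sum of irreducible closed subrepresentations. The main obstacle is part (e), which will require the classical constant-term argument. My plan is to show that $\Gamma$-cuspidal automorphic forms are rapidly decreasing on Siegel sets, after which the finite covolume of $\Gamma\backslash G$ immediately gives $f\in L^p$ for every $p\ge 1$, and in particular $f\in L^2$. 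By reduction theory, $\Gamma\backslash G$ is covered by finitely many Siegel sets attached to $\mathbb Q$-parabolics. On each such set, the moderate growth from (A-3) combined with the vanishing of the integral of $f$ over $U\cap\Gamma\backslash U$ allows repeated integration by parts against non-trivial unitary characters of $U$, extracting arbitrarily many factors $|\alpha(a)|^{-1}$ in the simple roots $\alpha$; iterating over an adapted collection of $\mathbb Q$-parabolics controls all root directions and yields the rapid decay. The careful book-keeping in this step is the hardest part; it is classical (Harish-Chandra; see also \cite{BJ}).
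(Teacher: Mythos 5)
Your proposal is correct in outline, but it takes a very different route from the paper simply because the paper does almost no work here: for (a) and (e) it refers to \cite{BJ} and the references there, (b) is the same H\"older argument you give, (c) is declared obvious, and (d) is cited from (\cite{W1}, Corollary 3.4.7 and Theorem 4.2.1). What you have written out for (a), (d) and (e) is essentially a reconstruction of the classical proofs hiding behind those citations: the elliptic-regularity-plus-lattice-counting argument of Harish--Chandra and Borel for (a), the finiteness theorem for finitely generated $\cal Z(\mathfrak g_{\mathbb C})$--finite admissible $(\mathfrak g,K)$--modules for (d), and the constant-term/rapid-decay analysis on Siegel sets for (e). The author also points out, right after the lemma, an alternative proof of (a) via Casselman's Schwartz space formalism (\cite{MuicSMOOTH}, Proposition 4.7), which is a genuinely different and in some ways softer route than your Sobolev argument. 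So nothing in your plan is wrong-headed; it is just far more self-contained than what the paper actually does.

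One step in your sketch of (a) is loose enough to deserve repair if you intend to carry it out. A fundamental domain $D$ for $\Gamma\backslash G$ is in general not compact (it is built from Siegel sets), so the quantity $\#\{\gamma\in\Gamma:\gamma D\cap xB\ne\emptyset\}$ is not obviously finite, let alone polynomially bounded, and "any fixed neighborhood meets only boundedly many $\Gamma$--orbits" is not the statement you need. The standard formulation unfolds the integral as $\int_{xB}|f|^p\,dg=\int_{\Gamma\backslash G}n(\dot g)\,|f(\dot g)|^p\,d\dot g$ with $n(\dot g)=\#(\Gamma g\cap xB)$, chooses the representative $g$ inside $xB$ so that $\|g\|\le \|x\|\sup_{b\in B}\|b\|$, and bounds $\#(\Gamma\cap xBg^{-1})$ by packing pairwise disjoint left translates $\gamma V$ of a small neighborhood $V$ into a norm ball of radius polynomial in $\|x\|$, whose volume is polynomially bounded by Lemma \ref{paf-0}. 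With that correction the argument closes; the passage from $\|x\|$ to $\|x\|_{\Gamma\backslash G}$ at the end is fine as you state it.
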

\begin{proof} For the claims (a) and (e) we refer to \cite{BJ} and reference there.
  Since the volume of $\Gamma \backslash G$ is finite, the claim (b) follows from H\" older inequality (as in
  \cite{MuicMathAnn}, Section 3). The claim (c) is obvious. The claim (d) follows from (\cite{W1}, Corollary
  3.4.7 and Theorem  4.2.1).
\end{proof}

\medskip
In (\cite{MuicSMOOTH}, Proposition 4.7) we  give a simple proof of Lemma \ref{paf-1} (a) using 
results of Casselman \cite{casselman-1} recalled in the next section.

\section{Some Results of Casselman}\label{src}

In this section we  assume that $G$ is a semisimple connected Lie group with finite center.
We assume that $\Gamma$ is a discrete subgroup of $G$. For example, $\Gamma$ could be a congruence subgroup or just
a trivial group.
\medskip
We recall the definition of the Schwartz space
$\mathcal S\left(\Gamma\backslash G\right)$ defined by Casselman 
(\cite{casselman-1}, page 292). It consists of all functions $f\in C^\infty(G)$ satisfying the following
conditions:
\medskip

\begin{itemize}
\item[(CS-1)] $f$ is left--invariant under $\Gamma$ i.e., $f(\gamma x)=f(x)$ for all
  $\gamma\in \Gamma$, $x\in G$;
\item[(CS-2)] $||f||_{u, -n}<\infty$ for all $u\in \mathcal U(\mathfrak g_{\mathbb C})$, and all natural
  numbers $n\ge 1$.
\end{itemize}

\medskip
In above definition, for  $u\in \mathcal U(\mathfrak g_{\mathbb C})$, and a real number $s$, we let
$$
||f||_{u, s}\overset{def}{=} \sup_{x\in G}  ||x||_{\Gamma\setminus G}^{-s} \left|u.f(x)\right|.
$$
Since $||x||_{\Gamma\setminus G} \ge 1$, we have
$$
||f||_{u, s'}\le ||f||_{u, s},
$$
for $s'>s$.

\vskip .2in 
We recall the following result (see \cite{casselman-1}, 1.8 Proposition):

\begin{Prop}\label{src-1} Using above notation, we have the following:
  \begin{itemize}
  \item[(i)]  The Schwartz space $\mathcal S\left(\Gamma\backslash G\right)$ is a Fr\' echet space
    under the seminorms:
    $||\ ||_{u, -n}$, $u\in \mathcal U(\mathfrak g_{\mathbb C})$, $n\in\mathbb Z_{\ge 1}$.
  \item[(ii)] The right regular representation of $G$ on $\mathcal S\left(\Gamma\backslash G\right)$ is a smooth  Fr\' echet
    representation of moderate growth. 
    \end{itemize}
\end{Prop}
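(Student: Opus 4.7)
For (i), I would first reduce to a countable family of seminorms by fixing a Poincar\'e--Birkhoff--Witt basis $\{u_i\}_{i\in\mathbb Z_{\ge 1}}$ of $\mathcal U(\mathfrak g_{\mathbb C})$; since any $u$ is a finite $\mathbb C$-linear combination of basis elements, the seminorm $||\cdot||_{u,-n}$ is dominated by a finite sum of the $||\cdot||_{u_i,-n}$, so the topology is metrizable. Hausdorffness follows at once from $u=1$, $n=1$: if $||f||_{1,-1}=0$ then $|f(x)| \le ||x||_{\Gamma\backslash G}^{-1} \cdot 0=0$ for all $x$. For completeness, I would start with a Cauchy sequence $(f_m)$ and note that for each $u$ the sequence $(u.f_m)$ is uniformly Cauchy on every compact subset of $G$, because on compacta the weight $||x||_{\Gamma\backslash G}^n$ is bounded below. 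Standard arguments (iterated use of the fundamental theorem of calculus along one-parameter subgroups, in charts) then show that the pointwise limit $f$ lies in $C^\infty(G)$, is $\Gamma$-invariant, and satisfies $u.f = \lim_m u.f_m$ pointwise. Passing to the limit in the bound $||x||_{\Gamma\backslash G}^n |u.f_m(x)-u.f_{m'}(x)|<\epsilon$ as $m'\to\infty$ yields $f\in\mathcal S(\Gamma\backslash G)$ and $f_m\to f$ in every seminorm.

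For (ii), I would simultaneously prove $G$-invariance and moderate growth by using the identity $u\cdot R(g) = R(g)\cdot\mathrm{Ad}(g^{-1})u$ in $\mathcal U(\mathfrak g_{\mathbb C})$, which gives $u.(R(g)f)(x)=(\mathrm{Ad}(g^{-1})u\,.\,f)(xg)$. After the change of variable $y=xg$ and the submultiplicativity estimate $||yg^{-1}||_{\Gamma\backslash G}\le ||y||_{\Gamma\backslash G}\cdot||g||$ (using properties (1) and (2) of the norm), one obtains
$$
||R(g)f||_{u,-n}\le ||g||^n\cdot||f||_{\mathrm{Ad}(g^{-1})u,\,-n}.
$$
Expanding $\mathrm{Ad}(g^{-1})u$ in the PBW basis of the finite-dimensional subspace $\mathcal U^{\le d}(\mathfrak g_{\mathbb C})$, where $d=\deg u$, the coefficients are polynomial in the matrix entries of $\mathrm{Ad}(g^{-1})$, hence bounded by $C\cdot||g||^D$ for constants $C,D$ depending only on $u$. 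Combining, one gets the moderate growth bound
$$
||R(g)f||_{u,-n}\le C\cdot||g||^{n+D}\cdot\max_i||f||_{u_i,-n},
$$
which shows both that $R(g)f\in\mathcal S(\Gamma\backslash G)$ and that the representation is of moderate growth.

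Finally, for the smoothness assertion, continuity of the action map $G\times\mathcal S(\Gamma\backslash G)\to\mathcal S(\Gamma\backslash G)$ at $(e,f)$ reduces, by the above moderate growth bound, to showing $||R(g)f-f||_{u,-n}\to 0$ as $g\to e$; this follows from an $\epsilon/2$ split into a large compactum (where $u.f$ is uniformly continuous) and its complement (where the rapid decay forces the weighted sup to be arbitrarily small). Smoothness of $g\mapsto R(g)f$ then follows by induction on the order of derivative: $\lim_{t\to 0} t^{-1}(R(\exp tX)f-f)=Xf$ holds in each seminorm because the Taylor remainder can be written as an integral involving $X^2 f\in\mathcal S(\Gamma\backslash G)$, to which the moderate growth estimate from the previous paragraph applies uniformly for $|t|\le 1$. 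I expect the main obstacle to be the completeness step in (i): the simultaneous verification that the pointwise limit is smooth, that $u.f$ agrees with $\lim u.f_m$ for each $u$, and that convergence holds in every seminorm requires careful bookkeeping, though it is entirely routine once uniform convergence of derivatives on compacta is in hand.
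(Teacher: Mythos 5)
Your proposal is correct in substance, but it is worth saying up front that the paper does not prove this proposition at all: it is quoted verbatim from Casselman (\cite{casselman-1}, 1.8 Proposition) and the ``proof'' in the text is just that citation. So what you have written is a from-scratch verification of a result the author treats as known. Your argument is the standard one and I see no gap in it: the reduction to a countable family of seminorms via a PBW basis, Hausdorffness from the seminorm $\|\cdot\|_{1,-1}$, and completeness via uniform convergence of all derivatives on compacta are all fine; and for (ii) the identity $u.(R(g)f)(x)=(\mathrm{Ad}(g^{-1})u.f)(xg)$ combined with the bound $\|yg^{-1}\|_{\Gamma\backslash G}\le\|y\|_{\Gamma\backslash G}\|g\|$ and polynomial growth of the matrix coefficients of $\mathrm{Ad}$ on the finite-dimensional space $\mathcal U^{\le d}(\mathfrak g_{\mathbb C})$ is exactly the technique the author himself deploys later, in the proof of Lemma \ref{srs-24}, to show that the spaces $\mathcal A_{umg,n}(\Gamma\backslash G)$ have moderate growth; so your (ii) is in effect the $n\to-\infty$ member of that family of arguments. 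One point deserves a little more care than your sketch gives it: in the $\epsilon/2$ split for continuity at $(e,f)$, the relevant ``large compactum'' is really the sublevel set $\{x:\|x\|_{\Gamma\backslash G}\le r\}$, which is not compact in $G$ for nontrivial $\Gamma$; it is, however, equal to $\Gamma\cdot\{y:\|y\|\le r\}$ (the infimum defining $\|\cdot\|_{\Gamma\backslash G}$ is attained because $\Gamma$ is discrete and norm balls are compact), so you must invoke $\Gamma$--invariance of $u.f$ and $u.R(g)f$ to reduce the uniform-continuity step to the genuinely compact ball $\{\|y\|\le r\}$. Alternatively, writing $\mathrm{Ad}(g^{-1})u=u+v(g)$ with $v(g)\to0$ in $\mathcal U^{\le d}(\mathfrak g_{\mathbb C})$ and estimating $u.f(xg)-u.f(x)$ by a one-variable Taylor integral against an element of $\mathcal U^{\le d+1}(\mathfrak g_{\mathbb C})$ gives continuity with no compactness discussion at all. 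Neither issue is a genuine gap.
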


\medskip
We recall the definition of representation of moderate growth. Let $(\pi, V)$ be a continuous representation
on the Fr\' echet space   $V$. We say that $(\pi, V)$ is of moderate growth if it is smooth and 
if for any continuous semi-norm $\rho$ there exists an integer
$n$, a constant $C>0$, and another continuous semi-norm $\nu$ such that 
$$
||\pi(g)v||_\rho\le C ||g||^n ||v||_\nu, \ \ g\in G, \ v\in V.
$$
We recall that the semi-norms on a locally convex vector space (for example, a Frech\' et space) $V$ are
constructed via Minkowski functionals.

\vskip .2in
The following definition is from (\cite{casselman-1}, page 295).

\begin{Def}\label{src-2} The space $\mathcal S\left(\Gamma\backslash G\right)'$ of tempered distributions or
  distributions of moderate growth  on $\Gamma \backslash G$ is the
  strong topological dual of $\mathcal S\left(\Gamma\backslash G\right)$. 
  \end{Def}

\medskip

For convenience of the reader,  we recall the definition of a strong topological dual in our particular case.
By general theory, the subset  $B\subset \mathcal S\left(\Gamma\backslash G\right)$ is bounded if for every neighborhood $V$ of
$0$ there exists $s>0$ such that $B\subset tV$, for $t>s$. This definition is not very practical to use.
Again from the general theory (and easy to see directly), $B\subset \mathcal S\left(\Gamma\backslash G\right)$
is bounded if and only if it is bounded in every semi-norm defining topology on $\mathcal S\left(\Gamma\backslash G\right)$
i.e., 
$$
\sup_{f\in B} ||f||_{u, -n} <\infty, \ \ u\in \mathcal U(\mathfrak g_{\mathbb C}), \ \ n\in  \mathbb Z_{\ge 1}.
$$

\medskip
The strong topological dual
$\mathcal S\left(\Gamma\backslash G\right)'$  of $\mathcal S\left(\Gamma\backslash G\right)$ is
the space of continuous functionals on $X$ equipped with strong topology i.e. topology of uniform convergence on bounded sets
in $\mathcal S\left(\Gamma\backslash G\right)$ i.e. topology given by semi--norms
$$
||\alpha||_B=\sup_{f\in B} \ \left|\alpha(f)\right|, \ \ \text{where $B$ ranges over bounded sets of
  $\mathcal S\left(\Gamma\backslash G\right)$}.
$$
By general theory of topological vector spaces, the space $\mathcal S\left(\Gamma\backslash G\right)'$ is a complete
locally convex  (defined by above semi-norms) vector space.

 \medskip
 
The natural action of $G$ on
$\mathcal S\left(\Gamma\backslash G\right)'$ is continuous. The usual representation--theoretic arguments are valid
there (\cite{hc}, Section 2).

\vskip .2in
Following Casselman, we consider the two spaces of functions: the functions of moderate growth
$\mathcal A_{mg}(\Gamma \backslash G)$, and
the functions of uniform moderate growth $\mathcal A_{umg}(\Gamma \backslash G)$. The space
$\mathcal A_{mg}(\Gamma \backslash G)$ consists of the functions $f\in C^\infty(G)$ satisfying the following conditions:

\vskip .2in

\begin{itemize}
\item[(MG-1)] $f$ is left--invariant under $\Gamma$ i.e., $f(\gamma x)=f(x)$ for all
  $\gamma\in \Gamma$, $x\in G$;
\item[(MG-2)] for each $u\in \mathcal U(\mathfrak g_{\mathbb C})$ there exists a constant
  $C_u>0$, $r_u\in\mathbb R$, $r_u>0$  such that $\left|u.f(x)\right|\le C_u \cdot ||x||^{r_u}$, for all $x\in G$.
\end{itemize}

\vskip .2in 
The space $\mathcal A_{umg}(\Gamma \backslash G)$ consists
of the functions $f\in C^\infty(G)$ satisfying the following conditions:

\vskip .2in 
\begin{itemize}
\item[(UMG-1)] $f$ is left--invariant under $\Gamma$ i.e., $f(\gamma x)=f(x)$ for all
  $\gamma\in \Gamma$, $x\in G$;
\item[(UMG-2)] there exists $r\in\mathbb R$, $r>0$
  such that for each $u\in \mathcal U(\mathfrak g_{\mathbb C})$ there exists a constant
  $C_u>0$ such that $\left|u.f(x)\right|\le C_u \cdot ||x||^r$, for all $x\in G$.
\end{itemize}
We note that in the second definition $r$ is independent of $u\in \mathcal U(\mathfrak g_{\mathbb C})$.

\medskip
\begin{Lem}\label{src-3} We maintain the assumptions of the first paragraph of Section \ref{paf}.
  Then, the spaces of functions which are $\cal Z(\mathfrak g_{\mathbb C})$--finite and $K$--finite on the right in
  $\mathcal A_{mg}(\Gamma \backslash G)$, and in $\mathcal A_{umg}(\Gamma \backslash G)$ coincide,
  and are equal to the  space $\mathcal A(\Gamma\backslash G)$ of automorphic forms for $\Gamma$. Next,
  the space of smooth automorphic forms $\mathcal A^\infty(\Gamma\backslash G)$ is a subspace of
  $\cal Z(\mathfrak g_{\mathbb C})$--finite functions in $\mathcal A_{umg}(\Gamma \backslash G)$.
  Furthermore, we have
  $$
  \mathcal A(\Gamma\backslash G)\subset \mathcal A^\infty(\Gamma\backslash G)\subset
  \mathcal A_{umg}(\Gamma \backslash G)\subset \mathcal A_{mg}(\Gamma \backslash G).
  $$
  \end{Lem}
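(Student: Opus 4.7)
The plan is to dispatch the definitional claims and reduce the real content of the lemma to a single classical upgrade. The chain $\mathcal A(\Gamma\backslash G)\subset \mathcal A^\infty(\Gamma\backslash G)\subset \mathcal A_{umg}(\Gamma\backslash G)\subset \mathcal A_{mg}(\Gamma\backslash G)$ and the containment $\mathcal A^\infty(\Gamma\backslash G)\subset\{f\in \mathcal A_{umg}(\Gamma\backslash G):\,f\text{ is }\mathcal Z(\mathfrak g_{\mathbb C})\text{-finite}\}$ all follow immediately from the definitions recalled in Section \ref{paf}: $\mathcal A$ adds right-$K$-finiteness on top of $\mathcal A^\infty$; $\mathcal A^\infty$ is $\mathcal Z$-finite and satisfies (A-3), which is exactly (UMG-2); and (UMG-2) is the uniform case of (MG-2). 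Similarly, $\mathcal A(\Gamma\backslash G)$ is by definition precisely the $\mathcal Z$-finite, right-$K$-finite subspace of $\mathcal A_{umg}(\Gamma\backslash G)$, so the claimed equality on the $\mathcal A_{umg}$ side is immediate. Since $\mathcal A_{umg}\subset \mathcal A_{mg}$, the nontrivial content of the lemma is the reverse inclusion on the $\mathcal A_{mg}$ side: every $\mathcal Z(\mathfrak g_{\mathbb C})$-finite, right-$K$-finite $f\in \mathcal A_{mg}(\Gamma\backslash G)$ already lies in $\mathcal A(\Gamma\backslash G)$; equivalently, the exponent $r_u$ appearing in (MG-2) can be chosen independent of $u$.

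The tool I would invoke is the classical Harish-Chandra promotion of moderate growth to uniform moderate growth for $\mathcal Z$-finite, $K$-finite functions: if $f\in C^\infty(G)$ is $\mathcal Z(\mathfrak g_{\mathbb C})$-finite and right-$K$-finite and satisfies $|f(x)|\le C\|x\|^r$, then $|u.f(x)|\le C_u\|x\|^r$ for every $u\in \mathcal U(\mathfrak g_{\mathbb C})$ with the same exponent $r$. Granting this, take $r$ to be the exponent that (MG-2) produces for $u=1$; the upgrade supplies (UMG-2), hence (A-3), and $f\in \mathcal A(\Gamma\backslash G)$ follows.

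To prove the upgrade, I would decompose $f$ into its finitely many $K$-isotypic components under right translation and reduce to a single $K$-type annihilated by a fixed ideal of finite codimension in $\mathcal Z(\mathfrak g_{\mathbb C})$. A suitable combination of the Casimirs $\Omega_{\mathfrak g}$ and $\Omega_{\mathfrak k}$ (for instance $\Delta=\Omega_{\mathfrak g}-2\Omega_{\mathfrak k}$ with appropriate sign convention) acts via right translation as an elliptic differential operator on $G$. Since $\Omega_{\mathfrak g}$ is central and so commutes with $\Omega_{\mathfrak k}$ as right-invariant operators, and since $\Omega_{\mathfrak k}$ acts on our chosen $K$-type by a scalar $\mu$, the $\mathcal Z$-finiteness of $f$ translates into a nonzero polynomial identity $P(\Delta)f=0$. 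Local elliptic regularity, via a right-translated parametrix for $\Delta$, then yields, for each $u\in \mathcal U(\mathfrak g_{\mathbb C})$ and each compact neighborhood $W$ of the identity, an estimate $\sup_{g\in Wx}|u.f(g)|\le C_{u,W}\sup_{g\in W'x}|f(g)|$ for some compact $W'\supset W$, uniform in $x\in G$. Combined with the submultiplicativity $\|gx\|\le\|g\|\cdot\|x\|$ and the boundedness of $\|\cdot\|$ on $W'$, the polynomial bound on $f$ transfers to $u.f$ with the same exponent $r$. The principal obstacle is the construction of the elliptic $\Delta$ together with the parametrix estimate; everything else is bookkeeping with the definitions.
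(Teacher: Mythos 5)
Your reduction of the lemma is exactly right: everything except the inclusion $\{\,\mathcal Z(\mathfrak g_{\mathbb C})\text{-finite, right }K\text{-finite}\,\}\cap\mathcal A_{mg}(\Gamma\backslash G)\subset\mathcal A_{umg}(\Gamma\backslash G)$ is a matter of unwinding (A-1)--(A-3) against (MG-1,2) and (UMG-1,2), and your elliptic-regularity argument for that inclusion is sound. Be aware, though, that the paper does not prove the lemma at all: it simply cites (\cite{MuicSMOOTH}, Lemma~4.4) as ``a simple observation,'' so you are supplying the classical Harish--Chandra argument that underlies that reference rather than reproducing anything in this text. A shorter route, more in the spirit of what this paper uses elsewhere (e.g.\ in the proof of Theorem~\ref{srs-29}), is to invoke (\cite{hc}, Theorem~1): a $\mathcal Z(\mathfrak g_{\mathbb C})$-finite, $K$-finite $f$ satisfies $f=f\star\alpha$ for some $\alpha\in C_c^\infty(G)$, whence $u.f=f\star(u.\alpha)$ and $|u.f(x)|\le C_u\sup_{h\in\Omega}|f(xh)|\le C_u'\,\|x\|^{r}$ by submultiplicativity of the norm, with the exponent $r$ coming from the $u=1$ case of (MG-2); this packages your parametrix step into a single citation. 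Two small points in your write-up: the elliptic estimate should be taken over right neighborhoods $xW'$ rather than left cosets $W'x$, since the operators $u$ differentiate $t\mapsto f(x\exp(tX))$ (either version transfers the bound because $\|xg\|\le\|x\|\,\|g\|$ and $\|gx\|\le\|g\|\,\|x\|$, but the estimate you get from the parametrix is the right-sided one); and when you project to a single $K$-type you should note that $E_\delta(f)(x)=d(\delta)\int_K\overline{\xi_\delta(k)}f(xk)\,dk$ inherits the bound with the same $r$ because $\|xk\|=\|x\|$ for $k\in K$, which follows from properties (1), (2), (4) of the norm.
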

\begin{proof} This is a simple observation made in (\cite{MuicSMOOTH}, Lemma 4.4).
\end{proof}

\vskip .2in

\begin{Lem}\label{src-4} The Garding space in   $\mathcal S\left(\Gamma\backslash G\right)'$ is equal to the space
  $\mathcal A_{umg}(\Gamma \backslash G)$.
  \end{Lem}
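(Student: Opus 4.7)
The plan is to identify both sides with the space of smooth vectors of $\mathcal{S}(\Gamma\backslash G)'$, using Dixmier--Malliavin to match smoothness with the G\aa rding property.

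For the inclusion $\mathcal{A}_{umg}(\Gamma\backslash G)\subseteq\,$G\aa rding space, I embed $\mathcal{A}_{umg}(\Gamma\backslash G)$ into $\mathcal{S}(\Gamma\backslash G)'$ by $f\mapsto T_f$, where $T_f(\phi)=\int_{\Gamma\backslash G}f(g)\phi(g)\,dg$. Convergence and the continuity estimate $|T_f(\phi)|\le C\|\phi\|_{1,-n}$ for suitable $n$ follow from (A-3'), rapid decay of $\phi$, and integrability of $\|g\|_{\Gamma\backslash G}^{-d}$ on $\Gamma\backslash G$ for large $d$ (the quotient analogue of Lemma~\ref{paf-0} established in \cite{casselman-1}). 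A change of variables yields $\pi(h)T_f=T_{R(h)f}$; differentiating under the integral and appealing to dominated convergence shows $h\mapsto\pi(h)T_f$ is smooth into $\mathcal{S}(\Gamma\backslash G)'$. The key point is that the \emph{uniform} moderate growth bound gives a single exponent $r$ controlling $u.f$ for all $u\in\mathcal{U}(\mathfrak g_{\mathbb C})$, so the relevant estimates hold in the strong topology (uniformly on bounded subsets of $\mathcal{S}(\Gamma\backslash G)$). By Dixmier--Malliavin applied to the continuous $G$-action on $\mathcal{S}(\Gamma\backslash G)'$, the space of smooth vectors equals the G\aa rding space, so $T_f$ is a G\aa rding vector.

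For the reverse inclusion, I would show every G\aa rding vector $\pi(\phi)T$ (with $\phi\in C_c^\infty(G)$ and $T\in\mathcal{S}(\Gamma\backslash G)'$) equals $T_f$ for
\[
f(y)=T(\Phi_y),\qquad \Phi_y(z)=\sum_{\gamma\in\Gamma}\phi(y^{-1}\gamma z).
\]
Since $\phi$ has compact support, $\Phi_y$ is a finite sum and lies in $C_c^\infty(\Gamma\backslash G)\subset\mathcal{S}(\Gamma\backslash G)$. Smoothness of $f$ follows by differentiating $\Phi_y$ in $y$ inside $T$, and $\Gamma$-invariance by reindexing the sum over $\Gamma$. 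Unfolding $\int_{\Gamma\backslash G}f(y)\psi(y)\,dy$ against $\psi\in\mathcal{S}(\Gamma\backslash G)$ matches the explicit formula for $(\pi(\phi)T)(\psi)$, giving the identification $\pi(\phi)T=T_f$.

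The main obstacle is proving that $f$ has uniform moderate growth. Continuity of $T$ yields a bound $|T(\Phi)|\le C\sum_i\|\Phi\|_{u_i,-n_i}$ for finitely many fixed $u_i\in\mathcal{U}(\mathfrak g_{\mathbb C})$ and integers $n_i\ge 1$, so one must estimate each $\|u.\Phi_y\|_{u_i,-n_i}$ polynomially in $\|y\|$ with exponent \emph{independent} of the differential operator $u$ applied to $f$. The support condition $y^{-1}\gamma z\in\supp(\phi)$ forces $\|z\|_{\Gamma\backslash G}\le c_\phi\|y\|$ at points where $\Phi_y\ne 0$, converting each weight $\|z\|_{\Gamma\backslash G}^{n_i}$ into a power of $\|y\|$, while a standard lattice-point count for the discrete group $\Gamma$ bounds the number of nonzero summands in $\Phi_y(z)$ polynomially in $\|y\|\cdot\|z\|$. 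The decisive observation is that $u_1,\dots,u_N$ depend only on $T$ and not on $u$, so all derivatives $u.f$ share a single polynomial exponent in $\|y\|$ -- precisely what distinguishes uniform moderate growth from pointwise moderate growth.
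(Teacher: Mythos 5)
The paper itself does not prove this lemma; it simply cites (\cite{casselman-1}, Theorem 1.16), so a self-contained argument is a genuinely different (and more informative) route. Your reverse inclusion is in the right spirit and is essentially Casselman's own mechanism: represent the Garding vector $\pi(\phi)T$ by $f(y)=T(\Phi_y)$ and observe that the growth exponent of $u.f$ is governed by the finitely many seminorms dominating $T$, hence is independent of $u$ --- that is exactly where uniformity comes from. The one point to tighten there is the lattice count: the number of nonzero terms in $\Phi_y(z)$ must be bounded polynomially in $\|y\|$ \emph{alone}, e.g. via $\#\{\gamma:\ \|\gamma z\|\le c_\phi\|y\|\}\le M_d\,(c_\phi\|y\|)^d$ using (\ref{srs-13}); a bound polynomial in $\|y\|\cdot\|z\|$, as you state it, would not survive the supremum over $z$ defining $\|\cdot\|_{u_i,-n_i}$, since $\|z\|$ is not controlled by $\|z\|_{\Gamma\backslash G}$.

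The genuine gap is in the forward inclusion. You show that $T_f$ is a smooth vector and then invoke Dixmier--Malliavin for the $G$-action on $\mathcal S(\Gamma\backslash G)'$ to conclude that the smooth vectors coincide with the Garding space, so that $T_f$ is a Garding vector. That theorem applies to Fr\'echet representations; $\mathcal S(\Gamma\backslash G)'$ is the strong dual of a Fr\'echet space and is not Fr\'echet, and the paper explicitly remarks immediately after this lemma that \cite{dixmal} cannot be applied to it. Worse, the equality you assert is actually \emph{false} for this representation: by Corollary \ref{srs-20}, $\beta_\varphi$ is a smooth vector of $\mathcal S(\Gamma\backslash G)'$ for every $\varphi\in L^2(\Gamma\backslash G)$, including $\varphi$ with no smooth representative, so the space of smooth vectors strictly contains $\mathcal A_{umg}(\Gamma\backslash G)$. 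The repair stays entirely within your framework: given $f\in\mathcal A_{umg}(\Gamma\backslash G)$, it lies in some $\mathcal A_{umg,n}(\Gamma\backslash G)$, which \emph{is} a smooth Fr\'echet representation of moderate growth (Lemma \ref{srs-24}); apply Dixmier--Malliavin there to write $f=\sum_i r(\phi_i)f_i$ with $\phi_i\in C_c^\infty(G)$ and $f_i\in\mathcal A_{umg,n}(\Gamma\backslash G)$, and then $T_f=\sum_i \pi(\phi_i)T_{f_i}$ exhibits $T_f$ directly as a Garding vector of $\mathcal S(\Gamma\backslash G)'$.
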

\begin{proof} This (\cite{casselman-1}, Theorem 1.16).
\end{proof}

\vskip .2in
We remark that $\mathcal S\left(\Gamma\backslash G\right)'$ is not a Fr\' echet space  so \cite{dixmal} can not be applied
to prove that the space of smooth vectors is the same as the Garding space. Therefore, for example, in the settings of
Lemma \ref{src-3},
$\mathcal A^\infty(\Gamma\backslash G)$ is just subspace of the space of all $\cal Z(\mathfrak g_{\mathbb C})$--finite vectors in
$\mathcal S\left(\Gamma\backslash G\right)'$.

\vskip .2in
Regarding smooth vectors in $\mathcal S\left(\Gamma\backslash G\right)'$, the following 
lemma will be used later (see \cite{MuicSMOOTH}, Lemma 4.6):

\medskip 
\begin{Lem}\label{src-5} Assume that $f\in L^p(\Gamma\setminus G)$, for some $p\ge 1$, and $\alpha\in C_c^\infty(G)$. Then,
  $f\star \alpha$ is equal almost everywhere to a function in  $\mathcal A_{umg}(\Gamma \backslash G)$.
\end{Lem}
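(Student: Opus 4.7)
The plan is to define the pointwise integral $F(x) := \int_G f(y)\alpha(y^{-1}x)\,dy$, show that it agrees with the $L^p$-object $f\star\alpha$, and verify directly that $F \in \mathcal A_{umg}(\Gamma\backslash G)$. For each $x$ the integrand is supported on the compact set $x\,\supp(\alpha)^{-1}$, and $|f|$ is locally integrable on $G$ (since $|f|^p$ is $\Gamma$-invariant and descends to $L^1(\Gamma\backslash G)$), so the integral converges absolutely. Left $\Gamma$-invariance of $F$ is immediate from the substitution $y \mapsto \gamma y$. Differentiation under the integral sign gives $F \in C^\infty(G)$ together with
$$u.F = f \star (u.\alpha), \qquad u \in \mathcal U(\frakg_\mathbb{C}),$$
and $u.\alpha \in C_c^\infty(G)$ has support inside $\Omega := \supp(\alpha)$. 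Hence it suffices to produce $r > 0$ depending only on $\Omega$, $p$, $\Gamma$, $G$ (not on $\beta$) such that $|(f \star \beta)(x)| \le C_\beta\,\|x\|^r$ for every $\beta \in C_c^\infty(G)$ with $\supp(\beta) \subset \Omega$; this is precisely UMG-2.

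For the estimate, H\"older's inequality on $G$ gives
$$|(f \star \beta)(x)| \le \|\beta\|_{L^{p'}(G)} \biggl(\int_{x\Omega^{-1}} |f(y)|^p \, dy\biggr)^{1/p},$$
and unfolding against the $\Gamma$-invariance of $|f|^p$ turns the remaining integral into
$$\int_{\Gamma\backslash G} |f(y)|^p\,N(x,y)\,dy, \qquad N(x,y) := \#\bigl(\Gamma \cap x\Omega^{-1}y^{-1}\bigr).$$
The key observation is that for $\gamma_1,\gamma_2 \in \Gamma \cap x\Omega^{-1}y^{-1}$ one has $\gamma_1\gamma_2^{-1} \in x(\Omega^{-1}\Omega)x^{-1}$, which yields the $y$-independent bound $N(x,y) \le \#\bigl(\Gamma \cap x(\Omega^{-1}\Omega)x^{-1}\bigr)$. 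The norm axioms $\|g_1 g_2\| \le \|g_1\|\|g_2\|$ and $\|x^{-1}\| = \|x\|$ then place this conjugate inside the norm-ball of radius $M\|x\|^2$ with $M := \sup_{\omega\in\Omega^{-1}\Omega}\|\omega\|$.

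The main technical step is the polynomial lattice-point estimate $\#\{\gamma \in \Gamma : \|\gamma\| \le R\} \le CR^d$ for some $d > 0$. I would extract this from Lemma \ref{paf-0}: the integrability $\int_G \|g\|^{-d}\,dg < \infty$ for $d \ge d_0$ combined with monotonicity of $R \mapsto \mu(\{g : \|g\| \le R\})$ yields $\mu(B_R) \le C R^d$ for large $R$; the discreteness of $\Gamma$ then lets one cover $\Gamma \cap B_R$ by disjoint translates of a fixed relatively compact neighborhood of $e$, converting the volume bound into the desired counting bound. Combining everything gives $|(f\star\beta)(x)| \le C_\beta\,\|x\|^{2d/p}$ with $r := 2d/p$ depending only on $\Omega$, $p$, $\Gamma$, $G$, and applying this with $\beta = u.\alpha$ establishes UMG-2 for $F$ with a uniform $r$. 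The only delicate ingredient is the polynomial Haar-volume growth of norm-balls; the remaining H\"older and unfolding estimates are routine.
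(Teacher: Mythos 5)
The paper itself gives no proof of Lemma \ref{src-5}: it only cites (\cite{MuicSMOOTH}, Lemma 4.6), and the surrounding material (Lemma \ref{src-4}, the proof of Lemma \ref{srs-26}) makes clear that the intended route is the soft one --- embed $L^p(\Gamma\backslash G)$ continuously into $\mathcal S(\Gamma\backslash G)'$ and observe that $f\star\alpha$ represents a vector in the Garding space of that representation, which by Casselman's Theorem 1.16 equals $\mathcal A_{umg}(\Gamma\backslash G)$. Your argument is a correct, self-contained alternative that avoids Casselman's theorem entirely, and the individual steps check out: absolute convergence and smoothness of $F$ follow from local integrability of $f$ (itself a consequence of the same counting observation $N_C(y)\le \#\left(\Gamma\cap CC^{-1}\right)$ that you use later, so you may want to make that explicit); the identity $u.F=f\star(u.\alpha)$ correctly reduces (UMG-2) to a single estimate uniform over $\beta$ supported in $\Omega$; the unfolding together with the injection $\gamma\mapsto\gamma_1\gamma^{-1}$ gives $N(x,y)\le\#\left(\Gamma\cap x\Omega^{-1}\Omega x^{-1}\right)$; and the conjugation bound $\|x\omega x^{-1}\|\le M\|x\|^2$ plus the lattice-point count yields the exponent $r=2d/p$ independent of $u$, which is exactly what (UMG-2) demands. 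Your ``delicate ingredient'' is in fact already available in the paper: display (\ref{srs-13}) gives $\#\{\gamma\in\Gamma:\|\gamma\|\le R\}\,R^{-d}\le\sum_{\gamma}\|\gamma\|^{-d}\le M_d$, so the volume-covering derivation, while correct, can be replaced by a one-line appeal to Casselman's Lemma 1.10. What your approach buys is elementarity and an explicit uniform growth exponent; what the paper's route buys is brevity and the fact that it identifies all of the Garding space at once rather than only the convolutions $f\star\alpha$.
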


\section{Some Results on the Spaces  $\mathcal S(\Gamma \backslash G)'$}\label{srs}

This section is strongly motivated by a lecture of Wallach \cite{W3}. Some of the results here
are probably well--known, and we present our way of understanding them. We also give a complete description of irreducible closed
subrepresentations  $\mathcal S(\Gamma\backslash G)'$. We prove that under proper assumptions on $G$ and $\Gamma$ only finite dimensional
subrepresentation of  $\mathcal S(\Gamma\backslash G)'$ is trivial representation. 
\vskip .2in 

In this section, we
let $(\pi, \mathcal H)$ be an irreducible admissible representation of $G$  acting on the Hilbert space $\mathcal H$.
 We write $\langle \ , \ \rangle$ for the inner product on $\mathcal H$. We denote by $\mathcal H^\infty$ 
the subspace of smooth vectors in 
$\mathcal H$. It is a complete Fr\' echet space under the family of semi--norms:

$$
||h||_u =||\pi(u)h||, \ \ u\in \mathcal U(\mathfrak g_{\mathbb C}),
$$
where $||\ ||$  is the norm on $\mathcal H$ derived from  $\langle \ , \ \rangle$. It is a smooth Frech\' et representation of moderate growth
(\cite{W2}, Lemma 11.5.1). In particular, if $\lambda$ is a continuous functional on $\mathcal H^\infty$, then there exists
$d\in \mathbb R$, and a continuous semi-norm $\kappa$ such that
\begin{equation}\label{srs-01}
\left|\lambda\left(\pi(g)h\right)\right|\le ||g||^d \kappa(h), \ \ g\in G, \ \ h\in \mathcal H^\infty.
\end{equation}
The reader can easily check that if (\ref{srs-01}) holds for any $d=d_0$, then it holds for
all $d\ge d_0$. We make the following definition (see also \cite{MuicJFA}, (3-4)):

\medskip 
\begin{Def}\label{srs-d1} Let $d_{\mathcal H, \lambda}=d_{\pi, \lambda}\ge -\infty$ be the infimum of all 
$d\in \mathbb R$ 
such that (\ref{srs-01}) holds for some continuous semi--norm $\kappa=\kappa_d$. 
  \end{Def}

\medskip
\begin{Lem}\label{srs-02} The Frech\' et representation $G$ on $\mathcal H^\infty$  is irreducible in the category of
  Frech\' et representations.
\end{Lem}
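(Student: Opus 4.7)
The plan is to reduce Fr\' echet irreducibility of $\mathcal{H}^\infty$ to the Harish--Chandra irreducibility of the underlying $(\mathfrak{g},K)$--module $\mathcal{H}_K$, using that $\mathcal{H}^\infty$ is a smooth Fr\' echet representation of moderate growth and that $K$--finite vectors are dense in it. Let $V \subset \mathcal{H}^\infty$ be a nonzero closed $G$--invariant subspace; the goal is to conclude $V = \mathcal{H}^\infty$. Since $(\pi,\mathcal H)$ is irreducible and admissible as a representation on a Hilbert space, classical Harish--Chandra theory tells us that $\mathcal{H}_K$ is an irreducible $(\mathfrak g, K)$--module.

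First I would check that $V_K := V \cap \mathcal{H}_K$ is nonzero. The subspace $V$ is $K$--stable (being $G$--stable) and closed, so for each $\delta \in \widehat K$ the $K$--isotypic projection
$$
P_\delta v \;=\; d_\delta \int_K \overline{\chi_\delta(k)}\, \pi(k) v \, dk
$$
lies in $V$, as a Bochner integral of a continuous $V$--valued function on a compact set. If $v \in V$ is nonzero, its image in $\mathcal H$ is nonzero, so some $P_\delta v$ must be nonzero, giving a nonzero element of $V_K$. Next, $V_K$ is a $(\mathfrak g,K)$--submodule of $\mathcal H_K$ (stability under $\mathfrak g$ follows by differentiating the $G$--action, which preserves the closed subspace $V$ of smooth vectors). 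Irreducibility of $\mathcal H_K$ now forces $V_K = \mathcal H_K$.

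Finally, I would invoke density of $K$--finite vectors in $\mathcal H^\infty$: for every $v \in \mathcal H^\infty$ the series $\sum_{\delta \in \widehat K} P_\delta v$ converges to $v$ in the Fr\' echet topology of $\mathcal H^\infty$, because the norms $\|P_\delta v\|_u$ form a rapidly decreasing family in $\delta$ for each $u \in \mathcal U(\mathfrak g_{\mathbb C})$ (a standard fact for smooth Fr\' echet representations of moderate growth, see \cite{W2}). Hence $\mathcal H_K \subset V$ is dense in $\mathcal H^\infty$, and since $V$ is closed, $V = \mathcal H^\infty$. Equivalently, one can appeal to the Casselman--Wallach uniqueness of the smooth moderate growth globalization of $\mathcal H_K$: the closed subspace $V$ is itself such a globalization (of $V_K = \mathcal H_K$), so it must agree with $\mathcal H^\infty$.

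The main obstacle is the Fr\' echet--topology density of $K$--finite vectors in $\mathcal H^\infty$ (equivalently, the uniqueness statement from Casselman--Wallach). This is the deep input; once it is available, the rest of the argument is formal, reducing everything to the algebraic irreducibility of $\mathcal H_K$ which is the standard theorem of Harish--Chandra.
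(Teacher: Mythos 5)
Your proof is correct and follows essentially the same route as the paper: both arguments extract a nonzero $(\mathfrak g,K)$--submodule of $\mathcal H_K$ inside the closed invariant subspace via the $K$--isotypic projections, invoke Harish--Chandra irreducibility of $\mathcal H_K$, and conclude by the absolutely convergent Fourier expansion (density of $K$--finite vectors), with the Casselman--Wallach globalization cited as the alternative shortcut. No substantive differences.
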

\begin{proof} This representation is a canonical globalization (see \cite{W2}, Chapter 11, or \cite{casselman}) of a
  $(\mathfrak g, K)$--module $\mathcal H_K$. Hence, the lemma. It is also to give a direct proof. Let $\mathcal V\subset \mathcal
  H^\infty$ be a closed subrepresenation different than $\{0\}$. Pick any $v\in \mathcal V$, $v\ne 0$. Then since $\mathcal H^\infty$ is a
  smooth representation, the Fourier expansion converges absolutely (\cite{hc}, Lemma 5):
  $$
  v=\sum_{\delta\in \hat{K}} \ E_\delta(v),
  $$
  where we fix the normalized Haar
measure $dk$ on $K$, and  let
$$
E_\delta(v)=\int_{K} d(\delta) \overline{\xi_\delta(k)}\ \pi(k)v\ dk. 
$$
Here, as usual $\hat{K}$ is the set of
equivalence of irreducible representations of $K$. Also, for $\delta\in 
\hat{K}$, we write $d(\delta)$ and $\xi_\delta$ for the degree and
character of $\delta$, respectively.
The vector $E_\delta(v)$ belongs to the $\delta$--isotypic component $\mathcal V(\delta)$ of
$\mathcal V$.  This shows that $\mathcal H_K\cap \mathcal V$ is dense in $\mathcal V$. In particular,
$\mathcal H_K\cap \mathcal V$ is non--zero $(\mathfrak g, K)$--submodule of $\mathcal H_K$. Hence,
$\mathcal H_K\subset \mathcal V$ since $\mathcal H_K$ is irreducible. But because of the same reason
$\mathcal H_K$ is dense in $\mathcal H^\infty$. This implies that $\mathcal V=\mathcal H^\infty$.
 \end{proof}

\vskip .2in

\begin{Prop}\label{srs-0}
Let $\Gamma\subset G$ be a discrete subgroup.
Let $\lambda$ be a continuous functional on $\mathcal H^\infty$ which is $\Gamma$--invariant.  Then, we have the following:
\begin{itemize}
\item[(i)] The  pairing
$
\mathcal H^\infty \times \mathcal S(\Gamma\backslash G)\longrightarrow \mathbb C
$
given by $(h, f)\longmapsto \int_{\Gamma\backslash G}\lambda(\pi(g)h) f(g)dg$
is well--defined, continuous, and $G$--equivariant. 

\item[(ii)]  The map $ \mathcal H^\infty \longrightarrow \mathcal S(\Gamma \backslash G)'$
which maps $h\longmapsto \alpha_{\lambda, \Gamma}(h)$
where
$$
\alpha_{\lambda, \Gamma}(h)(f)=\int_{\Gamma\backslash G} \lambda(\pi(g)h) f(g)dg, \ \ f\in \mathcal S(\Gamma\backslash G),
$$
is a continuous map of locally convex representations of $G$. The image is contained in $\mathcal A_{umg}(\Gamma \backslash G)$.
\item[(iii)] If $\lambda\neq 0$, then $\alpha_{\lambda, \Gamma}$ is an embedding. The closure 
$Cl\left(\alpha_{\lambda, \Gamma}\left(\mathcal H^\infty\right) \right)$ is a closed irreducible admissible subrepresentation of 
  $\mathcal S(\Gamma\backslash G)'$. 
\end{itemize}
\end{Prop}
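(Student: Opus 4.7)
My plan is to verify parts (i), (ii), (iii) in order, with the pivotal observation being that the $\Gamma$-invariance of $\lambda$ lets us replace $||g||$ by $||g||_{\Gamma\backslash G}$ in the fundamental bound (\ref{srs-01}). Indeed, since $\lambda(\pi(\gamma g)h)=\lambda(\pi(g)h)$ for $\gamma\in\Gamma$, applying (\ref{srs-01}) to $\gamma g$ and taking the infimum over $\gamma\in\Gamma$ yields
$$
|\lambda(\pi(g)h)|\le ||g||_{\Gamma\backslash G}^d\,\kappa(h).
$$
For (i), combining this with the rapid decay of $f\in\mathcal S(\Gamma\backslash G)$ in $||\cdot||_{\Gamma\backslash G}$ from (CS-2), together with standard integrability of $||\cdot||_{\Gamma\backslash G}^{-d}$ on $\Gamma\backslash G$ for $d$ large (in the spirit of Lemma \ref{paf-0}), makes the integrand absolutely integrable and bounds the pairing by a product of a seminorm of $h$ and a seminorm of $f$; this gives joint continuity. $G$-equivariance is then a routine change of variables $g\mapsto gg_0^{-1}$.

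For (ii), continuity of $\alpha_{\lambda,\Gamma}$ is just the continuity in $h$ from (i). To locate the image inside $\mathcal A_{umg}(\Gamma\backslash G)$, I would verify directly that the function $\varphi_h(g):=\lambda(\pi(g)h)$ itself lies in $\mathcal A_{umg}(\Gamma\backslash G)$: smoothness comes from $\mathcal H^\infty$ being a smooth Fr\'echet representation, (UMG-1) from the $\Gamma$-invariance of $\lambda$, and (UMG-2) from the bound $|u.\varphi_h(g)|=|\lambda(\pi(g)\pi(u)h)|\le||g||^d\kappa(\pi(u)h)$, where crucially the exponent $d$ is independent of $u\in\mathcal U(\mathfrak g_{\mathbb C})$. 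Then $\alpha_{\lambda,\Gamma}(h)$ is precisely the distribution given by integration against $\varphi_h$, and the inclusion $\mathcal A_{umg}(\Gamma\backslash G)\subset\mathcal S(\Gamma\backslash G)'$ (Lemma \ref{src-4}) identifies the image.

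For (iii), injectivity is quick: the kernel of $\alpha_{\lambda,\Gamma}$ is closed and $G$-invariant, so by Lemma \ref{srs-02} it is either $\{0\}$ or all of $\mathcal H^\infty$; the latter would force $\varphi_h\equiv 0$ for every $h$, hence $\lambda=0$, contrary to hypothesis. Setting $V:=\mathrm{Cl}(\alpha_{\lambda,\Gamma}(\mathcal H^\infty))$, for admissibility I would use that the $K$-isotypic projector $E_\delta=\int_K d(\delta)\overline{\xi_\delta(k)}\,\pi(k)\,dk$ is continuous on $\mathcal S(\Gamma\backslash G)'$ (a standard fact for continuous compact-group actions on complete locally convex spaces) and commutes with $\alpha_{\lambda,\Gamma}$; hence $E_\delta(\alpha_{\lambda,\Gamma}(\mathcal H^\infty))=\alpha_{\lambda,\Gamma}(\mathcal H(\delta))$ is finite-dimensional and closed, and continuity of $E_\delta$ forces $V(\delta)=\alpha_{\lambda,\Gamma}(\mathcal H(\delta))$, so in particular $V_K=\alpha_{\lambda,\Gamma}(\mathcal H_K)$ is irreducible as a $(\mathfrak g,K)$-module.

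The main obstacle is passing from irreducibility of $V_K$ as a $(\mathfrak g,K)$-module to topological irreducibility of $V$ inside the non-Fr\'echet space $\mathcal S(\Gamma\backslash G)'$. My plan is a two-step density argument: first, for any continuous representation of the compact group $K$ on a complete locally convex space, the $K$-finite vectors are dense (approximate the identity in $C(K)$ by finite linear combinations of matrix coefficients via Peter--Weyl and apply the corresponding $K$-averaging operators); applied both to $V$ and to any closed $G$-invariant (hence $K$-invariant) subspace $W\subset V$, this yields that $V_K$ is dense in $V$ and $W_K=W\cap V_K$ is dense in $W$. Second, as a $(\mathfrak g,K)$-submodule of the irreducible module $V_K$, the space $W_K$ is either $\{0\}$ or all of $V_K$, whence by the first step $W=\{0\}$ or $W=V$, establishing irreducibility.
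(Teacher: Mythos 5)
Your treatment of (i), (ii), and the embedding/admissibility portion of (iii) follows the paper's proof essentially step for step: the same replacement of $\|g\|$ by $\|g\|_{\Gamma\backslash G}$ via $\Gamma$--invariance (you should note, as the paper does, that one may take $d>0$ in (\ref{srs-01}) so that passing to the infimum over $\gamma$ behaves correctly under the $d$--th power), the same appeal to integrability of $\|\cdot\|_{\Gamma\backslash G}^{-d_0}$ on $\Gamma\backslash G$, the same direct verification that $g\mapsto\lambda(\pi(g)h)$ satisfies (UMG-1)--(UMG-2) with $d$ independent of $u$, and the same $E_\delta$--projector computation giving $V(\delta)=\alpha_{\lambda,\Gamma}(\mathcal H_K(\delta))$. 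Where you genuinely diverge is the final irreducibility step. The paper passes to the subspace of smooth vectors $Cl\left(\alpha_{\lambda,\Gamma}(\mathcal H^\infty)\right)^\infty$, proves it is irreducible by Fourier--expanding smooth vectors as in Lemma \ref{srs-02}, and then uses density of smooth vectors in any closed invariant subspace $W$ (\cite{hc}, Corollary 1); you instead work directly with $K$--finite vectors, using Peter--Weyl density of $K$--finite vectors in any continuous representation of $K$ on a complete locally convex space, applied to both $V$ and $W$, together with irreducibility of $V_K$ as a $(\mathfrak g,K)$--module. Both routes are valid and rest on the same foundational fact that the $G$--action (hence the $K$--action) on $\mathcal S(\Gamma\backslash G)'$ is continuous; yours is somewhat more self--contained for this step, since it bypasses the G\aa rding--space/Dixmier--Malliavin input hidden in the density of smooth vectors, while the paper's version has the side benefit of establishing irreducibility of the smooth--vector subrepresentation itself, which is reused later (e.g.\ in the proof of Theorem \ref{srs-29}). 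One point you should make explicit: for $W_K=W\cap V_K$ to be a $(\mathfrak g,K)$--submodule you need to observe that $V_K=\alpha_{\lambda,\Gamma}(\mathcal H_K)$ consists of smooth vectors (its elements lie in $\mathcal A_{umg}(\Gamma\backslash G)$, the G\aa rding space), so that $\mathfrak g$ acts and the resulting derivatives remain in the closed subspace $W$.
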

\begin{proof} We prove (i). First, we may assume that $d>0$ in  (\ref{srs-01}). Then, 
$\Gamma$--invariance implies that
$$
\left|\lambda\left(\pi(g)h\right)\right|=\left|\lambda\left(\pi(\gamma g)h\right)\right|\le ||\gamma g||^d \kappa(h), 
$$
for all $\gamma\in \Gamma$,  $g\in G$, and $h\in \mathcal H^\infty$.  Hence
$$
\left|\lambda\left(\pi(g)h\right)\right| \le ||g||^d_{\Gamma\backslash G} \kappa(h), 
$$
 $g\in G$, and $h\in \mathcal H^\infty$.

Next,  $\int_G||g|^{-d_0}dg<\infty$ for all sufficiently large $d_0>0$. Then,
(\cite{casselman-1}, Proposition 1.9) implies that $\int_{\Gamma\backslash G}||g|^{-d_0}_{\Gamma\backslash G} dg<\infty$ for
all sufficiently large $d_0>0$. Hence
\begin{equation}\label{srs-03}
\left| \int_{\Gamma\backslash G}\lambda(\pi(g)h) f(g)dg\right|\le  \int_{\Gamma\backslash G}\left|\lambda(\pi(g)h) f(g)\right|dg \le
\kappa(h)||f||_{1,-d_0} \cdot \int_{\Gamma\backslash G}\frac{1}{||g||^{-d+d_0}_{\Gamma\backslash G}} dg.
\end{equation}
Consequently, the pairing is well--defined and continuous. It is clearly $G$--equivariant.
This proves (i).

Now, we prove (ii). The continuity of $\alpha_{\lambda, \Gamma}$ is obvious from above inequality since if 
$B\subset \mathcal S(\Gamma \backslash G)$ is bounded, and  if we let
$$
M_B=\sup_{f\in B} \ ||f||_{1,-d_0}<\infty,
$$
then we have
$$
||\alpha_{\lambda, \Gamma}(h)||_B=\sup_{f\in B} \ |\alpha_{\lambda, \Gamma}(h)(f)|\le M_B \cdot \left( \int_{\Gamma\backslash G}\frac{1}{
||g||^{-d+d_0}_{\Gamma\backslash G}} dg \right) \kappa(h), \ \
h\in \mathcal H^\infty.
$$
Next, the first paragraph of the proof shows that  the function $g\longmapsto \lambda(\pi(g)h)$ belongs to 
$ \mathcal A_{umg}(\Gamma \backslash G)$.  This completes the proof of (ii).

The different argument is based on results of Casselman (see Lemma \ref{src-4}).
Indeed. becasue of the Dixmier--Malliavin, each $h\in \mathcal H^\infty$ can be written in the form
$$
h=\sum_{i=1}^l \pi(\beta_i)h_i,
$$
for some $\beta_i\in C_c^\infty(G)$ and $h_i\in \mathcal H^\infty$.  Hence, we have
$$
\alpha_{\lambda, \Gamma}(h)=\sum_{i=i}^l r'(\beta_i)\alpha(h_i)
$$
which implies that $\alpha_{\lambda, \Gamma}(h)\in \mathcal A_{umg}(\Gamma\backslash G)$.

Now, we prove (iii). Let  $f\in C_c^\infty(G)$. Then, $P_\Gamma(f)(x)\overset{def}{=}\sum_{\gamma\in \Gamma} f(\gamma x)$ 
for $x\in G$, defines an element of  $\mathcal S(\Gamma\backslash G)$ which is compactly supported modulo $\Gamma$.
For $h\in \mathcal H^\infty$, we have 
$$
\alpha_{\lambda, \Gamma}(h)\left(P_\Gamma(f)\right)=
\int_{\Gamma\backslash G} \lambda(\pi(g)h) P_\Gamma(f)(g)dg=
\int_G \lambda(\pi(g)h) f(g)dg.
$$
Letting $f\in C_c^\infty(G)$ vary, we see that there exists at least one  $h\in \mathcal H^\infty$ such that 
$\alpha_{\lambda, \Gamma}(h)\neq 0$ provided that $\lambda\neq 0$. In view of Lemma \ref{srs-02}, this implies that 
$\alpha_{\lambda, \Gamma}$ is an embedding. Next, as in the proof of Lemma \ref{srs-02}, we define projectors
$$
E_\delta(\alpha)=\int_{K} d(\delta) \overline{\xi_\delta(k)}\ r'(k)\alpha\ dk, \ \ \alpha\in  \mathcal S(\Gamma\backslash G)'
$$
for $\delta\in \hat{K}$. Since $\alpha_{\lambda, \Gamma}\left(\mathcal H^\infty\right)$ is obviously dense in 
$Cl\left(\alpha_{\lambda, \Gamma}\left(\mathcal H^\infty\right) \right)$, we have that 
$$
E_\delta\left(\alpha_{\lambda, \Gamma}\left(\mathcal H^\infty\right)\right)
$$
is dense in
$$
E_\delta\left(Cl\left(\alpha_{\lambda, \Gamma}\left(\mathcal H^\infty\right) \right)
\right).
$$
But
$$
E_\delta\left(\alpha_{\lambda, \Gamma}\left(\mathcal H^\infty\right)\right)=
\alpha_{\lambda, \Gamma}\left(E_\delta\left(\mathcal H^\infty\right)\right)=\alpha_{\lambda, \Gamma}\left(
\mathcal H^\infty(\delta)\right)=\alpha_{\lambda, \Gamma}\left(
\mathcal H_K(\delta)\right)
$$
is a finite--dimensional space. Hence, it is closed. Thus, we have that
$$
Cl\left(\alpha_{\lambda, \Gamma}\left(\mathcal H^\infty\right)\right) (\delta)\overset{def}{=}
E_\delta\left(Cl\left(\alpha_{\lambda, \Gamma}\left(\mathcal H^\infty\right) \right)
\right)=\alpha_{\lambda, \Gamma}\left(\mathcal H_K(\delta)\right)
$$
is finite--dimensional. This proves that $Cl\left(\alpha_{\lambda, \Gamma}\left(\mathcal H^\infty\right)\right) $
is admissible. We show that $Cl\left(\alpha_{\lambda, \Gamma}\left(\mathcal H^\infty\right)\right) $ is irreducible i.e.,
only closed $G$--invariant subspaces of $Cl\left(\alpha_{\lambda, \Gamma}\left(\mathcal H^\infty\right)\right) $
are $\{0\}$ and $Cl\left(\alpha_{\lambda, \Gamma}\left(\mathcal H^\infty\right)\right) $.  We use smooth vectors.

Using the argument from (\cite{W1}, Lemma 1.6.4), the subspace of smooth vectors
$Cl\left(\alpha_{\lambda, \Gamma}\left(\mathcal H^\infty\right)\right)^\infty$ in
$Cl\left(\alpha_{\lambda, \Gamma}\left(\mathcal H^\infty\right)\right) $
is a complete locally convex representation of $G$ where topology is defined by the semi--norms:
$$
\alpha\longmapsto ||r'(u)\alpha||_B,
$$
where $u\in  \mathcal U(\mathfrak g_{\mathbb C})$ and $B\subset \mathcal S(\Gamma \backslash G)$ is bounded. 
The key thing is that each smooth vector has a Fourier expansion analogous to the one in the proof of Lemma \ref{srs-02}.
Then, as in the proof of Lemma \ref{srs-02} we see that  $Cl\left(\alpha_{\lambda, \Gamma}\left(\mathcal H^\infty\right)\right)^\infty$
is irreducible meaning that only $G$--invariant subspaces are trivial and everything.

Now, if $W\subset Cl\left(\alpha_{\lambda, \Gamma}\left(\mathcal H^\infty\right)\right)$ is closed $G$--invariant subspace. Assume $W\neq 0$. Then
$$
W^\infty\subset Cl\left(\alpha_{\lambda, \Gamma}\left(\mathcal H^\infty\right)\right)^\infty
$$
is closed $G$--invariant subspace in appropriate topology. It is dense in $W$ (see \cite{hc}, Corollary 1),
and therefore non--zero. But then we must
have
$$
W^\infty=Cl\left(\alpha_{\lambda, \Gamma}\left(\mathcal H^\infty\right)\right)^\infty.
$$
Again because the smooth vectors are dense (\cite{hc}, Corollary 1), this implies
$$
W= Cl\left(\alpha_{\lambda, \Gamma}\left(\mathcal H^\infty\right)\right)
$$
\end{proof}

\vskip .2in 
 The Garding space $\mathcal A_{umg}(\Gamma \backslash G)$ has a natural filtration by the smooth Frech\' et representations:
 $$
 \mathcal S(\Gamma \backslash G)\subset \cdots
 \subset \mathcal A_{umg, -1}(\Gamma \backslash G) \subset \mathcal A_{umg, 0}(\Gamma \backslash G)\subset
 \mathcal A_{umg, 1}(\Gamma \backslash G)\subset
 \mathcal A_{umg, 2}(\Gamma \backslash G)\subset \cdots,
 $$
 where for an integer  $n$ we let
  $$
 \mathcal A_{umg, n}(\Gamma \backslash G)=\left\{\varphi\in \mathcal A_{umg}(\Gamma \backslash G); \ \
 ||\varphi||_{u, n}<\infty, \ \ u\in
 \mathcal U(\mathfrak g_{\mathbb C})\right\}.
 $$
 We remark that all embeddings are continuous, and that
 $$
 \mathcal S(\Gamma \backslash G)=\cap_{n\in \mathbb Z} \  \mathcal A_{umg, n}(\Gamma \backslash G)
 $$
 We may therefore let
  $$
 \mathcal A_{umg, -\infty}(\Gamma \backslash G)= \mathcal S(\Gamma \backslash G).
 $$
 \medskip

 \begin{Lem}\label{srs-24} Let $n\ge -\infty$. Then, the representation of $G$  on $\mathcal A_{umg, n}(\Gamma \backslash G)$
   is of moderate growth.
 \end{Lem}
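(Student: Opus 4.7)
The plan is a direct estimate of the defining seminorms $\|\cdot\|_{u,n}$ under the right regular action of $G$, combining sub-multiplicativity of the norm on $G$ with the polynomial growth of the matrix coefficients of $\mathrm{Ad}$ on finite-dimensional subspaces of $\mathcal{U}(\mathfrak{g}_{\mathbb{C}})$. The case $n=-\infty$ is Proposition \ref{src-1}(ii) applied to $\mathcal{S}(\Gamma\backslash G)=\mathcal{A}_{umg,-\infty}(\Gamma\backslash G)$, so I fix a finite integer $n$ from now on.

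For fixed $u\in\mathcal{U}(\mathfrak{g}_{\mathbb{C}})$, I would start from the identity
$$
(u\cdot r(g)\varphi)(x)=\bigl((\mathrm{Ad}(g^{-1})u)\cdot\varphi\bigr)(xg),
$$
and expand $\mathrm{Ad}(g^{-1})u=\sum_{i=1}^{k}c_i(g)\,u_i$ in a fixed basis $u_1,\dots,u_k$ of the finite-dimensional $\mathrm{Ad}$-stable subspace of $\mathcal{U}(\mathfrak{g}_{\mathbb{C}})$ consisting of elements of degree at most $\deg u$; the standard bound $|c_i(g)|\le C_0\|g\|^{N}$ then holds for constants depending only on $u$. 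From axioms (1)--(2) of the norm one has $\|yg^{-1}\|\le \|y\|\cdot\|g\|$ and $\|y\|\le \|yg^{-1}\|\cdot\|g\|$, which descend to the same inequalities for $\|\cdot\|_{\Gamma\backslash G}$ and yield
$$
\|yg^{-1}\|_{\Gamma\backslash G}^{-n}\le \|g\|^{|n|}\,\|y\|_{\Gamma\backslash G}^{-n}
$$
regardless of the sign of $n$. Changing variables $y=xg$ in the definition of $\|r(g)\varphi\|_{u,n}$ and assembling the pieces yields
$$
\|r(g)\varphi\|_{u,n}\le C_0\,\|g\|^{|n|+N}\sum_{i=1}^{k}\|\varphi\|_{u_i,n},
$$
which is precisely the moderate growth bound with continuous seminorm $\nu(\varphi):=\sum_{i=1}^{k}\|\varphi\|_{u_i,n}$.

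Smoothness of the representation on $\mathcal{A}_{umg,n}(\Gamma\backslash G)$, which is part of the definition recalled in Section \ref{src}, I would deduce from Lemma \ref{src-4}: every $\varphi\in\mathcal{A}_{umg}(\Gamma\backslash G)$ is already a smooth vector in $\mathcal{S}(\Gamma\backslash G)'$, so each Lie-algebra derivative $u\cdot\varphi$ again belongs to $\mathcal{A}_{umg,n}$, and the polynomial estimate above then upgrades to smoothness of $g\mapsto r(g)\varphi$ in the topology of $\mathcal{A}_{umg,n}$ by the usual bootstrap from continuity plus polynomial control on derivatives. The main obstacle is purely bookkeeping: one must handle the two signs of $n$ uniformly so that the shift in the argument and the $\mathrm{Ad}$-twist on $u$ both get absorbed into a single power of $\|g\|$; no input beyond the norm axioms of Section \ref{paf} and the polynomial growth of the adjoint representation on finite-dimensional pieces of $\mathcal{U}(\mathfrak{g}_{\mathbb{C}})$ is needed.
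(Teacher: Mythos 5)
Your proposal is correct and follows essentially the same route as the paper's proof: the identity $(u\cdot r(g)\varphi)(x)=((\mathrm{Ad}(g^{-1})u)\cdot\varphi)(xg)$, expansion of $\mathrm{Ad}(g^{-1})u$ in a finite-dimensional $\mathrm{Ad}$-stable subspace of $\mathcal U(\mathfrak g_{\mathbb C})$ with polynomially bounded matrix coefficients, the change of variable $y=xg$, and absorption of the shift via submultiplicativity of the norm. The only (harmless) differences are that you estimate the defining seminorms directly rather than an arbitrary continuous seminorm, dispatch $n=-\infty$ by citing Proposition \ref{src-1}(ii) instead of folding it into the general case, and track the exponent as $|n|$ rather than $n$, which is in fact slightly more careful about the sign of $n$ than the paper's displayed computation.
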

 \begin{proof} The proof is similar to the proof of (\cite{W2}, Lemma 11.5.1).
   We remarked above that representation is smooth. Let $\rho$ be the continuous seminorm on
   $\mathcal A_{umg, n}(\Gamma \backslash G)$. Then, since $\rho$ is continuous,
   there exists a constant $C>0$, and  $u_1, \ldots, u_l \in \mathcal U(\mathfrak g_{\mathbb C})$ such that 
   $$
   ||\varphi||_\rho\le C\cdot \left(  ||\varphi||_{u_1, n}+\cdots +  ||\varphi||_{u_l, n}\right), \ \ \varphi\in
   \mathcal A_{umg, n}(\Gamma \backslash G).
   $$
   This is for the case $n>-\infty$. But when $n=-\infty$, the above is true for convenient integer (again denoted by) $n$.
   In this case, we fix such $n$.

   Next, we consider the standard filtration of $\mathcal U(\mathfrak g_{\mathbb C})$ by finite $G$--invariant subspaces:
   $$
   \mathcal U^0(\mathfrak g_{\mathbb C})=\mathbb C \subset \mathcal U^1(\mathfrak g_{\mathbb C}) \subset \mathcal U^2(\mathfrak g_{\mathbb C}) \subset
   \cdots.
   $$

   Let $k\ge 0$. Let $v_1, \ldots v_k$ be the basis of $\mathcal U^k(\mathfrak g_{\mathbb C}) $. Then, there exists smooth functions
   $\eta_{i, j}$ such that
   $$
   Ad(g)v_i=\sum_{j=1}^k \eta_{ij}(g) v_j.
   $$
   Clearly, $\eta_{i, j}$ are matrix coefficients of the representation on   $\mathcal U^k(\mathfrak g_{\mathbb C}) $.
   By the construction of the norm,   there exists $D, r>0$ such that
   $$
   |\eta_{ij}(g)|\le D \cdot ||g||^r, \ \ g\in G,
   $$
   for all $i, j$. 

   We assume that $k$ is large enough so that $u_1, \ldots, u_l \in \mathcal U^k(\mathfrak g_{\mathbb C})$. Then, we can write
   $$
   Ad(g)u_i=\sum_{j=1}^k \nu_{ij}(g) v_j.
   $$
   The functions $\nu_{ij}$ are linear combinations of functions $\eta_{ij}$. Therefore,  there exists $D_1>0$ such that
   $$
   |\nu_{ij}(g)|\le D_1 \cdot ||g||^r, \ \ g\in G,
   $$
   for all $i, j$.

   Now, for $\varphi\in
   \mathcal A_{umg, n}(\Gamma \backslash G)$, and $g\in G$, using  properties of the norm, we have
   \begin{align*}
     ||r(g)\varphi||_\rho& \le C\cdot \sum_{i=1}^l||\varphi||_{u_i, n}= C\cdot \sum_{i=1}^l \sup_{x\in G} \ ||x||^{-n} \left|u_i. r(g)\varphi(x)\right|\\
     &= C\cdot \sum_{i=1}^l \sup_{x\in G} \ ||x||^{-n} \left|r(g) Ad(g^{-1}u_i).\varphi(x)\right|\\
     &= C\cdot \sum_{i=1}^l \sup_{x\in G} \ ||x||^{-n} \left|(Ad(g^{-1})u_i).\varphi(xg)\right|\\
     &\le C\cdot \sum_{i=1}^l \sum_{j=1}^k \left|\nu_{ij}(g^{-1})\right|  \sup_{x\in G} \ ||x||^{-n} \left|v_j.\varphi(xg)\right|\\
     &=C\cdot \sum_{i=1}^l \sum_{j=1}^k \left|\nu_{ij}(g^{-1})\right|  \sup_{x\in G} \ ||xg^{-1}||^{-n} \left|v_j.\varphi(x)\right|\\
     &\le CD_1\cdot \sum_{i=1}^l \sum_{j=1}^k ||g^{-1}||^r  \sup_{x\in G} \ ||xg^{-1}||^{-n} \left|v_j.\varphi(x)\right|\\
     &\le CD_1\cdot \sum_{i=1}^l \sum_{j=1}^k ||g||^{n+r}  \sup_{x\in G} \ ||x||^{-n} \left|v_j.\varphi(x)\right|\\
      &= l CD_1 ||g||^{n+r}   \sum_{j=1}^k  ||\varphi||\\
     \end{align*}
     \end{proof}

 \medskip
 \begin{Lem}\label{srs-22} Let $n\ge -\infty$. Then, the linear functional $\varphi\longmapsto \varphi(1)$ is
   continuous on
   $\mathcal A_{umg, n}(\Gamma \backslash G)$. 
 \end{Lem}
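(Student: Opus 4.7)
The plan is to obtain the required continuity by simply evaluating the defining seminorms at the identity element $1\in G$. Recall that for an integer $n$ the topology on $\mathcal{A}_{umg,n}(\Gamma\backslash G)$ is given by the family $||\cdot||_{u,n}$, $u\in\mathcal{U}(\mathfrak{g}_{\mathbb{C}})$, while for $n=-\infty$ one has $\mathcal{A}_{umg,-\infty}(\Gamma\backslash G)=\mathcal{S}(\Gamma\backslash G)$ with defining seminorms $||\cdot||_{u,-k}$, $u\in\mathcal{U}(\mathfrak{g}_{\mathbb{C}})$, $k\ge 1$. It therefore suffices to dominate $|\varphi(1)|$ by a single one of these seminorms.

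First I would observe that the constant $c:=||1||_{\Gamma\setminus G}$ is finite and satisfies $1\le c\le ||1||$: indeed, $||x||_{\Gamma\setminus G}\ge 1$ for all $x\in G$ by definition, while $1\in\Gamma$ forces $c\le ||1||$. Now take $u=1\in\mathcal{U}(\mathfrak{g}_{\mathbb{C}})$ in the undifferentiated seminorm and evaluate the supremum at the single point $x=1$:
$$
||\varphi||_{1,n}=\sup_{x\in G}||x||^{-n}_{\Gamma\setminus G}\,|\varphi(x)|\;\ge\; c^{-n}\,|\varphi(1)|,
$$
so $|\varphi(1)|\le c^{n}\,||\varphi||_{1,n}$. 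For any integer $n$ this is exactly the continuity estimate needed on $\mathcal{A}_{umg,n}(\Gamma\backslash G)$, since $||\cdot||_{1,n}$ is one of its defining seminorms. For $n=-\infty$, I would apply the same inequality with the seminorm $||\cdot||_{1,-1}$ (any $||\cdot||_{1,-k}$ with $k\ge 1$ works), getting $|\varphi(1)|\le c^{-1}||\varphi||_{1,-1}\le ||\varphi||_{1,-1}$ on $\mathcal{S}(\Gamma\backslash G)$.

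There is no serious obstacle here; the argument is a one-line substitution into the sup defining one seminorm. The only points worth flagging are that the proof uses the undifferentiated element $u=1$ of the enveloping algebra (no higher derivatives are needed) and that the constant $c$ depends only on the fixed norm on $G$, not on $\varphi$, so the estimate is uniform in $\varphi$ and the treatment of all cases $n\ge -\infty$ is essentially identical.
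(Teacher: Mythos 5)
Your proof is correct and follows essentially the same route as the paper: evaluate the undifferentiated seminorm $||\cdot||_{1,n}$ at the single point $x=1$ and use that $||1||_{\Gamma\setminus G}$ is a fixed finite constant (in fact it equals $1$, by property (4) of the norm, which is what the paper implicitly uses). The treatment of $n=-\infty$ via one of the seminorms $||\cdot||_{1,-k}$ also matches the paper's reduction of that case to the integer case.
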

 \begin{proof} Assume first that $n>-\infty$. Then, we have
   $$
   |\varphi(1)|=||1|^{-n}_{\Gamma \backslash G} |\varphi(1)|\le ||\varphi||_{1, -n},
   $$
   for 
   $\varphi\in \mathcal A_{umg, n}(\Gamma \backslash G)$. The case $n=-\infty$ is a consequence of above inequalities.
   \end{proof}

 \medskip

 \begin{Lem}\label{srs-23} 
   Let $\Gamma\subset G$ be a discrete subgroup.
   Let $\lambda$ be a continuous functional on $\mathcal H^\infty$ which is $\Gamma$--invariant.
   For any integer $n$ such that $n> d_{\pi, \lambda}$,   the map which assigns  to
   $h\in\mathcal H^\infty$ a function 
 $g\longmapsto \lambda(\pi(g)h)$ in  $\mathcal A_{umg, n}(\Gamma \backslash G)$
  is continuous, $G$--equivariant, and if $\lambda\neq 0$, then  it is an embedding. Moreover, the same holds if
   $n=d_{\pi, \lambda}=-\infty$.
   \end{Lem}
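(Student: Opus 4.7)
The plan is to mimic the argument used in Proposition~\ref{srs-0}(i)--(ii), but tracking the filtration index $n$ carefully so that the image lands in $\mathcal{A}_{umg,n}(\Gamma\backslash G)$ rather than in the full Garding space. Set $\varphi_h(g)=\lambda(\pi(g)h)$. Since $n>d_{\pi,\lambda}$, Definition~\ref{srs-d1} (together with the observation following it that the estimate persists for all larger $d$) yields a continuous seminorm $\kappa_n$ on $\mathcal{H}^\infty$ with
$$|\lambda(\pi(g)h')|\le \|g\|^n\kappa_n(h'),\qquad g\in G,\ h'\in\mathcal{H}^\infty.$$
Right-differentiating gives $u.\varphi_h(g)=\lambda(\pi(g)\pi(u)h)$ for $u\in\mathcal{U}(\mathfrak{g}_{\mathbb{C}})$; applying the above with $h'=\pi(u)h$ and using the $\Gamma$-invariance of $\lambda$ (replace $g$ by $\gamma g$ for $\gamma\in\Gamma$ and take the infimum, exactly as in the first paragraph of the proof of Proposition~\ref{srs-0}) yields
$$|u.\varphi_h(g)|\le \|g\|_{\Gamma\backslash G}^{n}\,\kappa_n(\pi(u)h).$$
Hence $\|\varphi_h\|_{u,n}\le \kappa_n(\pi(u)h)<\infty$, which places $\varphi_h$ in $\mathcal{A}_{umg,n}(\Gamma\backslash G)$. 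Because $\pi(u)$ is continuous on $\mathcal{H}^\infty$, the expression $h\mapsto\kappa_n(\pi(u)h)$ is itself a continuous seminorm on $\mathcal{H}^\infty$, so the bound shows the map $h\mapsto\varphi_h$ is continuous into $\mathcal{A}_{umg,n}(\Gamma\backslash G)$.

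The $G$-equivariance is immediate from $\varphi_{\pi(g_0)h}(g)=\lambda(\pi(gg_0)h)=\varphi_h(gg_0)=(r(g_0)\varphi_h)(g)$. For the embedding assertion when $\lambda\ne 0$, note that the kernel of $h\mapsto\varphi_h$ is a closed $G$-invariant subspace of $\mathcal{H}^\infty$; by Lemma~\ref{srs-02} the Fr\'echet representation $\mathcal{H}^\infty$ is irreducible, and the kernel is a proper subspace since $\varphi_h(1)=\lambda(h)$ does not vanish identically. Therefore the kernel is $\{0\}$ and the map is injective.

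Finally, in the case $n=d_{\pi,\lambda}=-\infty$ the target space is $\mathcal{A}_{umg,-\infty}(\Gamma\backslash G)=\mathcal{S}(\Gamma\backslash G)$, so one must verify $\|\varphi_h\|_{u,-m}<\infty$ for every $u\in\mathcal{U}(\mathfrak{g}_{\mathbb{C}})$ and every integer $m\ge 1$. But $d_{\pi,\lambda}=-\infty$ makes the hypothesis of the previous step available for \emph{every} real $d$; applying it with $d=-m$ gives $\|\varphi_h\|_{u,-m}\le \kappa_{-m}(\pi(u)h)<\infty$, and the same inequality yields continuity for each defining seminorm of $\mathcal{S}(\Gamma\backslash G)$. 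The $G$-equivariance and injectivity arguments go through unchanged. I do not foresee any real obstacle: the substantive work was already carried out in Definition~\ref{srs-d1} and in the proof of Proposition~\ref{srs-0}, and this lemma simply refines the conclusion along the filtration by choosing $d$ optimally.
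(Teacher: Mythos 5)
Your argument is correct and follows essentially the same route as the paper: the same estimate $|\lambda(\pi(g)\pi(u)h)|\le \|g\|_{\Gamma\backslash G}^{\,d}\,\kappa(\pi(u)h)$ obtained from Definition~\ref{srs-d1} and the $\Gamma$--invariance of $\lambda$, with the paper leaving the remaining deductions as ``the lemma easily follows.'' Your explicit treatment of continuity, equivariance, injectivity via Lemma~\ref{srs-02}, and the $n=-\infty$ case merely fills in those details consistently with Proposition~\ref{srs-0}.
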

 \begin{proof} By definition of $d_{\pi, \lambda}$ (see Definition \ref{srs-d1}) and the fact that $||x||\ge 1$ for all $x\in G$, we have
   (see (\ref{srs-01}))
   $$
  \left|\lambda\left(\pi(g)h\right)\right|\le ||g||^d \kappa(h), \ \ g\in G, \ \ h\in \mathcal H^\infty.
  $$
  The semi--norm $h\longmapsto \kappa(\pi(u)h)$ is again continuous, for  $u\in \mathcal U(\mathfrak g_{\mathbb C})$, and we have
  as a consequence of above inequality
  $$
  \left|\lambda\left(\pi(g)\pi(u)h\right)\right|\le ||g||^d \kappa(\pi(u)h), \ \ g\in G, \ \ h\in \mathcal H^\infty.
  $$
  For $\gamma\in \Gamma$, the $\Gamma$--invariance of $\lambda$ implies that
  $$
  \left|\lambda\left(\pi(g)\pi(u)h\right)\right|=\left|\lambda\left(\pi(\gamma g)\pi(u)h\right)\right|
  \le ||\gamma g||^d \kappa(\pi(u)h)
  $$
  Since the norm is continuous and $\Gamma$ discrete, for fixed $x\in G$, there exists $\gamma_0\in \Gamma$ such that
  $$
  ||\gamma_0 x||=||x||_{\Gamma \backslash G}=\inf_{\gamma\in \Gamma} ||\gamma x||.
  $$
  Thus above inequality implies
$$
  \left|\lambda\left(\pi(g)\pi(u)h\right)\right|
  \le ||g||^d_{\Gamma\backslash G} \kappa(\pi(u)h).
  $$
  This implies that
  $$
 \sup_{g\in G} \  ||g||^{-d}_{\Gamma\backslash G}\left|\lambda\left(\pi(g)\pi(u)h\right)\right|
  \le \kappa(\pi(u)h).
  $$
  Now, the lemma easily follows.
     \end{proof}

 \vskip .2in
 Now, we prove the main result of the present section.
 \vskip .2in
 
\begin{Thm}\label{srs-29}
  Let $\mathcal V \subset \mathcal S(\Gamma\backslash G)'$ be a closed irreducible admissible subrepresentation of
  $\mathcal S(\Gamma\backslash G)'$. Then, there exists an irreducible admissible representation of $G$  acting on the Hilbert space
  $\mathcal H$, and  a non--zero $\Gamma$--invariant continuous functional on $\mathcal H^\infty$ such that
  $$
  \mathcal V= Cl\left(\alpha_{\lambda, \Gamma}\left(\mathcal H^\infty\right) \right).
  $$
\end{Thm}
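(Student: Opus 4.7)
The plan is to reverse the construction of Proposition \ref{srs-0}: recover a Harish--Chandra module from $\mathcal V$, pick a Hilbert model $(\pi, \mathcal H)$, and transport the evaluation-at-$1$ functional back via the Casselman--Wallach theorem. First let $\mathcal V_K = \bigoplus_{\delta \in \hat{K}} E_\delta(\mathcal V)$ be the $K$-finite subspace; admissibility of $\mathcal V$ makes each $E_\delta(\mathcal V)$ finite-dimensional, so $\mathcal V_K$ is a $(\mathfrak g, K)$-module, and the Fourier expansion argument used in Lemma \ref{srs-02} shows it is dense in $\mathcal V$. Irreducibility of $\mathcal V$ forces $\mathcal V_K$ to be irreducible as a $(\mathfrak g, K)$-module: a proper nonzero submodule $W \subset \mathcal V_K$ would satisfy $E_\delta(W) \subsetneq E_\delta(\mathcal V)$ for some $\delta$, and since this finite-dimensional subspace is closed, the closure of $W$ in $\mathcal V$ is a proper closed nonzero $G$-invariant subspace, contradicting irreducibility. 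By Casselman's subrepresentation theorem, $\mathcal V_K \cong \mathcal H_K$ for some irreducible admissible Hilbert representation $(\pi, \mathcal H)$ of $G$; fix a $(\mathfrak g, K)$-isomorphism $\iota : \mathcal H_K \to \mathcal V_K$.

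By Lemma \ref{src-4}, $\mathcal V_K$ lies in the G\aa{}rding space $\mathcal A_{umg}(\Gamma\backslash G)$, so its elements are smooth, left $\Gamma$-invariant functions on $G$. Because $\mathcal V_K$ is $\mathcal U(\mathfrak g_{\mathbb C})$-generated by the finite-dimensional $K$-span of any nonzero vector $\varphi_0$, and each $\mathcal A_{umg, n}(\Gamma\backslash G)$ is $\mathcal U(\mathfrak g_{\mathbb C})$-stable, there is an integer $n_0$ with $\mathcal V_K \subset \mathcal A_{umg, n_0}(\Gamma\backslash G)$. By Lemma \ref{srs-24}, this is a smooth Fr\'echet representation of moderate growth, so the Casselman--Wallach globalization theorem extends $\iota$ uniquely to a continuous $G$-equivariant map
$$
\beta : \mathcal H^\infty \longrightarrow \mathcal A_{umg, n_0}(\Gamma\backslash G) \hookrightarrow \mathcal S(\Gamma\backslash G)'.
$$
By Lemma \ref{srs-02}, $\mathcal H^\infty$ is irreducible, and since $\beta$ extends the nonzero map $\iota$, $\beta$ is injective. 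Set $\lambda(h) := \beta(h)(1)$. This is continuous on $\mathcal H^\infty$ by Lemma \ref{srs-22}; the $G$-equivariance of $\beta$ combined with left $\Gamma$-invariance of $\beta(h)$ yields, for $\gamma \in \Gamma$, $\lambda(\pi(\gamma)h) = (r(\gamma)\beta(h))(1) = \beta(h)(\gamma) = \beta(h)(1) = \lambda(h)$, and $\lambda \neq 0$ since injectivity of $\beta$ together with the identity $\lambda(\pi(g)h) = \beta(h)(g)$ ensures some $\beta(h)$ attains a nonzero value.

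Finally, the identity $\lambda(\pi(g)h) = \beta(h)(g)$ and the G\aa{}rding-space inclusion $\mathcal A_{umg}(\Gamma\backslash G) \hookrightarrow \mathcal S(\Gamma\backslash G)'$ show that the defining integral for $\alpha_{\lambda,\Gamma}(h)$ coincides with the pairing giving $\beta(h)$; hence $\alpha_{\lambda,\Gamma}(h) = \beta(h)$ in $\mathcal S(\Gamma\backslash G)'$. Thus $\alpha_{\lambda,\Gamma}(\mathcal H^\infty) = \beta(\mathcal H^\infty)$ contains $\iota(\mathcal H_K) = \mathcal V_K$, which is dense in $\mathcal V$, while density of $\mathcal H_K$ in $\mathcal H^\infty$ and continuity of $\beta$ into $\mathcal S(\Gamma\backslash G)'$ force $\beta(\mathcal H^\infty) \subset \mathcal V$; taking closures gives $Cl(\alpha_{\lambda,\Gamma}(\mathcal H^\infty)) = \mathcal V$. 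The main obstacle is the middle step: confirming that Casselman--Wallach can be applied with target $\mathcal A_{umg, n_0}(\Gamma\backslash G)$, which requires pinning the finitely generated $\mathcal V_K$ inside a single Fr\'echet layer of the filtration, and then verifying that the extended $\beta$ actually lands inside the closed subspace $\mathcal V$ rather than merely in the ambient strong dual $\mathcal S(\Gamma\backslash G)'$.
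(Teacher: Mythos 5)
Your proposal is correct and follows essentially the same route as the paper's proof: extract the irreducible admissible $(\mathfrak g, K)$--module $\mathcal V_K$, place it inside a single layer $\mathcal A_{umg, n_0}(\Gamma\backslash G)$ of the filtration, invoke the Casselman--Wallach globalization theorem to extend the infinitesimal isomorphism to a continuous $G$--map on $\mathcal H^\infty$, and take $\lambda$ to be evaluation at the identity. The only point you under-justify is why $\mathcal V_K$ lies in the Garding space in the first place --- Lemma \ref{src-4} merely identifies that space with $\mathcal A_{umg}(\Gamma\backslash G)$; the paper supplies the needed step by observing that each $\varphi\in\mathcal V_K$ is $\mathcal Z(\mathfrak g_{\mathbb C})$--finite and $K$--finite, hence satisfies $r'(\alpha)\varphi=\varphi$ for some $\alpha\in C_c^\infty(G)$ by Harish--Chandra's theorem, which places $\varphi$ in the Garding space.
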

\begin{proof} By (\cite{hc}, Lemma 4), $\mathcal V^\infty \cap \mathcal V_K$ is dense in $\mathcal V$. Since $\mathcal V$ is admissible, we see
  that $\mathcal V_K\subset \mathcal V^\infty$. It is easy to check that $ \mathcal V_K$  is an irreducible $(\mathfrak g, K)$--module.
  In particular, every vector in $ \mathcal V_K$ is $\mathcal Z(\mathfrak g_{\mathbb C})$--finite. Therefore, by (\cite{hc}, Theorem 1), for each
  $\varphi\in \mathcal V_K$ there exists $\alpha\in C_c^\infty(G)$ such that $r'(\alpha)\varphi=\varphi$. Hence, $\mathcal V_K$ belongs to the
  Garding space of $\mathcal V$, and consequently to the Garding space of $\mathcal S(\Gamma\backslash G)'$ which is
  $\mathcal A_{umg}(\Gamma\backslash G)$. By means of the Casselman subrepresentation theorem, we can find an infinitesimal embedding  of
  $\mathcal V_K$ into a principal series of $G$. In this way, we obtain a globalization of $\mathcal V_K$ i.e., there exists an irreducible
  admissible representation $(\pi, \mathcal H)$ on the Hilbert space $\mathcal H$ infinitesimally equivalent to    $\mathcal V_K$. Let us fix
  an isomorphism $\eta: \mathcal H_K\longrightarrow \mathcal V_K$.

  We recall the filtration of $\mathcal A_{umg}(\Gamma\backslash G)$ by the representations of moderate growth (see Lemma \ref{srs-24}) :
  $$
  \mathcal A_{umg, 1}(\Gamma\backslash G)\subset \mathcal A_{umg, 2}(\Gamma\backslash G)\subset
  \mathcal A_{umg,3}(\Gamma\backslash G) \subset \cdots.
  $$
  This is also filtration of   $\mathcal U(\mathfrak g_{\mathbb C})$--modules. Since $\mathcal V_K$ is irreducible, there exists $n\ge 1$
  such that
  $$
  \mathcal V_K \subset   \mathcal A_{umg, n}(\Gamma\backslash G).
  $$
  Let $V_n$ be the closure of $\mathcal V_K$ in  $\mathcal A_{umg, n}(\Gamma\backslash G)$. It is obvious that
  a $(\mathfrak g, K)$--module on the space of
 $K$--finite vectors  in $W_n$ is $\mathcal V_K$. Therefore, $V_n$ is irreducible.
 We remark that $V_n$  being a closed subrepresentation of a
 representation of moderate growth $\mathcal A_{umg, n}(\Gamma\backslash G)$ is also a representation of moderate growth
 (see Lemma \ref{srs-24}; \cite{W2}, Lemma 11.5.2). But $\mathcal H^\infty$
 is also a representation of moderate growth (\cite{W2}, Lemma 11.5.1) and irreducible (see Lemma \ref{srs-02}).
 So,  the isomorphism  $\eta: \mathcal H_K\longrightarrow \left(W_n\right)_K=\mathcal V_K$, extends to a continuous isomorphism
 of $G$--representations $\eta: \mathcal H^\infty\longrightarrow V_n$ applying (\cite{W2}, Theorem 11.5.1).

 Now, the required linear functional is
 $$
 \lambda(h)\overset{def}{=}\eta(h)(1).
 $$
 Indeed, it is obviously continuous (see Lemma \ref{srs-22}). Next, it is $\Gamma$--invariant since
 $$
 \lambda(\pi(\gamma)h)=\eta(\pi(\gamma)h)(1)=r(\gamma)\eta(h)(1)=\eta(h)(\gamma)=\eta(h)(1), \ \ h\in\mathcal H^\infty, \ \ \gamma\in \Gamma.
 $$
 Now, using the notation introduced in Proposition \ref{srs-0}, we compute
 $$
 \lambda(\pi(g)h)=\eta(\pi(g)h)(1)=r(g)\eta(h)(1)=\eta(h)(g),  \ \ h\in\mathcal H^\infty, \ \ g \in G.
$$ 
 For $h\in \mathcal H_K$, above computation and Proposition \ref{srs-0} (ii) implies that
 $$
 \alpha_{\lambda, \Gamma}\left(\mathcal H_K\right)=\mathcal V_K.
 $$
 The proof of Proposition \ref{srs-0} shows that the space of $K$--finite vectors of
 $Cl\left(\alpha_{\lambda, \Gamma}\left(\mathcal H^\infty\right) \right)$  is $ \alpha_{\lambda, \Gamma}\left(\mathcal H_K\right)$ and it is dense in
 $Cl\left(\alpha_{\lambda, \Gamma}\left(\mathcal H^\infty\right) \right)$. Since $\mathcal V_K$ is dense in
 $\mathcal V_K$, the theorem follows.
  \end{proof}

\vskip .2in
Examples of subrepresentation can explicitly be constructed using Eisenstein series \cite{langlands},  or be shown to exists using
Poincar\' e series \cite{MuicSMOOTH}, or the trace formula (\cite{arthur}, \cite{arthur-1}). 
Now, we show that there are no finite--dimensional representations except the trivial representation in
$\mathcal S(\Gamma\backslash G)'$ under appropriate assumptions.
 \vskip .2in
 
 \begin{Thm}\label{srs-30} We maintain Assumption \ref{paf-00}. Then, if $G$ has no compact components, then the trivial representation
   is the only finite--dimensional subrepresentation of $\mathcal S(\Gamma\backslash G)'$.
   \end{Thm}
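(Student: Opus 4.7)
The plan is to first reduce to the case of an irreducible closed subrepresentation. If $\mathcal{V}\subset \mathcal{S}(\Gamma\backslash G)'$ is a finite-dimensional closed $G$-invariant subspace, then the differentiated action is a finite-dimensional representation of the semisimple Lie algebra $\mathfrak{g}$, hence completely reducible by Weyl's theorem; since $G$ is connected, the resulting decomposition is $G$-equivariant. So we may assume $\mathcal{V}$ is irreducible, in which case it is trivially closed and admissible.

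Next I would invoke Theorem \ref{srs-29} to produce an irreducible admissible Hilbert space representation $(\pi,\mathcal{H})$ and a nonzero continuous $\Gamma$-invariant functional $\lambda$ on $\mathcal{H}^\infty$ with $\mathcal{V}=Cl\left(\alpha_{\lambda,\Gamma}(\mathcal{H}^\infty)\right)$. By Proposition \ref{srs-0}(iii), $\alpha_{\lambda,\Gamma}$ is an injection with dense image; dense in a finite-dimensional space means equal, so $\alpha_{\lambda,\Gamma}(\mathcal{H}^\infty)=\mathcal{V}$ is finite-dimensional, and hence so is $\mathcal{H}^\infty$. Then $\mathcal{H}_K=\mathcal{H}^\infty=\mathcal{H}$, and $\pi$ becomes a finite-dimensional irreducible continuous representation of $G$ carrying a nonzero $\Gamma$-invariant functional $\lambda$.

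The heart of the argument is Borel's density theorem. Under Assumption \ref{paf-00} with $G$ having no compact factors and $\Gamma$ of finite covolume, Borel's theorem says $\Gamma$ is Zariski dense in $\mathcal{G}$. A finite-dimensional continuous representation of a connected semisimple Lie group is automatically algebraic on a suitable isogenous cover of $\mathcal{G}$, so the stabilizer in $G$ of $\lambda\in\mathcal{H}^*$ under the contragredient action is Zariski closed. It contains $\Gamma$, hence by Zariski density it equals all of $G$. Thus $\lambda$ is $G$-invariant, i.e.\ $\lambda\colon\mathcal{H}\to\mathbb{C}$ is a nonzero $G$-equivariant map into the trivial representation. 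Its kernel is a proper $G$-invariant subspace of $\mathcal{H}$, so by irreducibility of $\pi$ the kernel is zero and $\mathcal{H}$ is one-dimensional. A one-dimensional continuous representation of a connected semisimple Lie group is trivial, so $\pi$, and hence $\mathcal{V}$, is the trivial representation.

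The main obstacle I anticipate is the algebraicity step used to invoke Borel density: one must justify that the finite-dimensional continuous representation $\pi$ of $G$ is algebraic in the sense needed for the Zariski density of $\Gamma$ to bite. This is standard, since any finite-dimensional representation of a connected semisimple Lie group factors through an algebraic representation of a finite cover of $\mathcal{G}$ and Zariski density survives passage to an isogenous group, but it needs to be stated carefully so as not to assume more than what Assumption \ref{paf-00} provides.
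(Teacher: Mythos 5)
Your proof is correct and follows essentially the same route as the paper: reduce to an irreducible $\mathcal V$, apply Theorem \ref{srs-29} to obtain $(\pi,\mathcal H)$ with a nonzero $\Gamma$--invariant functional $\lambda$, observe that $\mathcal H=\mathcal H^\infty$ is finite--dimensional, and use the Borel density theorem to promote $\Gamma$--invariance of $\lambda$ to $G$--invariance, forcing $\mathcal H$ to be one--dimensional and trivial. The only (welcome) additions are your explicit reduction to the irreducible case via Weyl's complete reducibility, which the paper leaves implicit when it applies Theorem \ref{srs-29} directly to $\mathcal V$, and your care about algebraicity of $\pi$ up to isogeny; the paper phrases the density step dually, via a $\Gamma$--invariant vector in $\mathcal H'$ rather than the stabilizer of $\lambda$, but this is the same argument.
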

 \begin{proof} Let $\mathcal V\subset \mathcal S(\Gamma\backslash G)'$ be a finite--dimensional subrepresentation. Then, by Theorem \ref{srs-29},
   there exists finite dimensional representation $\mathcal H$ (satisfying the assumption of the first paragraph of this section) such that
   $$
  \mathcal V= Cl\left(\alpha_{\lambda, \Gamma}\left(\mathcal H^\infty\right) \right)= \alpha_{\lambda, \Gamma}\left(\mathcal H^\infty\right).
  $$
  Since  $\mathcal H$ is finite--dimensional, we have  $\mathcal H^\infty=\mathcal H$, and there is a non--zero $\Gamma$--invariant functional
  on $\mathcal H$. So, the algebraic dual $\mathcal H'$ is smooth irreducible representation of $G$ having a non--zero $\Gamma$--invariant
  vector. By  general theory, $\mathcal H'$   is a restriction of an algebraic (holomorphic) representation of $\mathcal G(\mathbb C)$ to $G$.
  But the Borel density theorem \cite{Borel1960} implies that $\Gamma$ is Zariski dense in $\mathcal G(\mathbb C)$. Because of that
  a $\Gamma$--invariant vector is also $\mathcal G(\mathbb C)$--invariant. In particular, it is $G$--invariant. But $\mathcal H'$ is
  an irreducible representation of $G$. Hence, $\mathcal H'$ is one--dimensional and $G$ acts trivially. Thus, the same holds for
  $\mathcal H$ and consequently for $\mathcal V$. 
 \end{proof}

 \vskip .2in
 The most important consequence of Theorem \ref{srs-30} is the following corollary:
 
 \begin{Cor}\label{srs-31} We maintain Assumption \ref{paf-00}. Then, if $G$ has no compact components, then the trivial representation
   is the only finite--dimensional subrepresentation of a $(\mathfrak g, K)$--module $\mathcal A(\Gamma\backslash G)$ (defned in Section
   \ref{paf}).
   \end{Cor}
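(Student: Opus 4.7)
The strategy is to take an arbitrary finite-dimensional $(\mathfrak g, K)$-submodule $V\subset \mathcal A(\Gamma\backslash G)$, realize it as a closed $G$-invariant subspace of $\mathcal S(\Gamma\backslash G)'$, and then invoke Theorem \ref{srs-30}. There are three small preparations, after which the corollary follows in one line.

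\textbf{Step 1 (promote $V$ to a $G$-submodule).} First I would upgrade the $(\mathfrak g, K)$-action on $V$ to a genuine $G$-action. Because $V$ is finite-dimensional, each $X\in \mathfrak g$ acts on $V$ by a linear endomorphism. For $v\in V$ and $X\in \mathfrak g$, the smooth curve $\phi(t)=r(\exp tX)v$ satisfies the linear ordinary differential equation $\phi'(t)=X\cdot \phi(t)$ with $\phi(0)=v$ inside the finite-dimensional space $V$, so $\phi(t)\in V$ for all $t\in\mathbb R$. Since $G$ is connected (Assumption \ref{paf-00}), every element of $G$ is a product of exponentials, hence $V$ is right-$G$-invariant.

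\textbf{Step 2 (embed $V$ in $\mathcal S(\Gamma\backslash G)'$).} By Lemma \ref{src-3} we have $V\subset \mathcal A(\Gamma\backslash G)\subset \mathcal A_{umg}(\Gamma\backslash G)$, and by Lemma \ref{src-4} the space $\mathcal A_{umg}(\Gamma\backslash G)$ is the Garding space of $\mathcal S(\Gamma\backslash G)'$, in particular a $G$-stable subspace. Being finite-dimensional in a Hausdorff locally convex space, $V$ is automatically closed in $\mathcal S(\Gamma\backslash G)'$. So $V$ is a closed finite-dimensional $G$-invariant subrepresentation.

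\textbf{Step 3 (apply Theorem \ref{srs-30}).} Since both Assumption \ref{paf-00} and the hypothesis that $G$ has no compact components are in force, Theorem \ref{srs-30} forces $G$ to act trivially on $V$. Equivalently, every $f\in V$ is right-$G$-invariant, and combined with left $\Gamma$-invariance this means $f$ is constant on $G$. Thus $V\subset \mathbb C\cdot \mathbf{1}$, proving that the only finite-dimensional $(\mathfrak g, K)$-submodule of $\mathcal A(\Gamma\backslash G)$ is the trivial one. I do not expect a serious obstacle: the only non-formal step is the integration argument in Step 1, and it is entirely standard.
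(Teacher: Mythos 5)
Your proof is correct and follows the route the paper intends: Corollary \ref{srs-31} is stated there as an immediate consequence of Theorem \ref{srs-30} with no written proof, and your Steps 1--3 supply exactly the bridge the paper leaves implicit, namely integrating the finite--dimensional $(\mathfrak g, K)$--action to a $G$--action and embedding $V$ as a closed finite--dimensional $G$--subrepresentation of $\mathcal S(\Gamma\backslash G)'$ before invoking the theorem. The only cosmetic point is that the ODE argument in Step 1 is circular as literally phrased (one does not yet know $\phi(t)\in V$); the standard fix is to let $X_V$ denote the endomorphism of $V$ induced by $X$ and check that $t\longmapsto r(\exp(-tX))\,e^{tX_V}v$ has vanishing derivative, which gives $r(\exp tX)v=e^{tX_V}v\in V$ and does not affect correctness.
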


\section{Results on $L^2(\Gamma \backslash G)$}\label{srs-nas}

\medskip

In this section we continue with the assumptions of previous Section  \ref{srs}. The reader should review 
the second paragraph of Section \ref{srs}.

\medskip

We consider the usual embedding $\mathcal A_{umg}(\Gamma\backslash G)\hookrightarrow \mathcal S(\Gamma\backslash G)'$, given by
$\varphi\longmapsto \beta_\varphi$ where $\beta_\varphi$ is defined by $\beta_\varphi(f)=\int_{\Gamma \backslash G}
\varphi(x)f(x)dx$, for $f\in \mathcal S(\Gamma\backslash G)$.

\vskip .2in
\begin{Lem}\label{srs-28} We equip the space of smooth vectors $\left(\mathcal S(\Gamma\backslash G)'\right)^\infty$
  with the usual  topology (described in the proof below). Let $n\ge -\infty$. Then, the 
   embedding $\mathcal A_{umg, n}(\Gamma \backslash G)\hookrightarrow \left(\mathcal
   S(\Gamma\backslash G)'\right)^\infty$,
   given by $\varphi\longmapsto \beta_\varphi$,  is $G$--equivariant and continuous.
 \end{Lem}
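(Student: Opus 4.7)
The plan is to unwind the definition of the topology on $\left(\mathcal S(\Gamma\backslash G)'\right)^\infty$ (generated by the seminorms $\alpha\longmapsto \|r'(u)\alpha\|_B$ with $u\in\mathcal U(\mathfrak g_{\mathbb C})$ and $B\subset\mathcal S(\Gamma\backslash G)$ bounded) and bound each such seminorm applied to $\beta_\varphi$ by $\|\varphi\|_{u,n}$ times a constant independent of $\varphi$. The $G$-equivariance is a direct change-of-variables computation, using that $G$ is unimodular: for $\varphi\in\mathcal A_{umg,n}(\Gamma\backslash G)$ and $f\in\mathcal S(\Gamma\backslash G)$,
$$
(r'(g)\beta_\varphi)(f)=\int_{\Gamma\backslash G}\varphi(x)f(xg^{-1})\,dx=\int_{\Gamma\backslash G}\varphi(xg)f(x)\,dx=\beta_{r(g)\varphi}(f),
$$
and iterating gives $r'(u)\beta_\varphi=\beta_{u.\varphi}$ for all $u\in\mathcal U(\mathfrak g_{\mathbb C})$.

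Next I would establish the pointwise estimate. For finite $n$, the very definition of $\mathcal A_{umg,n}(\Gamma\backslash G)$ gives $|u.\varphi(x)|\le\|\varphi\|_{u,n}\cdot\|x\|_{\Gamma\backslash G}^{n}$ for all $x\in G$. Using Lemma \ref{paf-0} together with (\cite{casselman-1}, Proposition 1.9) (as applied in the proof of Proposition \ref{srs-0}), one has $\int_{\Gamma\backslash G}\|x\|_{\Gamma\backslash G}^{-d_0}dx<\infty$ for all sufficiently large $d_0$. Fixing an integer $N$ with $N-n\ge d_0$, I obtain for $f\in\mathcal S(\Gamma\backslash G)$:
$$
|\beta_{u.\varphi}(f)|\le\|\varphi\|_{u,n}\cdot\|f\|_{1,-N}\cdot\int_{\Gamma\backslash G}\|x\|_{\Gamma\backslash G}^{\,n-N}dx=C\cdot\|\varphi\|_{u,n}\cdot\|f\|_{1,-N},
$$
with $C<\infty$.

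For any bounded $B\subset\mathcal S(\Gamma\backslash G)$, the quantity $M_B:=\sup_{f\in B}\|f\|_{1,-N}$ is finite, so
$$
\|r'(u)\beta_\varphi\|_B=\sup_{f\in B}|\beta_{u.\varphi}(f)|\le CM_B\,\|\varphi\|_{u,n},
$$
which is a bound by a continuous seminorm on $\mathcal A_{umg,n}(\Gamma\backslash G)$. To check that $\beta_\varphi$ is genuinely a smooth vector, I would invoke Lemma \ref{srs-24}: the right regular representation on $\mathcal A_{umg,n}(\Gamma\backslash G)$ is smooth, so $g\longmapsto r(g)\varphi$ is a smooth map into $\mathcal A_{umg,n}(\Gamma\backslash G)$; composing with the continuous linear map $\psi\longmapsto\beta_\psi$ into $\mathcal S(\Gamma\backslash G)'$ shows that $g\longmapsto r'(g)\beta_\varphi=\beta_{r(g)\varphi}$ is smooth, as needed. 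The case $n=-\infty$ reduces to the finite case, because $\mathcal S(\Gamma\backslash G)$ embeds continuously into $\mathcal A_{umg,m}(\Gamma\backslash G)$ for every finite $m$.

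I do not expect a substantive obstacle; the one point that needs care is the choice of $N$, which must be coupled both to $n$ and to the Casselman integrability threshold $d_0$, and one must quote the integrability statement on the quotient $\Gamma\backslash G$ rather than on $G$.
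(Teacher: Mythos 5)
Your proposal is correct and follows essentially the same route as the paper: the same seminorm estimate $\left\|r'(u)\beta_\varphi\right\|_B\le C M_B\,\|\varphi\|_{u,n}$ obtained from the pointwise bound on $u.\varphi$, the integrability of $\|x\|_{\Gamma\backslash G}^{-d_0}$ on $\Gamma\backslash G$ from Casselman, and the reduction of the case $n=-\infty$ to a finite $n$. The extra details you supply (the change-of-variables check of equivariance and the verification via Lemma \ref{srs-24} that $\beta_\varphi$ is genuinely a smooth vector) are routine points the paper leaves implicit, not a different method.
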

\begin{proof} Recall that  the space of smooth vectors $\left(\mathcal S(\Gamma\backslash G)'\right)^\infty$ in
  $\mathcal S(\Gamma\backslash G)'$
  is a complete locally convex representation of $G$ where topology is defined by the semi--norms:
$$
\alpha\longmapsto ||r'(u)\alpha||_B,
$$
where $u\in  \mathcal U(\mathfrak g_{\mathbb C})$ and $B\subset \mathcal S(\Gamma \backslash G)$ is bounded.

Let $u\in  \mathcal U(\mathfrak g_{\mathbb C})$ and let $B\subset \mathcal S(\Gamma \backslash G)$ be a  bounded set.
Let $\varphi\in \mathcal A_{umg, n}(\Gamma \backslash G)$. Then, assuming that in the computation below $n$ means any integer if
originally we have $n=-\infty$,
$$
\sup_{f\in B} \left|r'(u)\beta_\varphi(f)\right|=\sup_{f\in B} \left|\int_{\Gamma\backslash G} u.\varphi(x)f(x)dx\right|
$$
We note again (\cite{casselman-1}, Proposition 1.9) implies that $\int_{\Gamma\backslash G}||g|^{-d_0}_{\Gamma\backslash G} dg<\infty$ for
all sufficiently large $d_0>0$.  Let $M_B=\sup_{f\in B} \ ||f||_{1,-d_0}<\infty$. Hence
$$
\sup_{f\in B} \left|r'(u)\beta_\varphi(f)\right| \le \left(M_B \cdot \int_{\Gamma\backslash G}||g|^{n-d_0}_{\Gamma\backslash G} dg\right)
||u.\varphi||_{u, n}. 
$$
The continuity of the map easily follows. The map is obviously $G$--equivariant.
\end{proof}

\vskip .2in

  We recall the classical and well--known argument in our settings. 
Let $X\in \mathfrak g$. Then, for $F\in L^1(\Gamma \backslash G)\cap C^\infty(G)$ and $f\in \mathcal
S(\Gamma\backslash G)$, we have
\begin{align*}
  &\int_{\Gamma\backslash G} X.F(x)f(x)dx=  \\
  &=
\int_{\Gamma\backslash G} \frac{d}{dt}|_{t=0} F(x\exp{(tX)})f(x)dx\\
&=
\int_{\Gamma\backslash G} \frac{d}{dt}|_{t=0}\left( F(x\exp{(tX)}) f(x\exp{(tX)})\right)dx-
\int_{\Gamma\backslash G} F(x) \frac{d}{dt}|_{t=0} f(x\exp{(tX)})dx\\
&=
\int_{\Gamma\backslash G} \frac{d}{dt}|_{t=0}\left( F(x) f(x)\right)dx-
\int_{\Gamma\backslash G} F(x) \frac{d}{dt}|_{t=0} f(x\exp{(tX)})dx\\
&=- \int_{\Gamma\backslash G} F(x) \frac{d}{dt}|_{t=0} f(x\exp{(tX)})dx.
\end{align*}

\vskip .2in
The map $\mathfrak g\longrightarrow \mathfrak g$, given by $X\longmapsto -X$. 
This extends to a $\mathbb C$--linear
anti-automorphism $u\longmapsto u^{\#}$ of $\mathcal U(\mathfrak g_{\mathbb C})$
which satisfies   
\begin{equation}\label{srs-19}
\int_{\Gamma\backslash G} u.F(x)\ f(x)dx=\int_{\Gamma\backslash G} F(x)\ u^\#.f(x)dx
\end{equation}

\vskip .2in
 Since  $\mathcal S(\Gamma\backslash G)$ is a smooth representation, for each
 $u\in \mathcal U(\mathfrak g_{\mathbb C})$, the map
 $f\longmapsto u.f$ is continuous. So, if $\beta \in \mathcal S(\Gamma\backslash G)'$, then
 $f\longmapsto \beta(u.f)$ is a
 continuous linear functional.
 Hence, $\mathcal S(\Gamma\backslash G)'$ becomes $\mathcal U(\mathfrak g_{\mathbb C})$--module:
 $$
 u.\beta(f)=\beta(u^{\#}.f), \ \ f\in \mathcal S(\Gamma\backslash G).
 $$

 \vskip.2in
 
We consider the embedding of $L^2(\Gamma\backslash G)\hookrightarrow \mathcal S(\Gamma\backslash G)'$, given by
$\varphi\longmapsto \beta_\varphi$ where $\beta_\varphi$ is defined by $\beta_\varphi(f)=\int_{\Gamma \backslash G}
\varphi(x)f(x)dx$, for
$f\in \mathcal S(\Gamma\backslash G)$. It is proved in (\cite{casselman-1}, Proposition 1.17) that the map is
continuous. We sketch the
argument. Let $d>0$ be an integer  such that $\int_{\Gamma\backslash G} ||x||^{-2d}_{\Gamma\backslash G}dx<\infty$.
Let $B\subset  \mathcal S(\Gamma\backslash G)$ be a bounded set. Then, we have the following:

\begin{equation}\label{srs-17}
\begin{aligned}
  ||\beta_\varphi||_B&=\sup_{f\in B} \left|\int_{\Gamma \backslash G}\varphi(x)f(x)dx\right|\le
  \sup_{f\in B} \int_{\Gamma \backslash G}\left|\varphi(x)\right| \left|f(x)\right| dx\\
  &= \left(\int_{\Gamma\backslash G} ||x||^{-2d}_{\Gamma\backslash G}dx\right)^{1/2}
  \left(\sup_{f\in B} ||f||_{1, -d}\right) \cdot  \left(\int_{\Gamma \backslash G}\left|\varphi(x)\right|^2dx\right)^{1/2}
\end{aligned}
\end{equation}
which clearly proves the continuity. It is by the general theory that we have continuous map of smooth
representations
$\left(L^2(\Gamma\backslash G)\right)^\infty \hookrightarrow \left(\mathcal S(\Gamma\backslash G)'\right)^\infty$,
the image is actually
in $\mathcal A_{umg}(\Gamma\backslash G)$ (see Lemma \ref{src-5}). But even more is true

\begin{Lem}\label{srs-18}
  If the sequence   $(\varphi_n)_{n\ge 1}$ in $\left(L^2(\Gamma\backslash G)\right)^\infty$ converges to $\varphi\in
  L^2(\Gamma\backslash G)$, then for each  $u\in \mathcal U(\mathfrak g_{\mathbb C})$ the sequence
  $(\beta_{r(u)\varphi_n})_{n\ge 1}$ converges to $u.\beta_\varphi$ in the topology of $\mathcal S(\Gamma\backslash G)'$.
  \end{Lem}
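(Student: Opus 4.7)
The plan is to reduce the statement to the continuity estimate (\ref{srs-17}) via the integration by parts identity (\ref{srs-19}). The first step is to identify $\beta_{r(u)\varphi_n}$ with $u.\beta_{\varphi_n}$ as distributions. Since $\varphi_n\in(L^2(\Gamma\backslash G))^\infty$, by Dixmier--Malliavin combined with Lemma \ref{src-5} a representative of $\varphi_n$ lies in $\mathcal{A}_{umg}(\Gamma\backslash G)$, so it is smooth on $G$ and (because $\mathrm{vol}(\Gamma\backslash G)<\infty$) also $L^1$; moreover for such a smooth vector $r(u)\varphi_n$ coincides pointwise with $u.\varphi_n$. Applying (\ref{srs-19}) then gives, for every $f\in\mathcal{S}(\Gamma\backslash G)$,
$$
\beta_{r(u)\varphi_n}(f)=\int_{\Gamma\backslash G}u.\varphi_n(x)\,f(x)\,dx=\int_{\Gamma\backslash G}\varphi_n(x)\,u^{\#}.f(x)\,dx=\beta_{\varphi_n}(u^{\#}.f)=u.\beta_{\varphi_n}(f).
$$

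Next, fix any bounded set $B\subset\mathcal{S}(\Gamma\backslash G)$. Because $\mathcal{S}(\Gamma\backslash G)$ is a smooth Fr\'echet representation of $G$ (Proposition \ref{src-1}(ii)), the operator $f\mapsto u^{\#}.f$ is continuous, and hence $u^{\#}B$ is again bounded in $\mathcal{S}(\Gamma\backslash G)$. Using the identification above,
$$
\|\beta_{r(u)\varphi_n}-u.\beta_\varphi\|_B=\sup_{f\in B}\bigl|\beta_{\varphi_n-\varphi}(u^{\#}.f)\bigr|=\|\beta_{\varphi_n-\varphi}\|_{u^{\#}B}.
$$

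Finally, apply the $L^2$ estimate (\ref{srs-17}) to the function $\varphi_n-\varphi\in L^2(\Gamma\backslash G)$ together with the bounded set $u^{\#}B$. This yields a constant $C=C(u,B)>0$ (independent of $n$) such that
$$
\|\beta_{\varphi_n-\varphi}\|_{u^{\#}B}\le C\cdot\|\varphi_n-\varphi\|_{L^2}.
$$
Since $\varphi_n\to\varphi$ in $L^2$ by hypothesis, the right-hand side tends to zero, which is exactly the required convergence of $\beta_{r(u)\varphi_n}$ to $u.\beta_\varphi$ in the strong topology of $\mathcal{S}(\Gamma\backslash G)'$.

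The only genuinely nontrivial step is the passage from the abstract $L^2$-smooth vector $\varphi_n$ to a pointwise smooth, $L^1$ representative on which the transpose identity (\ref{srs-19}) can be legitimately invoked; once this is justified through Lemma \ref{src-5} and Dixmier--Malliavin, the remainder is a direct combination of the $L^2$-to-distribution estimate (\ref{srs-17}) and the continuity of $u^{\#}$ on the Schwartz space.
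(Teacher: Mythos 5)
Your proof is correct and follows essentially the same route as the paper: identify $\beta_{r(u)\varphi_n}$ with $u.\beta_{\varphi_n}$ via the transpose identity (\ref{srs-19}), and then apply the Cauchy--Schwarz estimate of (\ref{srs-17}) to $\varphi_n-\varphi$ paired against $u^{\#}.f$ for $f$ in the bounded set $B$, which is exactly the displayed inequality in the paper's proof. The only blemish is your appeal to $\mathrm{vol}(\Gamma\backslash G)<\infty$ to obtain an $L^1$ representative --- the lemma is stated for an arbitrary discrete $\Gamma$, so this is not available in general --- but it is also not needed, since $\varphi_n\cdot(u^{\#}.f)$ is integrable by Cauchy--Schwarz (because $|u^{\#}.f(x)|\le \|f\|_{u^{\#},-d}\,\|x\|_{\Gamma\backslash G}^{-d}$ lies in $L^2(\Gamma\backslash G)$ for $d$ large), which is all that the integration-by-parts identity (\ref{srs-19}) requires.
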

\begin{proof}Arguing as in (\ref{srs-17}) and using (\ref{srs-19}), we have  
$$
 ||\beta_{u.\varphi_n}-u.\beta_\varphi||_B\le  \left(\int_{\Gamma\backslash G} ||x||^{-2d}_{\Gamma\backslash G}dx\right)^{1/2}
 \left(\sup_{f\in B} ||f||_{u^\#, -d}\right) \cdot  \left(\int_{\Gamma \backslash G}\left|\varphi_n(x)-\varphi(x)\right|^2dx\right)^{1/2},
 $$
 for all bounded sets $B\subset  \mathcal S(\Gamma\backslash G)$  and $u\in \mathcal U(\mathfrak g_{\mathbb C})$.
  \end{proof}

\vskip .2in
\begin{Cor}\label{srs-20}
 $\beta_\varphi$ is a  smooth vector in $\mathcal S(\Gamma\backslash G)'$ for all $\varphi\in L^2(\Gamma\backslash G)$.
  \end{Cor}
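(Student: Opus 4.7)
The plan is to show that the orbit map $g \mapsto r'(g)\beta_\varphi$ is of class $C^\infty$ from $G$ into $\mathcal{S}(\Gamma\backslash G)'$ with the strong topology. By $G$--equivariance of the embedding $\varphi \mapsto \beta_\varphi$, this orbit map equals $g \mapsto \beta_{r(g)\varphi}$. Continuity of the orbit map and uniform boundedness of $\|\beta_{r(g)\varphi}\|_B$ (for bounded $B \subset \mathcal{S}(\Gamma\backslash G)$) in $g$ follow at once from the estimate (\ref{srs-17}) together with unitarity of the right regular representation on $L^2(\Gamma\backslash G)$.

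The main step is to show that all Lie algebra derivatives of the orbit map exist in the strong topology. For $X \in \mathfrak g$, a change of variables gives
\[
\tfrac{1}{t}\bigl[\beta_{r(\exp(tX))\varphi} - \beta_\varphi\bigr](f) \;=\; \beta_\varphi\!\left(\tfrac{1}{t}\bigl[r(\exp(-tX))f - f\bigr]\right), \qquad f \in \mathcal{S}(\Gamma\backslash G).
\]
The inner difference quotient converges to $-X.f = X^{\#}.f$ in the Fr\'echet topology of $\mathcal{S}(\Gamma\backslash G)$ as $t \to 0$. The decisive point is that this convergence is \emph{uniform} on bounded subsets $B \subset \mathcal{S}(\Gamma\backslash G)$: Taylor's formula with integral remainder applied in the smooth Fr\'echet representation $\mathcal{S}(\Gamma\backslash G)$, combined with its moderate growth (Proposition \ref{src-1}), yields for every continuous seminorm $\rho$ a constant $C>0$ and a continuous seminorm $\nu$ such that
\[
\sup_{f \in B}\ \rho\!\left(\tfrac{1}{t}\bigl[r(\exp(-tX))f - f\bigr] + X.f\right) \;\le\; C\,|t| \sup_{f \in B}\nu(X^2.f) \;=\; O(|t|).
\]
Combining this with the $L^2$--to--$\mathcal{S}'$ estimate (\ref{srs-17}), we obtain strong convergence
\[
\tfrac{1}{t}\bigl[\beta_{r(\exp(tX))\varphi} - \beta_\varphi\bigr] \;\longrightarrow\; X.\beta_\varphi \qquad \text{in }\ \mathcal{S}(\Gamma\backslash G)',
\]
where the limit $X.\beta_\varphi$ is the continuous functional $f \mapsto \beta_\varphi(X^{\#}.f)$ defined before Lemma \ref{srs-18}.

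Derivatives at a general point $g \in G$ are handled in exactly the same way, replacing $f$ by $r(g^{-1})f$ throughout and using that $r(g^{-1})$ sends bounded sets to bounded sets in $\mathcal{S}(\Gamma\backslash G)$. Iterating with higher order Taylor expansions yields every derivative $u.\beta_\varphi$ for $u \in \mathcal{U}(\mathfrak g_{\mathbb C})$ as a strong limit of the appropriate difference quotient, and continuity in $g$ of each derivative follows from the continuity of the $G$--action on $\mathcal{S}(\Gamma\backslash G)'$. Hence the orbit map is $C^\infty$ and $\beta_\varphi$ is a smooth vector. The principal technical obstacle is precisely the passage from pointwise--in--$f$ convergence of difference quotients to uniform convergence on bounded subsets of $\mathcal{S}(\Gamma\backslash G)$, and this is exactly what the Taylor remainder combined with the moderate growth of $\mathcal{S}(\Gamma\backslash G)$ supplies.
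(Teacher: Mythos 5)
Your argument is correct, but it follows a genuinely different route from the paper. The paper's proof is an approximation--completeness argument: smooth vectors are dense in $L^2(\Gamma\backslash G)$, the continuous $G$--map $\varphi\mapsto\beta_\varphi$ sends $\left(L^2(\Gamma\backslash G)\right)^\infty$ into $\left(\mathcal S(\Gamma\backslash G)'\right)^\infty$ by general theory, and Lemma \ref{srs-18} shows that for $\varphi_n\in\left(L^2(\Gamma\backslash G)\right)^\infty$ with $\varphi_n\to\varphi$ in $L^2$ the net $\beta_{\varphi_n}$ converges in every seminorm $\alpha\mapsto ||r'(u)\alpha||_B$ defining the topology on $\left(\mathcal S(\Gamma\backslash G)'\right)^\infty$; completeness of that space then forces the limit $\beta_\varphi$ to be smooth. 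You instead verify the definition directly, differentiating the orbit map $g\mapsto\beta_{r(g)\varphi}=\beta_\varphi\circ r(g^{-1})$ by transferring the difference quotient onto the test function and controlling it uniformly on bounded sets via Taylor's formula with integral remainder in the Fr\'echet representation $\mathcal S(\Gamma\backslash G)$, combined with the Cauchy--Schwarz estimate (\ref{srs-17}). Your approach buys self-containedness: it needs neither the density of smooth vectors in $L^2$ nor the completeness of $\left(\mathcal S(\Gamma\backslash G)'\right)^\infty$, and it identifies precisely where uniformity on bounded sets comes from (moderate growth of $\mathcal S(\Gamma\backslash G)$). The price is the iteration to all orders, which you state but do not carry out: the operators $u.$ do not commute with right translations, so the induction must track functionals of the form $f\mapsto\beta_\varphi\left(u^{\#}.r(g^{-1})f\right)$ (equivalently, handle the $\mathrm{Ad}$--twist as in the computation in Lemma \ref{srs-24}); since each $u^{\#}.$ is a continuous operator on $\mathcal S(\Gamma\backslash G)$, the same Taylor--remainder estimate passes through and the induction closes, so this is a routine but nontrivial amount of bookkeeping that the paper's shorter argument avoids entirely.
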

\begin{proof} We recall  that the space of smooth vectors $\left(\mathcal S(\Gamma\backslash G)'\right)^\infty$ in
  $\mathcal S(\Gamma\backslash G)'$
  is a complete locally convex representation of $G$ where topology is defined by the semi--norms:
$$
\alpha\longmapsto ||r'(u)\alpha||_B,
$$
where $u\in  \mathcal U(\mathfrak g_{\mathbb C})$ and $B\subset \mathcal S(\Gamma \backslash G)$ is bounded. Now,
since by the general theory,  the image of $\left(L^2(\Gamma\backslash G)\right)^\infty$
belongs to $\left(\mathcal S(\Gamma\backslash G)'\right)^\infty$, 
we may apply Lemma \ref{srs-18} to complete the proof.
\end{proof}

 \vskip .2in 

\begin{Lem}\label{srs-26}
Let $\mathcal H$ be a closed irreducible $G$--invariant subspace of $L^2(\Gamma\backslash G)$. Then, we have the
following  commutative diagram:
$$
\begin{CD}
  \mathcal H @>\subset>> L^2(\Gamma\backslash G) @>\varphi\longmapsto \beta_\varphi>>  \mathcal S(\Gamma\backslash G)'\\
  @AAA @AAA @AAA\\
  \mathcal H^\infty @>\subset >>  \left( L^2(\Gamma\backslash G) \right)^\infty @>\varphi\longmapsto \beta_\varphi>>
  \mathcal A_{umg}(\Gamma\backslash G), \\
\end{CD}
$$
where in the first row are continuous maps, and  the second row is also
  continuous if we equip $\mathcal A_{umg}(\Gamma\backslash G)$ with the topology inherited from $\left(\mathcal
   S(\Gamma\backslash G)'\right)^\infty$.
\end{Lem}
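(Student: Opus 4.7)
The plan is to verify continuity of each horizontal arrow in the diagram and then observe that commutativity is essentially tautological; all of the real analytic work has already been done in the preceding lemmas.

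\emph{Top row.} The inclusion $\mathcal H\hookrightarrow L^2(\Gamma\backslash G)$ is continuous by definition of the subspace topology. The continuity of $L^2(\Gamma\backslash G)\to \mathcal S(\Gamma\backslash G)'$, $\varphi\mapsto \beta_\varphi$, was already established via the estimate (\ref{srs-17}) just before the statement of the lemma.

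\emph{Bottom row.} Passage to smooth vectors is functorial for continuous $G$--equivariant maps of locally convex representations: a continuous $G$--equivariant map restricts to a continuous map between the spaces of smooth vectors equipped with their standard topologies, since the defining seminorms $v\mapsto\|\pi(u)v\|_\rho$ pull back along the map. Applied to the inclusion $\mathcal H\hookrightarrow L^2(\Gamma\backslash G)$, this gives continuity of $\mathcal H^\infty\hookrightarrow \left(L^2(\Gamma\backslash G)\right)^\infty$ (in fact the seminorms on $\mathcal H^\infty$ are exactly the restrictions of those on $\left(L^2(\Gamma\backslash G)\right)^\infty$). Applied to $\varphi\mapsto \beta_\varphi$, together with Corollary \ref{srs-20} which guarantees that $\beta_\varphi$ is a smooth vector for every $\varphi\in L^2(\Gamma\backslash G)$, this gives a continuous $G$--equivariant map $\left(L^2(\Gamma\backslash G)\right)^\infty \to \left(\mathcal S(\Gamma\backslash G)'\right)^\infty$. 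The image lies in $\mathcal A_{umg}(\Gamma\backslash G)$: by Dixmier--Malliavin any $\varphi\in \left(L^2(\Gamma\backslash G)\right)^\infty$ can be written as a finite sum of convolutions $\psi\star\alpha$ with $\psi\in L^2(\Gamma\backslash G)$ and $\alpha\in C_c^\infty(G)$, so Lemma \ref{src-5} applies. Since $\mathcal A_{umg}(\Gamma\backslash G)$ is given the subspace topology inherited from $\left(\mathcal S(\Gamma\backslash G)'\right)^\infty$, continuity of the factored map $\left(L^2(\Gamma\backslash G)\right)^\infty \to \mathcal A_{umg}(\Gamma\backslash G)$ is automatic.

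\emph{Commutativity.} For $\varphi\in \mathcal H^\infty$, going right--then--up and going up--then--right both yield $\beta_\varphi\in \mathcal S(\Gamma\backslash G)'$; similarly for the left square. There is no real obstacle in this lemma, since each ingredient, continuity of $\varphi\mapsto\beta_\varphi$ on $L^2$, smoothness of its values, and landing in $\mathcal A_{umg}$, has been supplied earlier; the only point requiring a moment of care is the observation that functoriality of the smooth--vector construction automatically delivers the continuity statement for the bottom row once the top row is continuous.
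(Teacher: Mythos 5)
Your proof is correct and follows essentially the same route as the paper: continuity of the top row from the estimate preceding the lemma, the image of $\left(L^2(\Gamma\backslash G)\right)^\infty$ landing in $\mathcal A_{umg}(\Gamma\backslash G)$ via Dixmier--Malliavin together with Lemma \ref{src-5} (the paper cites Lemma \ref{src-4} and Dixmier--Malliavin here, but invokes Lemma \ref{src-5} for the very same point in the paragraph before the statement), and commutativity from general functoriality of the smooth--vector construction. Your write-up just makes explicit the details the paper leaves as ``above discussions.''
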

  \begin{proof} Above discussions imply that the first row consists of continuous maps. Next, 
    Lemma \ref{src-4} and Diximier--Malliavin theorem \cite{dixmal} assure that the image of
    $\left( L^2(\Gamma\backslash G) \right)^\infty$ in $\mathcal A_{umg}(\Gamma\backslash G)$. Finally, the commutativity of
    the diagram is a consequence of general facts about smooth vectors.
    \end{proof}
  \vskip .2in
  The following result uses deep results about globalization due to Casselman \cite{casselman} and Wallach
  \cite{W2}.

\vskip .2in
\begin{Lem}\label{srs-27}
  Let $\mathcal H$ be a closed irreducible $G$--invariant subspace of $L^2(\Gamma\backslash G)$. Then, there exists
  $n_0\in \mathbb Z$ such that for $n\ge n_0$, the map $\varphi\longmapsto \beta_\varphi$ maps  $\mathcal H^\infty$ equipped with its
  natural topology into $\mathcal A_{umg, n}(\Gamma\backslash G)$ (considered as a subspace of $\mathcal S(\Gamma\backslash G)'$ but equipped
  with its standard topology)
  isomorphically onto its image which is closed in
  $\mathcal A_{umg, n}(\Gamma\backslash G)$.
  \end{Lem}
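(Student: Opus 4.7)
The plan is to combine a Sobolev-type bound at the identity with the $L^2$-unitarity of $\pi$ and the adjoint-action expansion to produce an explicit continuous embedding of $\mathcal H^\infty$ into some $\mathcal A_{umg, n_0}(\Gamma\backslash G)$, and then to apply the Casselman-Wallach uniqueness theorem to upgrade this continuous injection to a topological isomorphism onto a closed image.

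First, I would locate the image. Since $\Gamma$ is discrete, a compact neighborhood $U$ of $1\in G$ injects into $\Gamma\backslash G$, and the local Sobolev inequality then produces $u_1,\ldots,u_N\in \mathcal U(\mathfrak g_{\mathbb C})$ and $C_1>0$ with
\[|\varphi(1)|\le C_1\sum_{i=1}^N ||\pi(u_i)\varphi||_{L^2(\Gamma\backslash G)},\qquad \varphi\in \mathcal H^\infty.\]
Applying this to $\pi(g)\varphi$, using $\pi(u_i)\pi(g)=\pi(g)\pi(\mathrm{Ad}(g^{-1})u_i)$ together with the unitarity of $\pi(g)$, and then expanding $\mathrm{Ad}(g^{-1})u_i$ in a fixed basis $v_1,\ldots,v_k$ of $\mathcal U^k(\mathfrak g_{\mathbb C})$ as in the proof of Lemma \ref{srs-24} (which gives $|\nu_{ij}(g^{-1})|\le D||g||^r$), one obtains an integer $r$ and a continuous seminorm $\rho_1$ on $\mathcal H^\infty$ with $|\varphi(g)|\le ||g||^r\rho_1(\varphi)$. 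Repeating the same estimate with $\pi(u)\varphi$ in place of $\varphi$ produces, for each $u\in \mathcal U(\mathfrak g_{\mathbb C})$, a continuous seminorm $\rho_u$ with $|u.\varphi(g)|\le ||g||^r\rho_u(\varphi)$; the $\Gamma$-invariance of $\varphi$ and taking the infimum over $\gamma\in\Gamma$ then replaces $||g||$ by $||g||_{\Gamma\backslash G}$. Setting $n_0=r$ yields $||\varphi||_{u,n_0}\le \rho_u(\varphi)$, so $\varphi\mapsto \beta_\varphi$ sends $\mathcal H^\infty$ continuously into $\mathcal A_{umg,n}(\Gamma\backslash G)$ for every $n\ge n_0$.

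Next, injectivity is immediate: the kernel of this $G$-equivariant continuous map is a closed $G$-invariant subspace of $\mathcal H^\infty$, and it is proper since $\mathcal H_K$ already embeds nontrivially; irreducibility of $\mathcal H^\infty$ as a Fr\'echet representation (Lemma \ref{srs-02}) then forces the kernel to be $\{0\}$. To upgrade this continuous injection to a topological isomorphism onto a closed image, let $V_n$ denote the closure of the image (equivalently, of $\mathcal H_K$) in $\mathcal A_{umg,n}(\Gamma\backslash G)$. By Lemma \ref{srs-24} together with (\cite{W2}, Lemma 11.5.2), $V_n$ is itself a smooth Fr\'echet representation of moderate growth. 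The Fourier-expansion argument used in the proof of Lemma \ref{srs-02} and in the proof of Theorem \ref{srs-29} shows $(V_n)_K=\mathcal H_K$, so $V_n$ is another smooth Fr\'echet moderate-growth globalization of the irreducible admissible $(\mathfrak g,K)$-module $\mathcal H_K$. By the Casselman-Wallach uniqueness theorem (\cite{W2}, Theorem 11.5.1), the identity $\mathcal H_K\to\mathcal H_K$ extends uniquely to a continuous $G$-isomorphism $\mathcal H^\infty\xrightarrow{\sim} V_n$, and by uniqueness this extension must coincide with $\varphi\mapsto \beta_\varphi$. Consequently the image equals $V_n$ and is closed, and the map is a topological isomorphism onto it.

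The main obstacle is the first step: converting the $L^2$-norm seminorms defining the topology on $\mathcal H^\infty$ into pointwise bounds with a single exponent $n_0$ valid uniformly across all derivatives $u\in \mathcal U(\mathfrak g_{\mathbb C})$. The delicate interplay between the local Sobolev embedding, the unitarity of $\pi$, and the adjoint-action polynomial bound from Lemma \ref{srs-24} is what makes this uniformity possible. A secondary subtlety in the final step---the identification $(V_n)_K=\mathcal H_K$---is handled by the admissibility inherent in the smooth Fr\'echet moderate-growth structure of $V_n$ together with the density of $\mathcal H_K$ inside $V_n$.
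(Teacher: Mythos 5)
Your proof is correct, and the endgame (forming the closure $V_n$ of the image in $\mathcal A_{umg,n}(\Gamma\backslash G)$, identifying its $K$--finite vectors with $\mathcal H_K$, invoking moderate growth of closed subrepresentations and the Casselman--Wallach uniqueness theorem from \cite{W2}, Theorem 11.5.1) coincides with the paper's. Where you genuinely diverge is in how the exponent $n_0$ and the continuity of $\varphi\mapsto\beta_\varphi$ into $\mathcal A_{umg,n_0}(\Gamma\backslash G)$ are obtained. The paper argues softly: by Lemma \ref{srs-26} (ultimately Dixmier--Malliavin and Lemma \ref{src-5}) the image of $\mathcal H^\infty$ already lies in $\mathcal A_{umg}(\Gamma\backslash G)=\cup_n\mathcal A_{umg,n}(\Gamma\backslash G)$; one then picks a single non--zero vector of $\mathcal H_K$, places it in some $\mathcal A_{umg,n_0}$, and uses irreducibility of $\mathcal H_K$ as a $\mathcal U(\mathfrak g_{\mathbb C})$--module together with the $\mathcal U(\mathfrak g_{\mathbb C})$--stability of each $\mathcal A_{umg,n}$ to conclude that all of $\mathcal H_K$ lands there; continuity of the map on all of $\mathcal H^\infty$ is then not checked by hand but is delivered a posteriori by the Casselman--Wallach extension, after which the paper must still identify that extension with $\varphi\mapsto\beta_\varphi$ by composing with the inclusion of Lemma \ref{srs-28} and comparing with the diagram of Lemma \ref{srs-26}. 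You instead prove the quantitative estimate $|u.\varphi(g)|\le \|g\|_{\Gamma\backslash G}^{\,r}\,\rho_u(\varphi)$ directly, via a local Sobolev inequality at the identity, unitarity of $\pi(g)$ on $L^2(\Gamma\backslash G)$, and the polynomial bound on the matrix coefficients of $\mathrm{Ad}$ already used in Lemma \ref{srs-24}; the crucial point, which you correctly isolate, is that the exponent $r$ depends only on the fixed Sobolev elements $u_1,\dots,u_N$ and hence is uniform in $u$. This buys you an explicit $n_0$, continuity of the map on all of $\mathcal H^\infty$ before any globalization theorem is invoked, and a cleaner identification of the Casselman--Wallach extension with $\varphi\mapsto\beta_\varphi$ (density of $\mathcal H_K$ plus continuity of both maps suffices), at the cost of a slightly longer computation and of having to justify the standard facts that smooth vectors of $L^2(\Gamma\backslash G)$ have smooth representatives and that a small neighborhood of $1$ injects into $\Gamma\backslash G$. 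Both routes are sound; yours is more self--contained on the analytic side, the paper's leans more heavily on the structural results it has already set up.
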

\begin{proof} By Lemma \ref{srs-26}, the map $\mathcal H^\infty\longrightarrow \mathcal A_{umg}(\Gamma\backslash G)$, given by
  $\varphi\longmapsto \beta_\varphi$, is continuous if we equip $\mathcal A_{umg}(\Gamma\backslash G)$ with the topology
  inherited from $\left(\mathcal S(\Gamma\backslash G)'\right)^\infty$. It is also
  $\mathcal U(\mathfrak g_{\mathbb C})$--equivariant. Select any non--zero $\varphi\in \mathcal H_K$. Then, there exists
  $n_0\in \mathbb Z$ such that
  $$
  \varphi\in  \mathcal A_{umg, n_0}(\Gamma\backslash G)\subset \mathcal A_{umg, n_0+1}(\Gamma\backslash G)\subset
  \mathcal A_{umg, n_0+2}(\Gamma\backslash G)\subset \cdots.
  $$
 But since $\mathcal A_{umg, n}(\Gamma\backslash G)$ are smooth representations, and $\mathcal H_K$ is an irreducible
 $\mathcal U(\mathfrak g_{\mathbb C})$--module, we see that the image of $\mathcal H_K$ is contained in
 $\mathcal A_{umg, n}(\Gamma\backslash G)$ for $n\ge n_0$. Let $W_n$ be the closure of the image in
 $\mathcal A_{umg, n}(\Gamma\backslash G)$ for $n\ge n_0$. It is obvious that a $(\mathfrak g, K)$--module on the space of
 $K$--finite vectors  in $W_n$ is the image of $\mathcal H_K$. Therefore, $W_n$ is irreducible.
 We remark that $W_n$  being a closed subrepresentation of a
 representation of moderate growth $\mathcal A_{umg, n}(\Gamma\backslash G)$ is also a representation of moderate growth
 (see Lemma \ref{srs-24}; \cite{W2}, Lemma 11.5.2). But $\mathcal H^\infty$
 is also a representation of moderate growth (\cite{W2}, Lemma 11.5.1) and irreducible (see Lemma \ref{srs-02}).
 So,  the map $\mathcal H_K\longrightarrow \left(W_n\right)_K$,
 $\varphi\longrightarrow \beta_\varphi$, which is an isomorphism of $(\mathfrak g, K)$--modules,  extends to a continuous isomorphism
 of $G$--representations $\mathcal H^\infty$ and $W_n$ applying (\cite{W2}, Theorem 11.5.1). Let us finally determine this map. This is
 easy since by the composition
 with the continuous inclusion $\mathcal A_{umg, n}(\Gamma \backslash G)\hookrightarrow \left(\mathcal
   S(\Gamma\backslash G)'\right)^\infty$ (see Lemma \ref{srs-28}), we obtained the map that coincides on $\mathcal H_K$ 
   with the continuous map given by the second row of the diagram in Lemma \ref{srs-26}. Hence, the map is $\varphi
   \longrightarrow \beta_\varphi$.
  \end{proof}

\vskip .2in
The first main result of this section is the following theorem. The reader should review the
statement of Proposition \ref{srs-0}.

\vskip .2in

  \begin{Thm}\label{srs-21}
    Let $\mathcal H$ be a closed irreducible $G$--invariant subspace of $L^2(\Gamma\backslash G)$. Then, we have the following:

    \begin{itemize}
      \item[(i)] The continuous inclusion
  $\mathcal H\hookrightarrow L^2(\Gamma\backslash G)$ gives rise to a continuous linear functional $\lambda$
  such that the following diagram is commutative:
$$
\begin{CD}
  \mathcal H  @>\varphi\longmapsto \beta_\varphi>>  \mathcal S(\Gamma\backslash G)'\\
  @A\subset AA @A=AA\\
  \mathcal H^\infty @>\alpha_{\lambda, \Gamma} >>    \mathcal S(\Gamma\backslash G)'. \\
\end{CD}
$$
Furthermore,   $\mathcal H$ is embedded into  the smooth vectors of the closure
$Cl\left(\alpha_{\lambda, \Gamma}\left(\mathcal H^\infty\right) \right)$.
\item[(ii)] In addition, assume that Assumption \ref{paf-00} holds. Then, if $\mathcal H$ is tempered, then  $d_{\mathcal H, \lambda}=-\infty$. 
  \end{itemize}
\end{Thm}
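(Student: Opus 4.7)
\textbf{Proof plan for Theorem~\ref{srs-21}.}

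For part~(i), the natural candidate for $\lambda$ is evaluation at the identity. By Lemma~\ref{srs-27}, for all sufficiently large $n$ the map $\varphi\mapsto\beta_\varphi$ is a topological $G$-equivariant isomorphism of $\mathcal H^\infty$ onto a closed subspace of $\mathcal A_{umg,n}(\Gamma\backslash G)$, so each $\varphi\in\mathcal H^\infty$ is identified with an honest smooth function on $\Gamma\backslash G$ (compatibly with Lemma~\ref{src-5}). I would set
$$
\lambda(\varphi) = \varphi(1).
$$
Continuity of $\lambda$ is immediate from Lemma~\ref{srs-22} composed with Lemma~\ref{srs-27}; the $\Gamma$-invariance follows since $\lambda(\pi(\gamma)\varphi)=\varphi(\gamma)=\varphi(1)$ by left $\Gamma$-invariance of $\varphi$. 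Commutativity of the diagram is then the calculation
$$
\alpha_{\lambda,\Gamma}(\varphi)(f) = \int_{\Gamma\backslash G}\lambda(\pi(g)\varphi)f(g)\,dg = \int_{\Gamma\backslash G}\varphi(g)f(g)\,dg = \beta_\varphi(f),
$$
valid for $\varphi\in\mathcal H^\infty$ and $f\in\mathcal S(\Gamma\backslash G)$. The final assertion of~(i) follows by extending $\varphi\mapsto\beta_\varphi$ by continuity from $\mathcal H^\infty$ to all of $\mathcal H\subset L^2(\Gamma\backslash G)$ and invoking Corollary~\ref{srs-20}, which places the image in the smooth vectors of $\mathcal S(\Gamma\backslash G)'$; density of $\mathcal H^\infty$ in $\mathcal H$ then forces the image of $\mathcal H$ into $Cl(\alpha_{\lambda,\Gamma}(\mathcal H^\infty))$.

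For part~(ii), the goal is to show that $g\mapsto h(g)=\lambda(\pi(g)h)$ decays faster than any polynomial in $\|g\|_{\Gamma\backslash G}$ for every $h\in\mathcal H^\infty$. The first step is to apply Dixmier--Malliavin to write $h=\sum_i\pi(\beta_i)h_i$ with $\beta_i\in C_c^\infty(G)$ and $h_i\in\mathcal H^\infty$, which gives
$$
h(g) = \sum_i\int_G\beta_i(y)h_i(gy)\,dy
$$
and reduces pointwise estimates on $h$ to integrable estimates of right translates of $h_i$. Temperedness of $(\pi,\mathcal H)$ then supplies an initial bound of the form $|h(g)|\le C\,\Xi(g)(1+\log\|g\|)^N$ via the Harish-Chandra estimate for matrix coefficients of tempered representations.

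To upgrade this initial bound to rapid decay in $\|\cdot\|_{\Gamma\backslash G}$, I would invoke the Casselman-Wallach machinery: repeated application of elements of $\mathcal U(\mathfrak g_{\mathbb C})$ to $h$, combined with integration by parts against unipotent radicals of proper $\mathbb Q$-parabolics (exploiting the cuspidality information implicit in the tempered discrete embedding), gains additional decay at each step. Combined with reduction theory on $\Gamma\backslash G$ via Siegel sets, this produces polynomial decay of arbitrarily high order, giving $d_{\mathcal H,\lambda}=-\infty$. The main obstacle is precisely this upgrade from $\Xi$-type bounds to genuine rapid decay in $\|\cdot\|_{\Gamma\backslash G}$; it is the content of the rapid-decay statement for smooth cuspidal automorphic forms announced in the introduction, and it requires the full strength of the Casselman-Wallach smooth globalization theorems together with reduction theory for the arithmetic quotient.
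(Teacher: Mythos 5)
Part (i) of your proposal is correct and is essentially the paper's own argument: $\lambda(\varphi)=\varphi(1)$, continuity via Lemma \ref{srs-27} followed by Lemma \ref{srs-22}, the same commutativity computation, and Corollary \ref{srs-20} for the last assertion. No issues there.

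Part (ii) has a genuine gap, in two places. First, the cuspidality of $\mathcal H$ is not ``implicit in the tempered discrete embedding''; it is precisely the nontrivial input. The paper invokes Wallach's theorem on the constant term of a square-integrable automorphic form (\cite{W0}) to conclude that a tempered closed irreducible subrepresentation of $L^2(\Gamma\backslash G)$ already lies in $L^2_{cusp}(\Gamma\backslash G)$. Without that step you have no cuspidality to exploit, and your ``integration by parts against unipotent radicals'' has nothing to act on. Second, your intermediate bound $|h(g)|\le C\,\Xi(g)(1+\log\|g\|)^N$ is not available here: the Harish--Chandra estimate controls matrix coefficients $\langle\pi(g)v,w\rangle$ with $v,w\in\mathcal H$, whereas $\lambda$ is only a continuous functional on $\mathcal H^\infty$ (a distribution vector), and in any case decay in $\|g\|$ on $G$ is not the relevant quantity --- what must be controlled is $\|g\|_{\Gamma\backslash G}$, i.e.\ behaviour up the cusps of the quotient, which a $\Xi$-bound on $G$ says nothing about. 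The proposed upgrade ``repeated application of $\mathcal U(\mathfrak g_{\mathbb C})$ gains additional decay at each step'' is not a mechanism that works: differentiating an automorphic form does not improve its rate of decay.

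The paper's actual route, once cuspidality is in hand, is cleaner and you should be aware of it: $\mathcal H_K$ consists of $K$-finite cuspidal automorphic forms, which are rapidly decreasing by the classical theorem (reduction theory; \cite{BJ}), so $\mathcal H_K\subset\mathcal A_{cusp}(\Gamma\backslash G)\cap\mathcal S(\Gamma\backslash G)=\mathcal A_{umg,-\infty}(\Gamma\backslash G)$. Then one repeats the globalization argument of Lemma \ref{srs-27} with $n=-\infty$: the closure of $\beta(\mathcal H_K)$ in the moderate-growth Fr\'echet representation $\mathcal S(\Gamma\backslash G)$ is an irreducible smooth globalization of $\mathcal H_K$, hence by the Casselman--Wallach theorem it is topologically isomorphic to $\mathcal H^\infty$. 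This forces $\mathcal H^\infty\subset\mathcal S(\Gamma\backslash G)$, which is exactly the statement $d_{\mathcal H,\lambda}=-\infty$ (and, as a byproduct, the rapid decay of smooth cusp forms). The Casselman--Wallach input is thus used once, as a uniqueness-of-globalization statement transferring rapid decay from $K$-finite vectors to all smooth vectors, not as an iterative decay-improving device.
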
 
  \begin{proof} We prove (i). Lemma \ref{srs-26} implies that $\mathcal H^\infty \subset \mathcal A_{umg}(\Gamma \backslash G)$. Next, by Lemma
    \ref{srs-27}, there exists an integer $n\ge 1$ such that $\mathcal H^\infty \subset \mathcal A_{umg, n}(\Gamma \backslash G)$.
    This inclusion is continuous in appropriate topologies. Hence, by Lemma \ref{srs-22}, $\varphi\longmapsto \varphi(1)$ is
    a continuous functional on  $\mathcal H^\infty$. If we denote this functional by $\lambda$, then the commutativity of the diagram
    follows. The last part of (i) follows from Corollary \ref{srs-20}.

    We prove (ii). Because of the Assumption \ref{paf-00}, we may consider the space of the closed subspace
    $L^2_{cusp}(\Gamma\backslash G)$ of cuspidal functions in $L^2(\Gamma\backslash G)$. It is a $G$--subrepresentation. By a result of
    Wallach \cite{W0}, since $\mathcal H$ is a tempered closed subrepresentation of  $L^2(\Gamma\backslash G)$,  $\mathcal H$ is a closed
    subrepresentation of  $L^2_{cusp}(\Gamma\backslash G)$. Then, using the notation of Section \ref{paf},
    $\mathcal H_K\subset \mathcal A_{cusp}(\Gamma\backslash G)$, and in fact
    $$
    \mathcal H_K\subset \mathcal A_{cusp}(\Gamma\backslash G)\cap \mathcal S(\Gamma\backslash G),
    $$
    since $K$--finite cuspidal automorphic forms are rapidly decreasing \cite{BJ}.  Now, arguing as in the proof of Lemma \ref{srs-27}, we see
    that
    $$
    \mathcal H^\infty \subset \mathcal A^\infty_{cusp}(\Gamma\backslash G)\cap \mathcal S(\Gamma\backslash G).
    $$
    This implies (ii). It also shows that smooth cuspidal automorphic forms are rapidly decreasing. Which gives a different proof of
    the fact proved also in \cite{ms1}.
  \end{proof}

\vskip .2in
We maintain the Assumption \ref{paf-00}, and assume that $G$ posses representations in discrete series
(\cite{milicic}, \cite{hc}). Then, if $(\pi, \mathcal H)$ is a representation in discrete series, then there exists infinitely many
congruence subgroups $\Gamma$ such that we can embedded it in $L^2_{cusp}(\Gamma\backslash G)$ (\cite{savin}, \cite{clozel}). Therefore, it
posses a non--zero $\Gamma$ invariant functional such that $d_{\pi, \lambda}=-\infty$. On the other hand, by counting tempered representation,
most of them do not appear as subrepresentations of $L^2(\Gamma\backslash G)$ for a congruence subgroup $\Gamma$.

\vskip .2in
Following Harish--Chandra (\cite{hc}, Section 5), we introduce the topology on $C^\infty(G)$ by means of seminorms
$$
\nu_{\Omega, u}, \ \ \text{$\Omega\subset G$ is compact and $u\in \mathcal U(\mathfrak g_{\mathbb C})$}
$$
defined by
$$
  \nu_{\Omega, u}(F)=\sup_{x\in \Omega}{\ ||uF(x)||}.
$$

\vskip .2in

We end this section with the following theorem:

\begin{Thm}\label{srs-32}
  Let $\mathcal H$ be a closed irreducible $G$--invariant subspace of $L^2(\Gamma\backslash G)$. Assume that the sequence of elements
  in $\mathcal H^\infty$, $(\varphi_n)_{n\ge 1}$, converges to $\varphi\in\mathcal H^\infty$ in the standard topology on $\mathcal H^\infty$.
  Then, it converges to $\varphi$ in  above described topology on $C^\infty(G)$. In addition, assume that Assumption \ref{paf-00} holds.
  Then, if $\mathcal H\subset L^2_{cusp}(\Gamma\backslash G)$, then $u.\varphi_n\longmapsto u.\varphi$ uniformly on $G$ for all
  $u\in \mathcal U(\mathfrak g_{\mathbb C})$.
\end{Thm}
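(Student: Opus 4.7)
The plan is to use Lemma \ref{srs-27}, together with an adaptation of its proof for the Schwartz space, to match the topology on $\mathcal{H}^\infty$ with a subspace topology coming from a known smooth Fr\'echet representation on $\Gamma\backslash G$, and then to compare the relevant families of seminorms.

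For the first statement I would apply Lemma \ref{srs-27} to fix an integer $n$ (which, after possibly enlarging, I take to satisfy both $n \ge n_0$ and $n \ge 0$) such that $\varphi \mapsto \beta_\varphi$ is a topological isomorphism of $\mathcal{H}^\infty$ onto a closed subspace of $\mathcal{A}_{umg, n}(\Gamma\backslash G)$. The hypothesis $\varphi_m \to \varphi$ in $\mathcal{H}^\infty$ then translates to $\|\varphi_m - \varphi\|_{u, n} \to 0$ for every $u \in \mathcal{U}(\mathfrak{g}_{\mathbb{C}})$. To pass to convergence in the seminorms $\nu_{\Omega, u}$ on $C^\infty(G)$, I would use that $\|\cdot\|_{\Gamma\backslash G}$ is continuous and takes values in $[1, \infty)$, so that $M_\Omega := \sup_{x \in \Omega} \|x\|_{\Gamma\backslash G}$ is finite; one then has the elementary estimate
\[
\nu_{\Omega, u}(\varphi_m - \varphi) = \sup_{x \in \Omega} \bigl|u.(\varphi_m - \varphi)(x)\bigr| \le M_\Omega^n \, \|\varphi_m - \varphi\|_{u, n},
\]
which immediately yields the first claim.

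For the second statement I would begin by recalling from the proof of Theorem \ref{srs-21}(ii) that under Assumption \ref{paf-00} and $\mathcal{H} \subset L^2_{cusp}(\Gamma\backslash G)$ one already has $\mathcal{H}^\infty \subset \mathcal{S}(\Gamma\backslash G)$. The key step, and what I expect to be the main obstacle, is to upgrade Lemma \ref{srs-27} from $\mathcal{A}_{umg, n}(\Gamma\backslash G)$ to the Schwartz space itself: namely, to show that $\varphi \mapsto \beta_\varphi$ embeds $\mathcal{H}^\infty$ homeomorphically onto a closed subspace of $\mathcal{S}(\Gamma\backslash G)$. My plan is to repeat the argument of Lemma \ref{srs-27} verbatim with $\mathcal{S}(\Gamma\backslash G)$ in place of $\mathcal{A}_{umg, n}(\Gamma\backslash G)$; the required input is that $\mathcal{S}(\Gamma\backslash G)$ is itself a smooth Fr\'echet representation of moderate growth (Proposition \ref{src-1}(ii)), so that the closure $W$ of the image of $\mathcal{H}_K$ is an irreducible closed Fr\'echet subrepresentation of moderate growth (\cite{W2}, Lemma 11.5.2), and the Casselman--Wallach theorem (\cite{W2}, Theorem 11.5.1) promotes the $(\mathfrak{g}, K)$-isomorphism $\mathcal{H}_K \to W_K$ to a topological isomorphism of Fr\'echet $G$-representations $\mathcal{H}^\infty \to W$.

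With this identification in hand, convergence $\varphi_m \to \varphi$ in $\mathcal{H}^\infty$ becomes $\|\varphi_m - \varphi\|_{u, -k} \to 0$ for every $u \in \mathcal{U}(\mathfrak{g}_{\mathbb{C}})$ and every $k \ge 1$. Since $\|x\|_{\Gamma\backslash G} \ge 1$, one has the trivial bound $\sup_{x \in G} |u.\psi(x)| \le \|\psi\|_{u, -1}$ for any $\psi \in \mathcal{S}(\Gamma\backslash G)$, so $u.\varphi_m \to u.\varphi$ uniformly on $G$ for every $u$, as required. The only delicate point in the whole argument is the verification that the closure of $\mathcal{H}_K$ inside $\mathcal{S}(\Gamma\backslash G)$ satisfies the hypotheses of the Casselman--Wallach globalization theorem; once that is in place, the rest is routine seminorm bookkeeping.
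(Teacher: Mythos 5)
Your proposal is correct and follows essentially the same route as the paper: the first claim via the topological embedding of Lemma \ref{srs-27} into $\mathcal A_{umg, n}(\Gamma\backslash G)$ plus a seminorm comparison on compacta, and the second via the Casselman--Wallach upgrade of that embedding to $\mathcal S(\Gamma\backslash G)$, which is precisely the fact the paper cites from the proof of Theorem \ref{srs-21}(ii). The paper's own proof is a two--sentence reference to those facts; you have reproduced the same argument with the seminorm estimates made explicit.
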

\begin{proof} The first part follows from Lemma \ref{srs-27}. In addition, for the second part, we need the fact that
  $$
    \mathcal H^\infty \subset \mathcal A^\infty_{cusp}(\Gamma\backslash G)\cap \mathcal S(\Gamma\backslash G)
    $$
    established in the proof of Theorem \ref{srs-31}.
  \end{proof}
    
\section{On $\Gamma$--invariants in $\mathcal S'(G)$}\label{oii}
\medskip

\medskip

Let $\Gamma \subset G$ be a discrete subgroup. Then, the canonical map $\mathcal S(G)\longrightarrow \mathcal S(\Gamma \backslash G)$, given by 
$P_\Gamma(f)(x)=\sum_{\gamma\in \Gamma} f(\gamma x)$, is a continuous  (\cite{casselman-1}, Proposition 1.110).  We sketch the argument since the
details of the argument will be useful later. Let  $u\in \mathcal U(\mathfrak g_{\mathbb C})$. Let $n\ge 1$ be an integer. Then, we have
$$
\left|\left|P_\Gamma(f)\right|\right|_{u, -n}= \sup_{x\in G}  ||x||_{\Gamma\setminus G}^{n} \left|u.P_\Gamma(f)(x)\right|.
$$
Since $u.P_\Gamma(f)=P_\Gamma(u.f)$ and $||x||_{\Gamma\setminus G}\le ||\gamma x||$, we obtain
\begin{align*}
||x||_{\Gamma\setminus G}^{n} \left|u.P_\Gamma(f)(x)\right|&=||x||_{\Gamma\setminus G}^{n} \left|P_\Gamma(u.f)(x)\right|\\
&\le 
\sum_{\gamma\in \Gamma} ||\gamma x||^n \cdot |u.f(\gamma x)|\le \left|\left|f\right|\right|_{u, -d-n}
\left(\sum_{\gamma\in \Gamma} ||\gamma x||^{-d}\right),
\end{align*}
where $d>0$ is large enough such that $\int_G ||x||^{-d} dx<\infty$. But, by (\cite{casselman-1}, Lemma 1.10), we have
   \begin{equation}\label{srs-13}
M_d\overset{def}{=}\sup_{x\in G}  \sum_{\gamma \in \Gamma} \left|\left|\gamma x\right|\right|^{-d} <\infty.
\end{equation}
 Thus, we obtain
 \begin{equation}\label{srs-14}
\left|\left|P_\Gamma(f)\right|\right|_{u, -n}\le M_d \left|\left| f\right|\right|_{u, -d-n}.
\end{equation}

\medskip
The group $\Gamma$ acts on the left on $\mathcal S(G)$:  $l(\gamma)f(x)=f(\gamma^{-1}x)$.
By duality $\Gamma$ acts on $\mathcal S(G)'$: $l'(\gamma)\alpha(f)=\alpha(l(\gamma^{-1})f)$.

\medskip
\begin{Lem}\label{srs-15}
  Let $\gamma \in \Gamma$. Then, 
  the linear operator $l(\gamma)$ (resp., $l'(\gamma)$)  is continuous in the topology on $\mathcal S(G)$
  (resp.,  $\mathcal S(G)'$).
\end{Lem}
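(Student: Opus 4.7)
The plan is to verify continuity directly from the definitions of the Fréchet topology on $\mathcal S(G)$ (obtained from Casselman's seminorms $||\cdot||_{u,-n}$ with $\Gamma$ trivial, so $||x||_{\Gamma\backslash G}=||x||$) and the strong dual topology on $\mathcal S(G)'$. The first step is to observe that left translation commutes with the right action of $\mathcal U(\mathfrak g_{\mathbb C})$: since $X\in\mathfrak g$ acts by differentiation along the right-translation flow, one checks that $u.l(\gamma)f=l(\gamma)(u.f)$ for every $u\in \mathcal U(\mathfrak g_{\mathbb C})$.

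Using this, I would estimate a defining seminorm of $l(\gamma)f$ by a change of variable $y=\gamma^{-1}x$:
\begin{align*}
||l(\gamma)f||_{u,-n}
&= \sup_{x\in G} ||x||^{n}\,|u.l(\gamma)f(x)|
 = \sup_{y\in G} ||\gamma y||^{n}\,|u.f(y)| \\
&\le ||\gamma||^{n}\sup_{y\in G} ||y||^{n}\,|u.f(y)|
 = ||\gamma||^{n}\,||f||_{u,-n},
\end{align*}
where the inequality comes from the submultiplicativity property (2) of the norm on $G$. This proves continuity of $l(\gamma)$ on $\mathcal S(G)$, since each defining seminorm of $l(\gamma)f$ is dominated by a constant times the same seminorm of $f$.

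For the dual action $l'(\gamma)$ on $\mathcal S(G)'$, I would work with the seminorms $||\alpha||_B=\sup_{f\in B}|\alpha(f)|$ attached to bounded sets $B\subset \mathcal S(G)$. A direct computation gives
$$
||l'(\gamma)\alpha||_B = \sup_{f\in B} |\alpha(l(\gamma^{-1})f)| = ||\alpha||_{l(\gamma^{-1})B}.
$$
It remains to check that $l(\gamma^{-1})B$ is again a bounded subset of $\mathcal S(G)$, which is immediate from the first part: the estimate $||l(\gamma^{-1})f||_{u,-n}\le ||\gamma^{-1}||^{n}||f||_{u,-n}$ shows that $\sup_{f\in B}||l(\gamma^{-1})f||_{u,-n}<\infty$ whenever $\sup_{f\in B}||f||_{u,-n}<\infty$. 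Thus $||l'(\gamma)\alpha||_B$ is bounded by one of the defining seminorms of the strong dual topology, and $l'(\gamma)$ is continuous on $\mathcal S(G)'$.

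There is no real obstacle here; the only thing to be careful about is the order of the commutation in $u.l(\gamma)f=l(\gamma)(u.f)$, which relies on the fact that we act by $\mathcal U(\mathfrak g_{\mathbb C})$ on the right while $l(\gamma)$ translates on the left, and on the submultiplicativity of the norm to absorb the translate of $\gamma$.
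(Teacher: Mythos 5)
Your proof is correct and follows essentially the same route as the paper: the change of variables combined with submultiplicativity of the norm gives $||l(\gamma)f||_{u,-n}\le ||\gamma||^{n}||f||_{u,-n}$, and the dual statement follows from $||l'(\gamma)\alpha||_B=||\alpha||_{l(\gamma^{-1})B}$ together with the boundedness of $l(\gamma^{-1})B$. The only difference is that you make explicit the commutation $u.l(\gamma)f=l(\gamma)(u.f)$, which the paper uses tacitly.
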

\begin{proof}
Indeed,  
for $u\in \mathcal U(\mathfrak g_{\mathbb C})$, and for  an integer $n\ge 1$,  we have the following:
$$
||l(\gamma)f||_{u, -n}=\sup_{x\in G} ||x||^n |u.f(\gamma^{-1}x)|=\sup_{x\in G} ||\gamma x||^n |u.f(x)|\le
||\gamma||^n ||f||_{u, -n}.
$$
This proves that $l(\gamma)$ is continuous. Next, we have 
\begin{align*}
||l'(\gamma)\alpha||_B &=\sup_{f\in B} \left|\alpha(l(\gamma^{-1})f)\right|=\sup_{f\in l(\gamma^{-1})B} \left|\alpha(f)\right|=||
\alpha||_{l(\gamma^{-1})B}.
\end{align*}
We remark that since $l(\gamma^{-1})$ is continuous, the set $l(\gamma^{-1})B$ is bounded.
This proves that $l'(\gamma)$ is also continuous.
\end{proof}

\medskip
\begin{Lem}\label{srs-16}
  Let  $\left(\mathcal S(G)'\right)^\Gamma$ be the space of all $\alpha\in \mathcal S(G)'$ such that $l'(\gamma)\alpha=\alpha$ for all
  $\gamma\in \Gamma$. Then,  $\left(\mathcal S(G)'\right)^\Gamma$ is a closed subrepresentation of $\mathcal S(G)$
  (where $G$ acts by right translations).
\end{Lem}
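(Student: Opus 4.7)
The plan is to verify the two defining conditions of a closed subrepresentation separately: that $\left(\mathcal S(G)'\right)^\Gamma$ is closed in $\mathcal S(G)'$, and that it is stable under the right regular action $r'$ of $G$. Both follow formally from the previous lemma together with the elementary fact that left and right translations commute.

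For closedness, the key observation is that one can write
$$
\left(\mathcal S(G)'\right)^\Gamma = \bigcap_{\gamma\in \Gamma} \ker\bigl(l'(\gamma) - \mathrm{id}\bigr).
$$
By Lemma \ref{srs-15}, each operator $l'(\gamma)$ is continuous on $\mathcal S(G)'$ in the strong topology, so each kernel $\ker(l'(\gamma) - \mathrm{id})$ is a closed subspace. An arbitrary intersection of closed subspaces is closed, which gives the conclusion.

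For $G$-invariance, I would use the fact that on $\mathcal S(G)$ the operators $l(\gamma)$ and $r(g)$ commute, since
$$
\bigl(l(\gamma) r(g) f\bigr)(x) = f(\gamma^{-1} x g) = \bigl(r(g) l(\gamma) f\bigr)(x).
$$
Dualizing this identity yields $l'(\gamma) r'(g) = r'(g) l'(\gamma)$ on $\mathcal S(G)'$. Consequently, if $\alpha \in \left(\mathcal S(G)'\right)^\Gamma$, then for every $\gamma \in \Gamma$ and $g \in G$ we have $l'(\gamma)\bigl(r'(g)\alpha\bigr) = r'(g)\bigl(l'(\gamma)\alpha\bigr) = r'(g)\alpha$, so that $r'(g)\alpha$ remains $\Gamma$-invariant.

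I do not anticipate any real obstacle here; the only bookkeeping point is that one should be slightly careful that the statement is about the strong dual $\mathcal S(G)'$ rather than $\mathcal S(G)$ (the statement as typed appears to contain a harmless typo), and that the continuity of $r'$ on $\mathcal S(G)'$ needed to even speak of a subrepresentation is part of the general framework recalled in Section \ref{src}.
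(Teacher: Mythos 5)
Your proof is correct and follows essentially the same route as the paper: closedness is deduced from the continuity of each $l'(\gamma)$ established in Lemma \ref{srs-15} (the paper phrases this with nets, you with the equivalent observation that $\bigcap_{\gamma\in\Gamma}\ker\bigl(l'(\gamma)-\mathrm{id}\bigr)$ is an intersection of closed subspaces). Your additional verification that $r'(g)$ preserves $\Gamma$--invariance, via the commutation of left and right translations, is a point the paper leaves implicit, and you are also right that the statement's ``subrepresentation of $\mathcal S(G)$'' should read $\mathcal S(G)'$.
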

\begin{proof} Indeed, if $(\alpha_\lambda)_{\lambda\in \Lambda}$
is a net in  
$\left(\mathcal S(G)'\right)^\Gamma$ which converges to $\alpha  \in \mathcal S(G)'$ i.e., the nets $||\alpha_\lambda-\alpha||_B$,
where $B\subset \mathcal S(G)$ is bounded, 
converge to zero.  Then, since for  $\gamma\in \Gamma$, the operator $l'(\gamma)$ is continuous,
we have that the net $l'(\gamma)\alpha_\lambda$ converges to $l'(\gamma)\alpha$.
This implies $l'(\gamma)\alpha=\alpha$. Hence, $\alpha\in \left(\mathcal S(G)'\right)^\Gamma$.
\end{proof}
\medskip

\begin{Prop}\label{srs-12} We maintain  Assumption \ref{paf-00}.  Then, the canonical map
  $\mathcal S(\Gamma \backslash G)'\longrightarrow \mathcal S(G)'$
  is a continuous embedding  with the image dense in the closed subrepresentation $\left(\mathcal S(G)'\right)^\Gamma$.
  The space $\mathcal A_{umg}(\Gamma \backslash G)$ gets 
identified with the Garding space of  the subrepresentation $\left(\mathcal S(G)'\right)^\Gamma$.
\end{Prop}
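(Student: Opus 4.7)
The plan has four stages: construct the map and check continuity and injectivity, verify that its image is $\Gamma$-invariant, identify the Garding space of $(\mathcal S(G)')^\Gamma$ with $\mathcal A_{umg}(\Gamma\backslash G)$, and deduce density via the Garding construction.

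\emph{Stage 1.} The canonical map is the transpose $P_\Gamma^{\ast}$ of $P_\Gamma:\mathcal S(G)\to\mathcal S(\Gamma\backslash G)$. Its continuity in the strong dual topologies is automatic since, by inequality (\ref{srs-14}), $P_\Gamma$ carries bounded sets of $\mathcal S(G)$ to bounded sets of $\mathcal S(\Gamma\backslash G)$. For injectivity of $P_\Gamma^{\ast}$ it is enough that $P_\Gamma$ be surjective. Choose a non-negative $\psi_0\in C_c^\infty(G)$ whose support meets every $\Gamma$-orbit; then $\Psi(g):=\sum_{\gamma\in\Gamma}\psi_0(\gamma g)$ is locally finite, $\Gamma$-invariant, smooth, and strictly positive, so $\psi:=\psi_0/\Psi\in C_c^\infty(G)$ satisfies $\sum_\gamma\psi(\gamma g)=1$. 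For $\varphi\in\mathcal S(\Gamma\backslash G)$ the product $\psi\cdot\varphi\in C_c^\infty(G)\subset\mathcal S(G)$ then satisfies $P_\Gamma(\psi\varphi)=\varphi$.

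\emph{Stage 2.} For $\alpha\in\mathcal S(\Gamma\backslash G)'$ and $\gamma\in\Gamma$, reindexing the sum gives $P_\Gamma\circ l(\gamma^{-1})=P_\Gamma$ on $\mathcal S(G)$, hence $(l'(\gamma)P_\Gamma^{\ast}\alpha)(f)=\alpha(P_\Gamma(l(\gamma^{-1})f))=\alpha(P_\Gamma f)=(P_\Gamma^{\ast}\alpha)(f)$, so $P_\Gamma^{\ast}(\mathcal S(\Gamma\backslash G)')\subset(\mathcal S(G)')^\Gamma$.

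\emph{Stage 3.} For ``$\subset$'': if $\alpha\in(\mathcal S(G)')^\Gamma$ and $\phi\in C_c^\infty(G)$, then $r'(\phi)\alpha$ is a Garding vector of $\mathcal S(G)'$, hence lies in $\mathcal A_{umg}(G)$ by Lemma \ref{src-4} applied with trivial $\Gamma$; since $l'$ and $r'$ commute, $r'(\phi)\alpha$ remains $\Gamma$-invariant, so it lies in $\mathcal A_{umg}(\Gamma\backslash G)$. For the reverse inclusion, fix $\varphi\in\mathcal A_{umg}(\Gamma\backslash G)$; it belongs to some $\mathcal A_{umg,n}(\Gamma\backslash G)$, a Fr\'echet representation of moderate growth by Lemma \ref{srs-24}. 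The Dixmier--Malliavin theorem then gives a finite decomposition $\varphi=\sum_i r'(\phi_i)\varphi_i$ with $\phi_i\in C_c^\infty(G)$ and $\varphi_i\in\mathcal A_{umg,n}(\Gamma\backslash G)$; each $\varphi_i$ embeds into $(\mathcal S(G)')^\Gamma$ as $\beta_{\varphi_i}$ by Lemma \ref{srs-28}, exhibiting $\varphi$ as a Garding vector of $(\mathcal S(G)')^\Gamma$.

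\emph{Stage 4.} By Lemma \ref{src-4}, $\mathcal A_{umg}(\Gamma\backslash G)$ is also the Garding space of $\mathcal S(\Gamma\backslash G)'$, and the unfolding $P_\Gamma^{\ast}(\beta_\varphi)(f)=\int_G\varphi(x)f(x)\,dx$ shows that the two Garding realizations coincide under $P_\Gamma^{\ast}$. Since the Garding space is dense in any continuous representation of $G$ on a complete locally convex space, the image of $P_\Gamma^{\ast}$ contains the Garding space of $(\mathcal S(G)')^\Gamma$, and is therefore dense in $(\mathcal S(G)')^\Gamma$.

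The main obstacle is the reverse inclusion in Stage 3: to apply Dixmier--Malliavin one must first place $\varphi$ inside a Fr\'echet subrepresentation of moderate growth, which is precisely the role played by the filtration $\{\mathcal A_{umg,n}(\Gamma\backslash G)\}$ and Lemma \ref{srs-24}.
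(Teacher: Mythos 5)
Your Stages 2--4 essentially reproduce the paper's argument (the paper gets the inclusion ``image of $\mathcal A_{umg}(\Gamma\backslash G)$ lies in the Garding space'' directly from Lemma \ref{src-4} plus functoriality of Garding vectors under the continuous $G$-map, rather than via Dixmier--Malliavin inside the filtration $\mathcal A_{umg,n}$, but both routes are sound). The genuine gap is in Stage 1, in your proof that $P_\Gamma$ is surjective. You ask for a non-negative $\psi_0\in C_c^\infty(G)$ whose support meets every $\Gamma$-orbit; since $\supp\psi_0$ is compact, the condition $\Gamma\cdot\supp\psi_0=G$ forces $\Gamma\backslash G$ to be compact. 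Under Assumption \ref{paf-00} the quotient $\Gamma\backslash G$ has finite volume but is in general \emph{not} compact (e.g.\ $\SL2(\mathbb Z)\backslash \SL2(\mathbb R)$), so no such $\psi_0$ exists and your partition of unity cannot be built inside $C_c^\infty(G)$. One cannot simply relax to a non-compactly supported cutoff either: for $\varphi\in\mathcal S(\Gamma\backslash G)$ the product $\psi\varphi$ must land in $\mathcal S(G)$, i.e.\ decay rapidly relative to $\Vert x\Vert$ in directions (along the horospheres near a cusp) where $\varphi$ is merely $\Gamma$-periodic and $\Vert x\Vert_{\Gamma\backslash G}$ stays bounded, and arranging a $\Gamma$-partition of unity with that kind of decay for all derivatives is exactly the content of Casselman's Theorem 2.2, which the paper cites at this point. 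This is precisely where Assumption \ref{paf-00} enters the paper's proof, and it is the one deep input you have replaced with an argument valid only in the cocompact case.

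A smaller remark: for the injectivity of the transpose you do not actually need surjectivity of $P_\Gamma$; a dense image suffices, since $\alpha\circ P_\Gamma=0$ together with continuity of $\alpha$ then forces $\alpha=0$. But density of the image would still require an argument (a controlled truncation in the Schwartz topology), and in any case the surjectivity is also implicitly used in your Stage 4 picture of the two Garding realizations, so the cleanest repair is to quote Casselman's epimorphism theorem as the paper does.
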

\begin{proof}  Since  Assumption \ref{paf-00} holds,
  the canonical map $\mathcal S(G)\longrightarrow \mathcal S(\Gamma \backslash G)$, given by 
$P_\Gamma(f)(x)=\sum_{\gamma\in \Gamma} f(\gamma x)$, is a continuous epimorphism (\cite{casselman-1}, Proposition 1.11, Theorem 2.2).  

Next, the map $\mathcal S(\Gamma \backslash G)'\longrightarrow \mathcal S(G)'$ is an embedding. It is also obvious that 
its image is contained in  $\left(\mathcal S(G)'\right)^\Gamma$.  Let us that it is continuous. Let $B\subset \mathcal S(G)$
be a bounded set.  Then, since $P_\Gamma$ is continuous, $P_\Gamma\left(B\right)\subset \mathcal S(\Gamma \backslash G)$ is a bounded set. 
Then, we have 
$$
||\alpha\circ P_\Gamma||_B=\sup_{f\in B} \left| \alpha\left(P_\Gamma(f)\right)\right| =||\alpha||_{P_\Gamma\left(B\right)}.
$$
This proves the continuity of the map.

The space $\mathcal A_{umg}(\Gamma \backslash G)$ is the Garding space of $\mathcal S(\Gamma \backslash G)'$.
Thus its image is contained in the Garding space 
of  the subrepresentation $\left(\mathcal S(G)'\right)^\Gamma$. But the Garding space of  $\left(\mathcal S(G)'\right)^\Gamma$ is
contained in the Garding space of
$\mathcal S(G)'$. This space is $\mathcal A_{umg}(G)$ (see Lemma \ref{src-4}). So let $\alpha$ belong to the Garding space of
$\left(\mathcal S(G)'\right)^\Gamma$.
Then, by what we have just said, 
$\alpha$ is represented by a function $\varphi\in  \mathcal A_{umg}(G)$:
$$
\alpha(f)=\int_G \varphi(x) f(x)dx, \ \ f\in \mathcal S(G).
$$
Since $\alpha$ is $\Gamma$--invariant, we have that $\varphi(\gamma x)=\varphi(x)$, $\gamma\in \Gamma$, $x\in G$. Now, 
$\varphi\in  \mathcal A_{umg}(\Gamma \backslash G)$. 

Finally, since  $\mathcal A_{umg}(\Gamma \backslash G)$  maps onto the Garding space of $\left(\mathcal S(G)'\right)^\Gamma$, 
the space 
$\mathcal S(\Gamma \backslash G)'\longrightarrow \mathcal S(G)'$ maps onto a dense sbspace of $\left(\mathcal S(G)'\right)^\Gamma$.
\end{proof}

\medskip
In the following proposition we give the most general construction of classical Poincar\' e series. In part, it generalizes
(\cite{MuicSMOOTH}, Theorem 6.4).

\begin{Prop}\label{srs-11} Assume that $\Gamma \subset G$ is a discrete subgroup. Let $\varphi\in L^1(G)$. Then
  the series $\sum_{\gamma \in \Gamma} l(\gamma)\varphi$ converges absolutely in $\mathcal S(G)'$ to an element of
  $\left(\mathcal S(G)'\right)^\Gamma$ (which is in the image of  $\mathcal S(\Gamma \backslash G)'$).
  Moreover, if $\varphi$ is a smooth vector in the Banach representation  $L^1(G)$ under right--translations, then
  $\sum_{\gamma \in \Gamma} l(\gamma)\varphi\in \mathcal A_{umg}(\Gamma \backslash G)$.
  \end{Prop}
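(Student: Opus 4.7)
The plan has three parts matching the three assertions in the statement. Throughout I identify $L^1(G)$ with a subspace of $\mathcal{S}(G)'$ via $\varphi\mapsto \alpha_\varphi$, where $\alpha_\varphi(f)=\int_G\varphi(x)f(x)dx$.

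For the absolute convergence, fix a bounded set $B\subset \mathcal{S}(G)$ and choose $d>0$ large enough that $\int_G||x||^{-d}dx<\infty$ (Lemma \ref{paf-0}), so that (\ref{srs-13}) provides the finite constant $M_d$. For $f\in B$ I use $|f(y)|\le ||f||_{1,-d}||y||^{-d}$; after the change of variables $y=\gamma^{-1}x$,
$$
|\alpha_{l(\gamma)\varphi}(f)|=\left|\int_G\varphi(y)f(\gamma y)dy\right|\le ||f||_{1,-d}\int_G|\varphi(y)|\,||\gamma y||^{-d}dy.
$$
Summing over $\Gamma$, interchanging sum and integral by Tonelli, and invoking (\ref{srs-13}) yields
$$
\sum_\gamma||\alpha_{l(\gamma)\varphi}||_B\le M_d||\varphi||_1\sup_{f\in B}||f||_{1,-d}<\infty,
$$
so the series converges absolutely in $\mathcal{S}(G)'$; call its sum $\alpha$. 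The $\Gamma$-invariance of $\alpha$ then follows from the continuity of $l'(\gamma_0)$ (Lemma \ref{srs-15}) and the reindexing $\gamma\mapsto \gamma_0\gamma$.

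To see that $\alpha$ lies in the image of $\mathcal{S}(\Gamma\backslash G)'$, set $\tilde\varphi(x)=\sum_{\gamma\in\Gamma}\varphi(\gamma x)$; by Tonelli, $\tilde\varphi\in L^1(\Gamma\backslash G)$ with $||\tilde\varphi||_1\le ||\varphi||_1$. Since every $F\in \mathcal{S}(\Gamma\backslash G)$ is bounded (taking $n=0$ in the defining seminorms), the formula $\tilde\alpha(F)=\int_{\Gamma\backslash G}\tilde\varphi(x)F(x)dx$ defines a continuous linear functional on $\mathcal{S}(\Gamma\backslash G)$. The unfolding identity $\int_G\varphi(y)P_\Gamma(f)(y)dy=\int_{\Gamma\backslash G}\tilde\varphi(y)P_\Gamma(f)(y)dy$, combined with the computation $\alpha(f)=\int_G\varphi(y)P_\Gamma(f)(y)dy$ obtained by Fubini in the first step, shows $\alpha=\tilde\alpha\circ P_\Gamma$.

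For the moderate growth claim, suppose now that $\varphi$ is a smooth vector for the right regular representation on $L^1(G)$. Dixmier--Malliavin, applied to this Banach representation, lets me write $\varphi=\sum_{i=1}^N r(\alpha_i)\psi_i$ with $\alpha_i\in C_c^\infty(G)$ and $\psi_i\in L^1(G)$. Periodization intertwines the right $G$-actions, so $\tilde\varphi=\sum_i r(\alpha_i)\tilde\psi_i$ with $\tilde\psi_i\in L^1(\Gamma\backslash G)$; each summand is (up to the involution $g\mapsto g^{-1}$ applied to $\alpha_i$) the convolution of an $L^1(\Gamma\backslash G)$-function with a $C_c^\infty(G)$-function, so Lemma \ref{src-5} identifies it almost everywhere with an element of $\mathcal{A}_{umg}(\Gamma\backslash G)$; summing over $i$ gives $\tilde\varphi\in\mathcal{A}_{umg}(\Gamma\backslash G)$. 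I expect the main obstacle to lie precisely in this last step: Dixmier--Malliavin must be invoked for a Banach (rather than Fr\'echet) representation, and the convolution appearing in $r(\alpha_i)\psi_i$ must be matched to the convention of Lemma \ref{src-5}; once those bookkeeping points are settled the remaining estimates are routine.
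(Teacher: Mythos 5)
Your proposal is correct and follows essentially the same route as the paper: the same $M_d$-estimate from (\ref{srs-13}) for absolute convergence, the same unfolding identification of the sum with integration against $P_\Gamma(\varphi)\in L^1(\Gamma\backslash G)$, and the same Dixmier--Malliavin decomposition combined with Lemma \ref{src-5} for the smooth-vector claim. The only (harmless) difference is that you spell out the continuity of $\tilde\alpha$ on $\mathcal S(\Gamma\backslash G)$ and the $\Gamma$-invariance via Lemma \ref{srs-15} more explicitly than the paper does.
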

\begin{proof} Let $B\subset \mathcal S(G)$ be a bounded set. We need to show that
  $$\sum_{\gamma \in \Gamma} ||l(\gamma)\varphi||_B<\infty.
  $$
  Since $\mathcal S(\Gamma \backslash G)'$ is complete, this proves the absolute convergence.
  
  By definition, we have
  \begin{align*}
  ||l(\gamma)\varphi||_B=\sup_{f\in B} \ \ \left|\int_G \varphi(\gamma^{-1} x) f(x)dx\right|
  & \le \sup_{f\in B} \int_G \left|\varphi(\gamma^{-1} x)\right| \left|f(x)\right| dx\\
  &=\sup_{f\in B}
  \int_G \left|\varphi(x)\right| \left|f(\gamma x)\right| dx\\
  &\le \left(\sup_{f\in B} ||f||_{1, -d}\right)\cdot \int_G \left|\varphi(x)\right| \left|\left|\gamma x\right|\right|^{-d} dx
  \end{align*}
  So,  the series is
  $$
  \le \left(\sup_{f\in B} ||f||_{1, -d}\right)\cdot 
  M_d  \int_G \left|\varphi(x)\right| dx <\infty,
  $$
  where the number $M_d$ is defined by (\ref{srs-13}).

The distribution in question is in fact the integration against the classical Poincar\' e series
$P_\Gamma(\varphi)\in L^1(\Gamma\backslash G)$:
\begin{align*}
\int_G P_\Gamma(\varphi)(x)f(x)dx&=\sum_{\gamma\in \Gamma} \int_G \varphi(\gamma x) f(x) dx\\
&=
\sum_{\gamma\in \Gamma} \int_G \varphi(\gamma^{-1} x) f(x) dx= \sum_{\gamma\in \Gamma} 
\int_G l(\gamma)\varphi(x) f(x) dx, 
\end{align*}
for  $f\in \mathcal S(G)$.

The space of smooth vectors in $L^1(G)$, where $G$ acts by right translations $r$, is a Frech\' et space under seminorms
(\cite{W2}, Lemma 11.5.1):
$$
||r(u)f||_1=\int_G \left|r(u)f(x)\right| dx, \ \ u\in \mathcal U(\mathfrak g_{\mathbb C}).
$$
Then, by Diximier--Malliavin theorem \cite{dixmal}, for smooth vector $\varphi$ there exists, smooth vectors $\varphi, \ldots, \varphi_l$, and
$\alpha_1, \ldots, \alpha_l\in C_c^\infty (G)$ such that
$$
\varphi=\sum_{i=1}^l r(\alpha_i)\varphi_i=\sum_{i=1}^l \varphi_i\star \alpha^\vee_i
$$
where as usual $\alpha^\vee_i(x)=\alpha_i(x^{-1})$. By the standard measure--theoretic arguments, we have
$$
P_\Gamma(\varphi)=\sum_{i=1}^l P_\Gamma(\varphi_i)\star \alpha^\vee_i.
$$
Now, we apply Lemma \ref{src-5}.
\end{proof}


\begin{thebibliography}{999999}

  \bibitem{arthur}
    {\sc J.~Arthur,} {\em The endoscopic classification of representations. Orthogonal and symplectic groups.}
      American Mathematical Society Colloquium Publications, 61. American Mathematical Society, Providence,
      RI, 2013. xviii+590 pp.


\bibitem{arthur-1}
  {\sc J.~Arthur,} {\em An introduction to the trace formula. Harmonic analysis, the trace formula, and Shimura varieties,} 1--263,
  Clay Math. Proc., 4, Amer. Math. Soc., Providence, RI, 2005.
      
\bibitem{Borel1960}
 {\sc A.~Borel,} {\em Density properties for certain subgroups of semi-simple groups without
  compact components,} Ann. of Math. {\bf (2) 72} (1960), 179--188.
  


\bibitem{Borel1966}
{\sc A.~Borel,} {\em Introduction to automorphic forms 
(Proc. Sympos. Pure Math., Oregon State Univ., Corvallis, Ore., 1977),
Part 1,} Proc. Sympos. Pure Math. {\bf  IX},  Amer. Math. Soc., Providence, R.I.
(1966), 199--210.

\bibitem{Borel-SL(2)-book}{\sc A.~Borel,} {\em 
Automorphic forms on $SL_2 (\mathbb R)$ (Cambridge Tracts in Mathematics),}
Cambridge University Press (1997).
 

\bibitem{bb}{\sc W.~L.~Baily,  A.~Borel,} {\em Compactification of
    arithmetic quotients of bounded symmetric domains,} Annals of
    Math., Vol. {\bf 84} (1966), 442--528.


\bibitem{BJ}
{\sc A.~Borel, H.~Jacquet,} {\em Automorphic forms and automorphic representations
(Proc. Sympos. Pure Math., Oregon State Univ., Corvallis, Ore., 1977),
Part 1,} Proc. Sympos. Pure Math. {\bf  XXXIII},  Amer. Math. Soc., Providence, R.I.
(1979), 189--202.

\bibitem{casselman}{W.~Casselman,}
  {\em  Canonical extensions of Harish-Chandra modules to
    representations of G. Canad. J. Math. 41 (1989), no. 3,
    385–-438.}
  
\bibitem{casselman-1}{W.~Casselman,}
  {\em  Introduction to the Schwartz space of $\Gamma \backslash G$.}
  Canad. J. Math. {\bf 41} (1989), no. 2, 285–-320.

\bibitem{clozel} {\sc L.~Clozel,}  {\em On limit multiplicities of discrete series representations in
  spaces of automorphic forms.} Invent. Math. {\bf 83} (1986), no. 2, 265–284.

\bibitem{cogdell}{\sc J.~W.~Cogdell,} {\em Lectures on $L$--functions, converse theorems, and functoriality for
  $GL_n$. Lectures on automorphic $L$--functions,} 1–96, Fields Inst. Monogr., {\bf 20},
  Amer. Math. Soc., Providence, RI, 2004.

  
\bibitem{dixmal} {\sc J.~Dixmier, P.~Malliavin,} {\em 
Factorisations de fonctions et de vecteurs ind\' efiniment diff\' erentiables,}
  Bull. Sci. Math. {\bf (2) 102} (1978), no. 4, 307–-330.
  



\bibitem{hc}{\sc Harish--Chandra,} {\em Discrete series for semisimple
    Lie groups II,} Acta Math. {\bf 116} (1966), 1-111.



\bibitem{langlands} {\sc R.~P.~Langlands,} {\em On the notion of an automorphic representation. A supplement to the preceding paper,
  (Proc. Sympos. Pure Math., Oregon State Univ., Corvallis, Ore., 1977),
Part 1,} Proc. Sympos. Pure Math. {\bf  XXXIII},  Amer. Math. Soc., Providence, R.I.
(1979), 203--209.


\bibitem{lapid}{\sc E.~M.~Lapid,} {\em A remark on Eisenstein series. Eisenstein series and applications,} 239--249, Progr. Math., {\bf 258},
  Birkh\" auser Boston, Boston, MA, 2008.


\bibitem{milicic}
{\sc D.~Mili\v ci\' c,} {\em Asymptotic behavior of matrix coefficients of
  the discrete series,} Duke Math. J {\bf 44} (1977), 59--88.



\bibitem{ms1}{\sc  S.~Miller, W.~Schmid,} {\em On the rapid decay of cuspidal automorphic forms,}
  Adv. Math. {\bf 231} (2012), no. 2, 940–-964.



\bibitem{MuicMathAnn} {\sc G.~Mui\' c,} {\em
On a Construction of Certain Classes of Cuspidal Automorphic Forms via Poincare Series,} Math. Ann. {\bf 343}, No.1 (2009), 
207--227.



\bibitem{MuicComp} {\sc G.~Mui\' c,} {\em Spectral Decomposition of Compactly Supported Poincar\' e
Series and Existence of Cusp Forms,}  Compositio Math. {\bf 146, No. 1} (2010) 1-20.

  \bibitem{MuicJFA} {\sc G.~Mui\' c,} {\em Fourier coefficients of automorphic forms and integrable discrete series,}
    J. Funct. Anal. {\bf 270} (2016), no. 10, 3639--3674.

  \bibitem{MuicSMOOTH} {\sc G.~Mui\' c,} {\em   Smooth cuspidal automorphic forms and integrable discrete series,}
    preprint (arXiv:1610.05483v2).

    \bibitem{savin}
  {\sc G.~Savin,} {\em Limit multiplicities of cusp forms.} Invent. Math. {\bf 95} (1989), no. 1, 149–159.

\bibitem{W0} {\sc N.~R.~Wallach,} {\em On the constant term of a square integrable automorphic form. Operator algebras and
  group representations, Vol. II (Neptun, 1980),} 227--237, Monogr. Stud. Math., {\bf 18}, Pitman, Boston, MA, 1984. 
  
\bibitem{W1} {\sc N.~R.~Wallach,} {\em Real reductive groups I,}  
 Academic Press, Boston, 1988. 

\bibitem{W2} {\sc N.~R.~Wallach,} {\em Real reductive groups II,}
 Academic Press, Boston, 1992.


\bibitem{W3} {\sc N.~R.~Wallach,} {\em A lecture delievered at W. Schmid's birthday conference (2013),}
  http://www.math.harvard.edu/conferences/schmid\_2013/docs/wallach.pdf.

 
\end{thebibliography}
\end{document}